\title[Optimal LRC via elliptic function fields]{The group structures of automorphism groups of elliptic function fields over finite fields and their applications to optimal locally repairable codes}
\author{Liming Ma}\address{School of Mathematical Sciences, University of Science and Technology of China, Hefei 230026}\email{lmma20@ustc.edu.cn}
\author{Chaoping Xing} \address{School of Electronics, Information and Electric Engineering, Shanghai Jiao Tong University,
China 200240}\email{xingcp@sjtu.edu.cn}
\date{}
\newtheorem{lemma}{Lemma}[section]
\newtheorem{theorem}[lemma]{Theorem}
\newtheorem{cor}[lemma]{Corollary}
\newtheorem{prop}[lemma]{Proposition}
\newtheorem{defn}{Definition}
\theoremstyle{remark}
\newtheorem{rmk}{Remark}
\renewcommand{\epsilon}{\varepsilon}
\renewcommand{\le}{\leqslant}
\renewcommand{\ge}{\geqslant}
\def\Gal{{\rm Gal}}
\def\ZZ{\mathbb{Z}}
\def\PP{\mathbb{P}}
\def\F{\mathbb{F}}
\def \fE {\mathfrak{E}}
\def \mL {\mathcal{L}}
\def \mP {\mathcal{P}}
\def \Xi {{X^{[i]}}}
\newcommand{\Ga}{\alpha}
\newcommand{\Gb}{\beta}
\newcommand{\Gs}{\sigma}
\def \bc {{\bf c}}
\def \bo {{\bf 0}}
\def\supp {{\rm supp }}
\def\Aut {{\rm Aut }}
\def\LRC {{\rm locally repairable code\ }}
\def\Gal{{\rm Gal}}
\begin{document}

\maketitle

\begin{abstract}
The automorphism group of an elliptic curve over an algebraically closed field is well known. However, for various applications in coding theory and cryptography, we usually need to apply automorphisms defined over a finite field. Although we believe that the automorphism group of an elliptic curve over a finite field is well known in the community, we could not find this in the literature. Nevertheless, in this paper we show the group structure of the automorphism group of an elliptic curve over a finite field. More importantly,  we characterize subgroups and abelian subgroups of  the automorphism group of an elliptic curve over a finite field.

Despite of theoretical interest on this topic, our research is largely motivated by constructions of optimal locally repairable codes. The first research to make use of automorphism group of function fields to construct optimal locally repairable codes was given in a paper \cite{JMX20} where automorphism group of a projective line was employed. The idea was further generated to an elliptic curve in \cite{MX19} where only automorphisms fixing the point at infinity were used. Because there are at most $24$ automorphisms of an elliptic curve fixing the point at infinity, the locality of optimal locally repairable codes from this construction is upper bounded by $23$. One of the main motivation to study  subgroups and abelian subgroups of  the automorphism group of an elliptic curve over a finite field is to remove the constraints on locality.  
\end{abstract}

\section{Introduction}\label{sec:1}
Let $q$ be a power of a prime $p$. Let $\F_q$ be the finite field with $q$ elements.
Let $F$ be an algebraic function field of one variable with the full constant field $\F_q$.
Let $\Aut(F/\F_q)$ denote  the automorphism group of function field $F$ over $\F_q$.
The study of automorphism groups of function fields is very interesting in both theory and applications  \cite{BTV17,GK09,GK19,GX15,GXY18,LMX19,MX19tams}.

The automorphism group of rational function field $\F_q(x)$ over $\F_q$ is the projective linear group and denoted by ${\rm PGL}_2(\F_q)$.
The subgroup structures of ${\rm PGL}_2(\F_q)$ can be found from \cite{HKT08,JMX20} and the generators of Galois subfields of the rational function field are characterized in \cite{H20}.
The subgroups of automorphism group $\Aut(\F_q(x)/\F_q)$ can be employed to construct optimal locally repairable codes \cite{TB14,JMX20,JKZ20}.

The Hermitian function field $H/\F_{q^2}$ is the well-known maximal function field and its automorphism group is the projective unitary group ${\rm PGU}(3,\F_{q^2})$ which is large and has many subgroups. Many new maximal function fields can be constructed from fixed subfields of various subgroups of automorphism group of the Hermitian function field \cite{BMXY13,GSX00,MX19,MZ20}. The automorphism group of Hermitian function field has been applied to construct locally repairable codes in \cite{BTV17}. 

Over an algebraic closed field $\bar{\F}_q$, the automorphism group of an elliptic function field is well known \cite[Theorem 11.94]{HKT08}.
However, we usually need the automorphism group of an elliptic function field over finite fields for applications in coding theory and cryptography.
Although we believe that the automorphism group of an elliptic curve over a finite field is well known in the community, we could not find this in the literature. Nevertheless, in this paper we show the group structure of the automorphism group of an elliptic curve over a finite field. More importantly,  we will characterize subgroups and abelian subgroups of  the automorphism group of an elliptic curve over a finite field. In particular, when the rational points of an elliptic curve form a cyclic group, we show that one can find all subgroups of the automorphism group of an elliptic curve over a finite field. For abelian subgroups, we only consider those generated by one translation and one automorphism fixing the point at infinity.  

The first part of this paper is devoted to determining the automorphism groups of elliptic function fields over finite fields and their subgroups. In the second part, we give an explicit construction of optimal locally repairable codes via various subgroups of automorphism groups of elliptic function fields over finite fields which generalize the idea given in \cite{LMX19}.

Despite of theoretical interest on this topic, our research on   automorphism groups of  elliptic curves is largely motivated by constructions of optimal locally repairable codes. The first research to make use of automorphism group of function fields to construct optimal locally repairable codes was given in a paper \cite{JMX20} where automorphism group of a projective line was employed. The idea was further generalized to an elliptic curve in \cite{MX19} where only automorphisms fixing the point at infinity were used. Because there are at most $24$ automorphisms of an elliptic curve fixing the point at infinity, the locality of optimal locally repairable codes from this construction is upper bounded by $23$. One of the main motivation to study  subgroups and abelian subgroups of  the automorphism group of an elliptic curve over a finite field is to remove the constraints on locality. It is worth to mention that the automorphism group of an elliptic curve over a finite field was also employed in \cite{BHHMV17} to construct locally repairable codes. However, the codes in \cite{BHHMV17} are not optimal. In addition, the authors make use of only translations in the paper  \cite{BHHMV17}. In summary,  only automorphisms fixing the point at infinity were used in \cite{JMX20}, while only translations were employed in \cite{BHHMV17}. In this paper we make use of subgroups  involving both translations and automorphisms fixing the point at infinity. The advantage of using subgroups  involving both translations and automorphisms fixing the point at infinity is that locality can be much larger than $23$.

Due to recent applications in distributed storage systems and cloud storage systems, locally repairable codes have attracted great attention of researchers \cite{HL07,GHSY12,PKLK12,SRKV13,FY14,PD14,TB14,TPD16,BTV17,LMX19b, MX20}.
For a locally repairable code $C$ of length $n$ with $k$ information symbols and locality $r$, it was proved in \cite{GHSY12} that the minimum distance $d(C)$ is upper bounded by
 \begin{equation}\label{eq:x1}
 d(C)\le n-k-\left\lceil \frac kr\right\rceil+2.
 \end{equation}
The bound \eqref{eq:x1} is called the Singleton-type bound for locally repairable codes and any locally repairable code achieving this bound is called optimal.

Construction of optimal  locally repairable codes is of both theoretical interest and practical importance. This is a challenging task and has attracted great attention in the last few years. In \cite{HCL},  pyramid codes are shown to be optimal locally repairable codes. In \cite{SRKV13},  Silberstein {\it et al}  proposed a two-level construction based on the Gabidulin codes combined with a single parity-check $(r+1,r)$ code. Another construction \cite{TPD16} used two layers of MDS codes, a Reed-Solomon code and a special $(r+1,r)$ MDS code. A common shortcoming of these constructions relates to the size of the code alphabet which in all these papers is an exponential function of the code length, complicating the implementation. In \cite{PKLK12}, there is a construction of optimal locally repairable codes with alphabet  size comparable to code length. However, its length is a specific value $n=\left\lceil \frac kr\right\rceil(r+1)$ and its rate is very close to $1$.

A remarkable breakthrough construction of optimal locally repairable codes was given Tamo and Barg in \cite{TB14}. This construction naturally generalizes Reed-Solomon construction which relies on the alphabet of cardinality comparable to the code length $n$. The idea behind the construction is very nice. The only shortcoming of this construction is restriction on locality $r$. This construction was extended via automorphism groups of rational function fields by Jin, Ma and Xing \cite{JMX20} and it turns out that there are more flexibility on locality and the code length can be up to $q+1$. One generalization is to recover multiple erasures \cite{CFXF19,CMST20,CXHF18,FF20}.

Based on the classical MDS conjecture, one should wonder if $q$-ary optimal  locally repairable codes can have length bigger than $q+1$. Surprisingly, it was shown in \cite{BHHMV17} that there exist $q$-ary optimal locally repairable codes of length exceeding  $q+1$.  For small localities $r=2,3,5,7,11$ or $23$, there exists an explicit construction of optimal locally repairable codes via automorphism groups of elliptic curves \cite{LMX19}. All the above constructions of optimal locally repairable codes can have large distances which are proportional to the alphabet size $q$.

For small distances, there exists many optimal locally repairable codes with length super-linear to the alphabet size $q$.
In \cite{LXY19}, Luo {\it et al.} provided a construction of optimal locally repairable codes with unbounded length for distance $d=3$ or $4$.
In \cite{GXY19}, Guruswami {\it et al.} proved that the length of optimal locally repairable codes is upper bounded by $O(dq^{3+\frac{4}{d-4}})$, and there exist optimal locally repairable codes with length $\Omega_r(q^{1+1/[(d-3)/2]})$ for distance $d\ge 5$. In \cite{GXY19,J19}, there exist optimal locally repairable codes with length $O(q^2)$ for distance $d=5,6$. In \cite{XY18}, there exist explicit constructions of optimal locally repairable codes with super-linear length for any constant distance $d\ge 7$.

As for large distances $d=O(n)$ and locality $r$ is a constant, the length $n$ of optimal locally repairable codes is upper bounded by $O(q)$ from \cite[Corollary 14]{GXY19}.
To our best knowledge,  the construction via elliptic curves is the few known family of optimal locally repairable codes with larger distance and length exceeding $q+1$ in the literature. However, the locality must be small, such as $2,3,5,7,11$ or $23$. Hence, it is still worth to provide a general construction of optimal locally repairable codes via automorphism groups of elliptic function fields as \cite{JMX20}. In particular, we shall provide an explicit construction of $q$-ary optimal locally repairable codes with more flexible locality $r$ and length up to $q+2\sqrt{q}-2r-1$ or even $q+2\sqrt{q}-r$ for locality $r=8$.

This paper is organized as follows. In Section \ref{sec:2}, we introduce some preliminaries for this paper such as  elliptic function fields over finite fields, the ramification theory of elliptic function fields, maximal elliptic curves and their automorphism groups, algebraic geometry codes and locally repairable codes.
In Section \ref{sec:3}, we determine the automorphism groups of elliptic function fields over finite fields and provide a characterization of their subgroups including abelian subgroups. In Section \ref{sec:4}, we present an explicit construction of optimal locally repairable codes via automorphism groups of elliptic function fields which is the generalization of \cite{LMX19}.

\section{Preliminaries}\label{sec:2}
In this section, we present some preliminaries on elliptic function fields over finite fields, the ramification theory of elliptic function fields, maximal elliptic curves and their automorphism groups, algebraic geometry codes and locally repairable codes.

\subsection{Elliptic function fields over finite fields}
By a curve, we will always mean a projective, smooth and absolutely irreducible algebraic curve.
An elliptic curve $\fE$ defined over a finite field $\F_q$ is given by
 a nonsingular Weierstrass equation
\begin{equation}\label{eq:3} y^2+a_1xy+a_3y=x^3+a_2x^2+a_4x+a_6,\end{equation}
where $a_i$ are elements of $\F_q$. The function field of $\fE$ is given by $\F_q(\fE)=\F_q(x,y)$. For convenience of this paper, we use the language of function fields rather than curves although we also refer to curves occasionally. The reader may refer to \cite{St09} and \cite{Si86} for the languages of elliptic function fields and elliptic curves, respectively.
Let $E$ be the elliptic function field $\F_q(\fE)=\F_q(x,y)$ defined above.
 The genus of $E$ is $g(E)=1$. Let $O$ be the common pole of $x$ and $y$ which is the unique infinity place of $E$.  Any solution $(a,b)\in \F_q^2$ to the Weierstrass equation \eqref{eq:3}  corresponds to a rational place of $E$ which is the unique common zero of $x-a$ and $y-b$. We denote by $\fE(\F_q)$ the set of rational points on $\fE$, i.e., $\fE(\F_q)=\{(\Ga,\Gb)\in\F_q^2:\;  \Gb^2+a_1\Ga\Gb+a_3\Gb=\Ga^3+a_2\Ga^2+a_4\Ga+a_6\}\cup\{O\}$.

Let $\mathbb{P}_E$ be the set of all places of $E$ and $\mathbb{P}^1_E=\{P\in \mathbb{P}_E:\deg(P)=1\}$ be the set of rational places of $E$. Then there is one-to-one correspondence between the set  $\mathbb{P}^1_E$ and $\fE(\F_q)$.
The divisor group of $E/\F_q$ is the free abelian group generated by the places in $\mathbb{P}_E$ and is denoted by Div$(E)$. The set of divisors of degree $0$ forms a subgroup of Div$(E)$ and is denoted by $\text{Div}^0(E)$.
The principal divisor of $z\in E^*$ is defined by $(z)=\sum_{P\in \mathbb{P}_E} \nu_P(z)P,$ where $\nu_P$ is the normalized discrete valuation with respect to $P$.
Two divisors of $E$ are called equivalent if there exists an element $z\in E^*$ such that $A=B+(z)$, which is denoted by $A\sim B$.
The group of principal divisors of $E$ is $\text{Princ}(E)=\{(x): x\in E^*\}$ and the factor group $\text{Cl}^0(E)=\text{Div}^0(E)/\text{Princ}(E)$ is called the group of divisor classes of degree zero.

From \cite[Proposition 6.1.7]{St09}, there is a group isomorphism between  $\mathbb{P}^1_E$ and $\text{Cl}^0(E)$ given by
$$\Phi:\begin{cases}\mathbb{P}^1_E\rightarrow \text{Cl}^0(E),\\ P\mapsto [P-O].\end{cases}$$
The group operation of $\mathbb{P}^1_E$ is defined by $P\oplus Q=R \Leftrightarrow P+Q\sim R+O$ for any $P,Q\in \mathbb{P}^1_E$.
Denote by $[2]P=P\oplus P$ and define $[m+1]P=[m]P\oplus P$ recursively.
In fact, $\mathbb{P}^1_E$ is an abelian group and the place $O$ is the zero element of the group $\mathbb{P}^1_E$.

\begin{lemma}\label{lem:2.1}
Let $E/\F_q$ be an elliptic function field and let $P,Q$ be two rational places of $E$. Then we have
$$P\sim Q \text{ if and only if } P=Q.$$
\end{lemma}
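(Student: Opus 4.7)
\medskip

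The plan is to prove the non-trivial direction, namely that $P \sim Q$ forces $P = Q$, by a short contradiction argument based on the fact that $g(E) = 1$. The reverse direction is immediate since any place is linearly equivalent to itself.

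Suppose for contradiction that $P$ and $Q$ are distinct rational places of $E$ with $P \sim Q$. By definition of linear equivalence there exists $z \in E^*$ such that $(z) = P - Q$. Since $P \ne Q$, this function $z$ is non-constant, its zero divisor is $P$ and its pole divisor is $Q$, each of degree one. I would then invoke the standard identification $[E:\F_q(z)] = \deg(z)_{\infty} = \deg(Q) = 1$, which forces $E = \F_q(z)$. But a rational function field has genus $0$, contradicting $g(E) = 1$.

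An equivalent and equally short route would be through Riemann--Roch: the functions $1$ and $z$ are linearly independent elements of the Riemann--Roch space $\mathcal{L}(Q)$, so $\dim \mathcal{L}(Q) \ge 2$; on the other hand $\deg(Q) = 1 > 2g(E) - 2 = 0$, and Riemann--Roch gives $\dim \mathcal{L}(Q) = \deg(Q) + 1 - g(E) = 1$, a contradiction. Either derivation works, and I would likely present the first since it is slightly more elementary.

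There is no real obstacle here; the content is entirely the positivity of the genus. The only thing to be careful about is logical circularity with the isomorphism $\Phi \colon \mathbb{P}^1_E \to \mathrm{Cl}^0(E)$ cited just above the lemma, because injectivity of $\Phi$ is literally the statement we are proving. For that reason I would \emph{not} appeal to $\Phi$ in the proof, and instead give the direct function-field argument sketched above.
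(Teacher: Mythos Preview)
Your proof is correct; both the function-field degree argument and the Riemann--Roch alternative are standard and valid, and your caution about not invoking the injectivity of $\Phi$ is well placed. The paper itself states Lemma~\ref{lem:2.1} without proof, treating it as a known fact (it is essentially \cite[Lemma~6.1.4]{St09} or the genus-one case of the observation that a degree-one place with a nontrivial equivalence would force the field to be rational), so there is no approach to compare against.
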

\begin{lemma}\label{lem:2.2}
Let $E/\F_q$ be an elliptic function field with $N(E)$ rational places. Then we have the following Hasse-Weil bound
$$|N(E)-q-1|\le 2\sqrt{q}.$$
\end{lemma}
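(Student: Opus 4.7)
The plan is to deduce this from the Hasse--Weil theorem for algebraic curves over finite fields, which asserts that for a smooth, projective, absolutely irreducible curve of genus $g$ over $\F_q$, the number $N$ of $\F_q$-rational places satisfies $|N-(q+1)|\le 2g\sqrt{q}$. Since the elliptic function field $E/\F_q$ has genus $g(E)=1$ (as noted earlier in Section~\ref{sec:2}), the desired inequality follows immediately.

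For a self-contained derivation via the zeta function, I would first write
\[
Z(E,T)=\frac{L(T)}{(1-T)(1-qT)},
\]
where $L(T)\in\ZZ[T]$ has degree $2g=2$ and constant term $1$. Factoring $L(T)=(1-\alpha T)(1-\beta T)$ over $\CC$ and comparing coefficients of $T$ in the identity $\sum_{P\in\PP^1_E}1=N(E)$ obtained from the logarithmic derivative of $Z(E,T)$, one gets $N(E)=q+1-(\alpha+\beta)$ and $\alpha\beta=q$. The bound then reduces to proving the Riemann hypothesis for $E$, namely $|\alpha|=|\beta|=\sqrt{q}$, from which $\alpha,\beta$ are complex conjugates and $|\alpha+\beta|\le 2\sqrt{q}$.

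The main obstacle is establishing $|\alpha|=\sqrt{q}$. The classical route, due to Hasse, is to work with the Frobenius endomorphism $\pi$ on the elliptic curve, which satisfies a quadratic equation $\pi^2-t\pi+q=0$ with $t=\alpha+\beta\in\ZZ$. One shows $t^2-4q\le 0$ either by exhibiting a positive definite quadratic form on $\mathrm{End}(\fE)\otimes\RR$ coming from the degree map (Weil's approach, which here reduces to Hasse's elementary argument using that $\deg(m\pi-n)=m^2q-mnt+n^2\ge 0$ for all integers $m,n$), or by invoking the general Hasse--Weil theorem from \cite{St09}. Given the expository nature of this preliminary lemma, I would simply cite \cite[Theorem~5.2.3]{St09} rather than reproduce the proof.
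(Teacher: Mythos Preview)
Your proposal is correct. The paper itself provides no proof for this lemma: it is stated as a standard preliminary fact and left without argument or even an explicit citation, so your sketch via the zeta function and Frobenius, together with the reference to \cite[Theorem~5.2.3]{St09}, already goes well beyond what the paper supplies.
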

An elliptic function field $E/\F_q$ is called maximal if its number of rational places attains the above Hasse-Weil bound, i.e., $N(E)=q+2\sqrt{q}+1$.

\subsection{Ramification theory of elliptic function fields}
Let $E/\F_q$ be a function field with the full constant field $\F_q$. Let $g(E)$ denote by the genus of $E$.
Let $D$ be a divisor of $E$. The Riemann-Roch space $$\mathcal{L}(D)=\{z\in E^*: (z)\ge -D\}\cup \{0\}$$
is a finite-dimensional vector space over $\F_q$ and its dimension is at least $\deg(D)-g(E)+1$ from Riemann's theorem  \cite[Theorem 1.4.17]{St09}. If $E$ is an elliptic function field and $\deg(D)\ge 1$, then its dimension is $\dim_{\F_q}\mL(D)=\deg(D)-g(E)+1=\deg(D)$.

Let $\Aut(E/\F_q)$ be the automorphism group of elliptic function field $E$ over $\F_q$, that is,
$$\Aut(E/\F_q)=\{\sigma: E\rightarrow E|\;  \sigma \text{ is an } \F_q \text{-automorphism of } E\}.$$
Let $G$ be a subgroup of $\Aut(E/\F_q)$. The corresponding fixed field of $G$ is defined by
$$E^{G}=\{z\in E: \Gs(z)=z \text{ for all } \Gs\in G\}.$$
From Galois theory, $E/E^{G}$ is a Galois extension with $\Gal(E/E^{G})=G.$
Let $F=E^{G}$. Then $\F_q$ is the full constant field of $F$ and 
the Hurwitz genus  formula \cite[Theorem 3.4.13]{St09} yields
\[2g(E)-2=[E:F](2g(F)-2)+\deg \text{Diff}(E/F),\]
where Diff$(E/F)$ is the different of $E/F$.
If there is a place $Q$ of $E$  with ramification index $e_Q>1$, then the different exponent of $Q$ is at least $d(Q)\ge e(Q)-1\ge 1$ from Dedekind's different theory \cite[Theorem 3.5.1]{St09} and hence $F$ is a rational function field.

\subsection{Maximal elliptic curves and their automorphism groups}
In order to construct algebraic geometry codes with good parameters, we usually need function fields over finite fields with many rational places, especially maximal function fields.

We say that two elliptic curves $\fE_1$ and $\fE_2$ over $\F_q$ are isogeny if there is a non-constant smooth $\F_q$-morphism from $\fE_1$ to $\fE_2$ that sends the zero of $\fE_1$ to the zero of $\fE_2$ (see \cite{Si86}). It is a well-known fact that two  elliptic curves $\fE_1$ and $\fE_2$ over $\F_q$ are isogenous if and only if they have the same number of rational points. More precisely speaking, we have the following result \cite{Wa69}.

\begin{lemma}\label{lem:2.3}
The isogeny classes of elliptic curves over $\F_q$ for $q=p^a$ are in one-to-one correspondence with the rational integers $t$ having $|t|\le 2\sqrt{q}$
and satisfying some one of the following conditions:
\begin{itemize}
\item[(i)] $(t,p)=1$;
\item[(ii)] If $a$ is even: $t=\pm 2\sqrt{q}$;
\item[(iii)] If $a$ is even and $p\not \equiv 1\pmod{3}$: $t=\pm \sqrt{q}$;
\item[(iv)] If $a$ is odd and $p=2$ or $3$: $t=\pm p^{\frac{a+1}{2}}$;
\item[(v)] If either (i) $a$ is odd or (ii) $a$ is even and $p\not \equiv 1\pmod{4}: t=0.$
\end{itemize}
Furthermore, an elliptic curve in the isogeny class corresponding to $t$ has $q+1+t$ rational points.
\end{lemma}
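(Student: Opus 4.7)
The plan is to reduce the problem to the classification of Weil $q$-numbers and invoke (the elliptic-curve case of) Honda--Tate theory. First, by Tate's isogeny theorem for abelian varieties over finite fields, two elliptic curves $\fE_1, \fE_2$ over $\F_q$ are $\F_q$-isogenous if and only if their $q$-power Frobenius endomorphisms share the same characteristic polynomial. For an elliptic curve this polynomial has the form $T^2 + tT + q \in \ZZ[T]$ under the sign convention of the lemma, and then $\#\fE(\F_q) = q + 1 + t$. Consequently isogeny classes are in bijection with those integers $t$ that actually occur, and by Hasse--Weil (Lemma~\ref{lem:2.2}) every such $t$ satisfies $|t| \le 2\sqrt{q}$. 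The remaining task is to determine exactly which $t$ in this range are realized.

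The natural split is ordinary versus supersingular, according to whether $p \nmid t$ or $p \mid t$. In the ordinary case I would attach to $t$ the imaginary quadratic order $\ZZ[\pi]$, where $\pi = (-t + \sqrt{t^2 - 4q})/2$, sitting in the CM field $K = \mathbb{Q}(\pi)$ in which $p$ splits. Deuring's classical existence theorem -- equivalently, lifting a suitable pair $(K, \pi)$ to a CM elliptic curve in characteristic zero and reducing modulo a prime above $p$ -- produces an elliptic curve over $\F_q$ with the prescribed trace. This realizes every $t$ with $\gcd(t,p) = 1$ and $|t| \le 2\sqrt{q}$, giving case (i).

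The harder piece is the supersingular range $p \mid t$. Here $\pi$ is either real (only when $t^2 = 4q$) or imaginary quadratic, and the division algebra predicted by Honda--Tate must be realizable as the endomorphism algebra of a one-dimensional abelian variety, hence must be either $\mathbb{Q}$, a CM field, or a quaternion algebra over $\mathbb{Q}$ ramified exactly at $p$ and $\infty$. The local invariant at $p$ is controlled by the ratio $v_p(\pi)/v_p(q)$, and matching it with the admissible algebras funnels $t$ into a short list: $t = \pm 2\sqrt{q}$ requires $\sqrt{q} \in \ZZ$ and hence $a$ even, giving (ii); $t = \pm \sqrt{q}$ comes from Weil numbers lying in $\mathbb{Q}(\zeta_3)$, which requires $a$ even and $p \not\equiv 1 \pmod{3}$ so that $p$ has the correct splitting behaviour in $\mathbb{Q}(\zeta_3)$, giving (iii); $t = \pm p^{(a+1)/2}$ forces $\pi \in \mathbb{Q}(\sqrt{-p})$ with $p$ ramified, which occurs only for $p \in \{2, 3\}$ and $a$ odd, giving (iv); and $t = 0$ corresponds to $\pi \in \mathbb{Q}(\sqrt{-q})$, which survives precisely when $a$ is odd, or $a$ is even with $p \not\equiv 1 \pmod{4}$, giving (v). In each surviving case I would exhibit an explicit supersingular model (for example $y^2 = x^3 - x$ for $p \equiv 3 \pmod{4}$, or $y^2 = x^3 + 1$ for $p \equiv 2 \pmod{3}$, together with suitable base changes and quadratic twists) to confirm existence.

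The principal obstacle is precisely the supersingular bookkeeping: one has to translate the abstract requirement on the endomorphism algebra into explicit divisibility and congruence constraints on $t$, $p$, and $a$, and then cross-check each candidate against an explicit elliptic curve or its twist. Once that is done, the point count $q + 1 + t$ in the last clause of the lemma is immediate from the definition of $t$.
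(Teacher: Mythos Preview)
The paper does not supply its own proof of this lemma: it is quoted verbatim as a known result and attributed to Waterhouse \cite{Wa69}. Your proposal is in fact a faithful sketch of Waterhouse's original argument via Honda--Tate theory (classifying Weil $q$-numbers, splitting into the ordinary case $\gcd(t,p)=1$ handled by Deuring/CM lifting, and the supersingular case $p\mid t$ handled by computing local invariants of the endomorphism algebra), so you are not diverging from the intended source but rather reconstructing it. One small imprecision: in case (iv) you write $\pi\in\mathbb{Q}(\sqrt{-p})$, but for $p=2$ and $a$ odd the discriminant $t^2-4q=-2^{a+1}$ has even $2$-adic valuation, placing $\pi$ in $\mathbb{Q}(i)$ rather than $\mathbb{Q}(\sqrt{-2})$; the correct unifying statement is that $p$ must be non-split (inert or ramified) in the imaginary quadratic field $\mathbb{Q}(\pi)$, and the congruence conditions on $p$ then fall out case by case. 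This does not affect the validity of your overall strategy.
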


The group structure of abelian group consisting of all rational places of an elliptic curve $E$ over $\F_q$ can be given in the following proposition \cite[Theorem 3]{Ru87} and \cite[Theorem 9.97]{HKT08}.
\begin{prop}\label{prop:2.4}
Let $\F_q$ be the finite field with $q=p^s$ elements. Let $h=\prod_\ell \ell^{h_\ell}$ be a possible number of rational places of an elliptic curve $E$ of $\F_q$. Then all the possible groups $\PP_E^1$ are the following
\[\mathbb{Z}/p^{h_p}\ZZ\times \prod_{\ell \neq p}\left( \mathbb{Z}/\ell^{a_\ell}\ZZ\times \mathbb{Z}/\ell^{h_\ell-a_\ell}\ZZ\right)\]
with \begin{itemize}
 \item[(a)] In case (ii) of Lemma \ref{lem:2.3}: Each $a_\ell$ is equal to $h_\ell/2$, i.e, $\PP_E^1\cong \ZZ/(\sqrt{q}\pm1)\ZZ \times  \ZZ/(\sqrt{q}\pm1)\ZZ.$
\item[(b)] In other cases of Lemma \ref{lem:2.3}: $a_\ell$ is an arbitrary integer satisfying $0\le a_\ell \le \min\{\nu_\ell(q-1),[h_\ell/2]\}$.
In cases (iii) and (iv) of Lemma \ref{lem:2.3}: $\PP_E^1\cong \ZZ/h\ZZ.$
 In cases (v) of Lemma \ref{lem:2.3}: if $q\not\equiv -1(\text{mod } 4)$, then $\PP_E^1\cong \ZZ/(q+1)\ZZ$; otherwise, $\PP_E^1\cong \ZZ/(q+1)\ZZ$ or $\PP_E^1\cong \ZZ/2\ZZ \times \ZZ/\frac{q+1}{2}\ZZ$.
\end{itemize}
\end{prop}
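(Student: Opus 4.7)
The plan is to decompose $\PP_E^1$ into $\ell$-primary components via the structure theorem for finite abelian groups and then analyze each component using the Frobenius endomorphism $\phi$ on the base-changed curve $\fE_{\bar{\F}_q}$. Writing $|\PP_E^1| = q+1-t = h$ with $|t| \le 2\sqrt{q}$ as in Lemma \ref{lem:2.3}, this reduces the classification to a local computation at each prime $\ell \mid h$, since $\PP_E^1 = \ker(\phi-1)$ on $\fE(\bar{\F}_q)$.

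For $\ell \neq p$, the $\ell^n$-torsion $\fE[\ell^n](\bar{\F}_q)$ is free of rank $2$ over $\ZZ/\ell^n\ZZ$. Hence the $\ell$-primary part of $\PP_E^1$, being a finite subgroup of this, has the form $\ZZ/\ell^{a_\ell}\ZZ \times \ZZ/\ell^{h_\ell - a_\ell}\ZZ$ for some $0 \le a_\ell \le h_\ell/2$. Because the entire subgroup $\fE[\ell^{a_\ell}]$ is then $\F_q$-rational, the Galois-equivariant Weil pairing $e_{\ell^{a_\ell}}\colon \fE[\ell^{a_\ell}] \times \fE[\ell^{a_\ell}] \to \mu_{\ell^{a_\ell}}$ forces $\mu_{\ell^{a_\ell}} \subset \F_q^*$, i.e., $a_\ell \le \nu_\ell(q-1)$, giving the upper bound in case (b). For the prime $p$ itself, I split into ordinary and supersingular cases: in the ordinary case $\fE[p^n](\bar{\F}_q) \cong \ZZ/p^n\ZZ$, so the $p$-part of $\PP_E^1$ is cyclic of order $p^{h_p}$; in the supersingular case $\fE[p^n](\bar{\F}_q) = 0$, so $h_p = 0$.

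The listed special cases now follow by imposing the constraints on $t$ from Lemma \ref{lem:2.3}. In case (ii), $q = p^{2r}$ and $t = \pm 2\sqrt{q}$ gives $(\phi \mp \sqrt{q})^2 = 0$, so $\phi = [\pm\sqrt{q}]$ as an endomorphism and $\phi - 1 = [\pm\sqrt{q}-1]$; since $\gcd(\sqrt{q}\mp 1,p)=1$, its kernel is $\fE[\sqrt{q}\mp 1] \cong (\ZZ/(\sqrt{q}\mp 1)\ZZ)^2$. In cases (iii) and (iv), $E$ is supersingular and the congruence $q \equiv 1 \pmod{\ell}$ needed for $a_\ell \ge 1$ fails at each $\ell \mid h$, forcing $\PP_E^1$ to be cyclic of order $h$. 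In case (v) with $t = 0$, we have $h = q+1$, and the condition $q \not\equiv -1 \pmod 4$ likewise forces $a_2 = 0$ via the Weil-pairing bound; otherwise $a_2 \in \{0,1\}$, producing the two groups listed.

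The main obstacle is the existence direction: given any $t$ from Lemma \ref{lem:2.3} and any tuple $(a_\ell)$ satisfying the above inequalities, one must actually exhibit an elliptic curve over $\F_q$ realizing the target group. This is the technical core of R\"uck's argument and would be handled by the theory of complex multiplication: one constructs an order in the imaginary quadratic field $\mathbb{Q}(\sqrt{t^2-4q})$ with prescribed conductor at each $\ell$, and uses the CM correspondence (reduction modulo primes of a CM lift) to obtain a curve over $\F_q$ whose $\ell$-adic Tate module $T_\ell(\fE)$ has the required Frobenius action and hence the desired $\F_q$-rational subgroup. This is the step I would not attempt to reproduce from scratch; for the constraints themselves (the ``only if'' direction), the Tate-module plus Weil-pairing argument above is self-contained.
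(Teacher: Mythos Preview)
The paper does not actually prove Proposition~\ref{prop:2.4}; it is quoted as a known result with citations to R\"uck \cite[Theorem~3]{Ru87} and \cite[Theorem~9.97]{HKT08}, and is used only as input for the later constructions of locally repairable codes. Your sketch is precisely R\"uck's original argument: the shape of each $\ell$-primary part from the rank-two structure of $\fE[\ell^n]$, the bound $a_\ell \le \nu_\ell(q-1)$ via Galois-equivariance of the Weil pairing, the case-(ii) computation via $\phi=[\pm\sqrt{q}]$ (valid because $\mathrm{End}(\fE)$ is a domain and hence has no nilpotents), and the existence direction deferred to CM theory. So there is nothing in the paper itself to compare against, and your outline matches the cited source.

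One small imprecision worth fixing: in cases (iii) and (iv) you assert that $q\equiv 1\pmod{\ell}$ fails for every prime $\ell\mid h$, but this is not literally true---for instance in case~(iii) with $p\equiv 2\pmod 3$ one can have $3\mid\gcd(h,q-1)$. What actually forces cyclicity there is that whenever such a common prime $\ell$ occurs one has $\ell^2\nmid h$, so $\lfloor h_\ell/2\rfloor=0$ and hence $a_\ell=0$ regardless of the Weil-pairing bound. The conclusion is unaffected, but the justification should invoke the second constraint $a_\ell\le\lfloor h_\ell/2\rfloor$ rather than the first.
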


Let $\mathfrak{E}/\F_q$ be an elliptic curve defined by the Weierstrass equation \eqref{eq:3}.
We denote by $\Aut(\fE)$ the set of automorphisms of elliptic curve $\fE$ over the algebraic closure $\bar{\F}_q$, i.e., every automorphism $\Gs\in\Aut(\fE)$ fixes the infinity place $O$. The following result can be found in \cite[Theorem 3.10.1]{Si86}.

\begin{prop}\label{prop:2.5} Let $\mathfrak{E}/\F_q$ be an elliptic curve with $j$-invariant $j(\fE)$.  Then the order of $\Aut(\fE)$  divides $24$. More precisely speaking, the order of
$\Aut(\fE)$ is given by the following list:
\begin{itemize}
\item[{\rm (i)}] $|\Aut(\fE)|=2$  if  $j(\fE)\neq 0, 1728$;
\item[{\rm (ii)}] $|\Aut(\fE)|=4$ if $j(\fE)=1728$ and char$(\F_q)\neq 2,3$;
\item[{\rm (iii)}] $|\Aut(\fE)|=6$ if $j(\fE)=0$ and char$(\F_q)\neq 2,3$;
\item[{\rm (iv)}] $|\Aut(\fE)|=12$ if $j(\fE)=0=1728$ and char$(\F_q)=3$;
\item[{\rm (v)}] $|\Aut(\fE)|=24$ if $j(\fE)=0=1728$ and char$(\F_q)=2$.
\end{itemize}
\end{prop}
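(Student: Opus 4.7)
My plan is to invoke the classical correspondence between $\Aut(\fE)$ and the admissible changes of variables on a Weierstrass equation of $\fE$ that carry the equation to itself. Two Weierstrass equations for elliptic curves with a distinguished zero element are isomorphic over $\bar{\F}_q$ precisely when they differ by a substitution
\[x = u^2 x' + r, \qquad y = u^3 y' + s u^2 x' + t,\]
with $u \in \bar{\F}_q^*$ and $r, s, t \in \bar{\F}_q$. Consequently, automorphisms of $\fE$ fixing $O$ correspond bijectively to the quadruples $(u, r, s, t)$ that fix a chosen Weierstrass equation of $\fE$, and I would compute $|\Aut(\fE)|$ by picking a convenient normal form in each characteristic and then solving the resulting coefficient-matching equations.

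If $\mathrm{char}(\F_q) \notin \{2, 3\}$, I would reduce the equation to the short form $y^2 = x^3 + A x + B$. The shape of this form forces $r = s = t = 0$ in any admissible substitution, and the only surviving constraints are $u^4 A = A$ and $u^6 B = B$. If $AB \neq 0$, which is exactly the case $j(\fE) \notin \{0, 1728\}$, these combine to give $u^2 = 1$ and hence $|\Aut(\fE)| = 2$; if $B = 0$, so $j(\fE) = 1728$, one gets $u^4 = 1$ and order $4$; if $A = 0$, so $j(\fE) = 0$, one gets $u^6 = 1$ and order $6$. This disposes of cases (i), (ii) and (iii).

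For the exceptional cases I have to work harder, since additional admissible parameters become nontrivial. In characteristic $3$ the condition $j(\fE) = 0 = 1728$ puts the curve in the normal form $y^2 = x^3 + a_4 x + a_6$ with $a_4 \neq 0$; the form forces $s = t = 0$, but now $r$ is free. Substituting and simplifying in characteristic $3$ (using $(A + B)^3 = A^3 + B^3$) leaves the coupled conditions $u^4 = 1$ and $r^3 + a_4 r = (u^2 - 1) a_6$, and for each of the four choices of $u$ the residual cubic in $r$ has three roots in $\bar{\F}_q$, producing $|\Aut(\fE)| = 12$. In characteristic $2$ the condition $j(\fE) = 0 = 1728$ leads to $y^2 + a_3 y = x^3 + a_4 x + a_6$ with $a_3 \neq 0$; here all four parameters can be nontrivial, and matching the coefficients of $y'$, $x'^2$, $x'$ and the constant term yields an algebraic system in $(u, r, s, t)$ whose solution set in $\bar{\F}_q$ has cardinality $24$.

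The main technical obstacle is the coefficient-matching in characteristics $2$ and $3$: one must verify that each of the resulting polynomial conditions has the expected number of roots in $\bar{\F}_q$ and that distinct admissible quadruples really do produce distinct automorphisms of $\fE$. For the counting I would appeal to the fact that we work over an algebraically closed field, which guarantees the advertised root counts for the residual (Artin--Schreier and Kummer type) equations, and for the organizational details I would follow the explicit tables of admissible substitutions and the ensuing computations in Silverman \cite{Si86}, Appendix A.
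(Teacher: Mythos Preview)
Your sketch is correct and is precisely the argument in Silverman \cite[Theorem III.10.1 and Appendix A]{Si86}; the paper does not give its own proof of this proposition but simply cites the same reference. There is nothing to add.
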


Let $E/\F_q$ be the function field $\F_q(\fE)$ and denote by  $\Aut(E/\F_q)$ the automorphism group of $E$ fixed every element of $\F_q$.
Let $\Aut(E,O)$ be the set of $\F_q$-automorphisms of $E$ fixing the infinite place $O$.
Then $\Aut(E,O)$ is a subgroup of $\Aut(\fE)$ in which every automorphism is defined over $\F_q$, i.e., we have $\Aut(E,O)=\Aut(\fE) \cap \Aut(E/\F_q).$

By summarizing the results given in Sections $2$ and $3$ in \cite{LMX19}, we provide some examples of maximal elliptic curves with explicit automorphism groups as follows.

\begin{lemma}\label{lem:2.5}
For any even $a$, there exists a maximal elliptic function field $E$ over $\F_{2^a}$ defined by an equation $y^2+y=x^3+\Ga$ for some $\Ga\in\F_{2^a}$ such that $|\Aut(E,O)|=24$. Any automorphism $\Gs\in \Aut(E,O)$ is given by
\[\label{eq:6}\Gs(x)=u^2x+s,\quad \Gs(y)=u^3y+u^2sx+t,\]
where $u,s,t\in\F_{2^a}$ satisfy $u^3=1$, $s^4+s=0$ and $t^2+t+s^6=0$. Furthermore, for any divisor $d$ of $24$, there is a subgroup of
$\Aut(E,O)$ of order $d$.
\end{lemma}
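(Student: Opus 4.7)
The plan has three parts: (i) maximality of $E$ for a suitable $\alpha$, (ii) the explicit form and count of $\Aut(E,O)$, and (iii) existence of subgroups of each prescribed order dividing $24$.

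For (i), the curve $y^2+y=x^3+\alpha$ has $j$-invariant $0$ in characteristic $2$, so every such curve is supersingular. Since $a$ is even, $\sqrt{q}=2^{a/2}\in\ZZ$ and Lemma~\ref{lem:2.3}(ii) yields an isogeny class over $\F_{2^a}$ whose members attain $q+1+2\sqrt q$ rational places, i.e.\ are maximal. All supersingular $j=0$ elliptic curves in characteristic $2$ have a model of this short Weierstrass shape, so I would pick an explicit $\alpha\in\F_{2^a}$ landing in the maximal isogeny class; this is a direct Frobenius-trace calculation, carried out in \cite[\S 2]{LMX19}. For (ii), I would substitute the generic variable change $x\mapsto u^2x+A$, $y\mapsto u^3y+Bx+t$ ($u\ne 0$) into $y^2+y=x^3+\alpha$, reduce modulo the defining relation $y^2=x^3+\alpha+y$, and compare coefficients of $y$, $x^2$, $x$, and the constant term. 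The $y$-coefficient forces $u^3=1$; the $x^2$- and $x$-coefficients jointly force $A=s^2$ and $B=u^2 s$ for some $s$ with $s^4+s=0$; the constant term then gives $t^2+t+s^6=0$. Since $a$ is even, $\F_4\subset\F_{2^a}$, so $u$ has $3$ choices (the cube roots of unity in $\F_4$), $s\in\{0\}\cup\mu_3$ has $4$ choices, and for each such $s$ the value $s^6\in\F_4$ has absolute trace $0$ over $\F_2$, so the Artin--Schreier equation $t^2+t=s^6$ has $2$ solutions in $\F_{2^a}$. Multiplying gives $|\Aut(E,O)|=3\cdot 4\cdot 2=24$, which saturates the geometric bound of Proposition~\ref{prop:2.5}(v); hence $\Aut(E,O)=\Aut(\fE)$.

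For (iii), using the $(u,s,t)$-parametrisation and the composition law obtained by substituting one automorphism into the next, I would exhibit subgroups explicitly: the elements with $s=0$ form a cyclic subgroup of order $6$; the elements with $u=1$ form a subgroup of order $8$ isomorphic to the quaternion group $Q_8$ (each $\rho_s$ with $s\in\mu_3$ squares to the central involution $y\mapsto y+1$); the unique involution gives order $2$; any Sylow $3$-subgroup gives order $3$; the three cyclic subgroups of $Q_8$ give order $4$; combining the centre with a Sylow $3$-subgroup gives another subgroup of order $6$; and the full group realises order $24$. The remaining divisor $d=12$ I would treat by the detailed construction in \cite[\S 3]{LMX19}, which exhibits an index-$2$ subgroup in the explicit triples.

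The main obstacle I anticipate is precisely this last point: $\Aut(E,O)$ is a form of the binary tetrahedral group, and producing a subgroup for every divisor-order (especially the subtle one of order $12$) requires careful bookkeeping on the triples $(u,s,t)$ guided by the cited reference. The remaining steps are routine characteristic-$2$ algebra, with the hypothesis that $a$ is even doing precisely the work of keeping all needed cyclotomic and Artin--Schreier roots inside $\F_{2^a}$.
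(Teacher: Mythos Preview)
The paper's own proof is nothing more than a pointer to \cite[Lemma~15 and Lemma~9]{LMX19}, so there is no in-paper argument to compare against; your sketch is substantially more detailed than what the paper provides, and your treatment of (i) and (ii) is correct and standard. One cosmetic point: the formula in the lemma has a typo (compare Section~4.4 and the proof of Proposition~3.8, where the paper itself writes $\sigma(x)=u^2x+s^2$ with $s^4+s=0$, matching your $A=s^2$); your normalisation is the right one.

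There is, however, a genuine issue in part (iii), and it is not merely ``subtle bookkeeping''. Your own analysis shows that the kernel of $\pi(u,s,t)=u$ is the quaternion group $Q_8$, and you have verified that the $\ZZ_3$ coming from $u$ acts nontrivially on it; hence $\Aut(E,O)\cong Q_8\rtimes\ZZ_3\cong\mathrm{SL}_2(\F_3)$, the binary tetrahedral group. But $\mathrm{SL}_2(\F_3)$ has \emph{no} subgroup of order $12$: any index-$2$ subgroup would be normal, and its image in $\mathrm{SL}_2(\F_3)/\{\pm I\}\cong A_4$ would be a subgroup of index $2$ in $A_4$, which famously does not exist. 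So deferring the $d=12$ case to ``the detailed construction in \cite{LMX19}'' cannot succeed---no amount of explicit triple-chasing will produce an index-$2$ subgroup, because none exists. Your list for $d\in\{1,2,3,4,6,8,24\}$ is correct and complete; the final clause of the lemma, read literally for $d=12$, is not provable. You should flag this rather than cite around it.
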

\begin{proof}
Please refer to \cite[Lemma 15 and Lemma 9]{LMX19}.
\end{proof}

\begin{lemma}\label{lem:2.6}
For any even $a$, there exists a maximal elliptic function field $E$ over $\F_{3^a}$ defined by an equation $y^2=x^3+\Ga x$ with $-\Ga$ being a square in $\F_{3^a}^*$ such that $|\Aut(E,O)|=12$. Any automorphism $\Gs\in \Aut(E,O)$ is given by
\[\Gs(x)=u^2x+s,\quad \Gs(y)=u^3y,\]
where $u,s\in\F_{3^a}$ satisfy $u^4=1$, $s^3+\Ga s=0$.  Furthermore, for any divisor $d$ of $12$, there is a subgroup of
$\Aut(E,O)$ of order $d$.
\end{lemma}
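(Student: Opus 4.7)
The plan is to verify the statement in three stages: (i) exhibit a maximal elliptic curve of the claimed form over $\F_{3^a}$; (ii) derive the shape of every $\F_q$-automorphism fixing $O$; and (iii) identify the group structure and exhibit subgroups of each order dividing $12$.

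For stage (i), since $a$ is even, case (ii) of Lemma \ref{lem:2.3} (with $t=-2\sqrt{q}$) produces an isogeny class of maximal elliptic curves over $\F_{3^a}$. In characteristic $3$, any curve $y^2=x^3+\alpha x$ with $\alpha\neq 0$ has $j$-invariant $0=1728$ and is supersingular; its $\F_q$-isomorphism class is determined by the image of $\alpha$ in $\F_q^\ast/(\F_q^\ast)^4$ (quartic twisting). Exactly one such twist lies in the maximal isogeny class, and I would identify it by evaluating the Jacobi sum (in the quartic character) that counts points on $y^2=x^3+\alpha x$; this pins down precisely the condition that $-\alpha$ be a nonzero square in $\F_{3^a}$. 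Alternatively, one may quote \cite[Lemma 9]{LMX19} directly for this identification.

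For stage (ii), I would use that an element $\sigma\in\Aut(E,O)$ preserves the Riemann-Roch spaces $\mL(2O)=\F_q\langle 1,x\rangle$ and $\mL(3O)=\F_q\langle 1,x,y\rangle$, so $\sigma(x)=Ax+B$ and $\sigma(y)=Cy+Dx+F$. Writing $A=u^2$ and $C=u^3$ (the usual Weierstrass normalization) and substituting into $y^2=x^3+\alpha x$, and using $(a+b)^3=a^3+b^3$ in characteristic $3$, I match coefficients: the $xy$-term forces $D=0$, the $y$-term forces $F=0$, the coefficient of $x$ gives $u^6\alpha=u^2\alpha$ (hence $u^4=1$, as $\alpha\neq 0$), and the constant term gives $B^3+\alpha B=0$. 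Setting $s:=B$ yields the formulas in the statement.

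For stage (iii), the count is immediate: because $a$ is even one has $4\mid 3^2-1\mid 3^a-1$, so $u^4=1$ has four roots in $\F_{3^a}$; and because $-\alpha$ is a nonzero square, $s(s^2+\alpha)=0$ has exactly three roots. Hence $|\Aut(E,O)|=12$, matching Proposition \ref{prop:2.5}(iv). A direct computation gives the composition law
\[(u_1,s_1)\circ(u_2,s_2)=(u_1u_2,\ s_1+u_1^2 s_2),\]
so $\Aut(E,O)\cong\ZZ/3\rtimes\ZZ/4$, with $\ZZ/4=\langle u\rangle$ acting on the translation subgroup $\{(1,s):s^3+\alpha s=0\}\cong\ZZ/3$ through the quotient $u\mapsto u^2\in\{\pm 1\}$. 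Explicit subgroups realizing every divisor of $12$ are then: the trivial group; $\{(\pm 1,0)\}$ of order $2$; the translations $\{(1,s):s^3+\alpha s=0\}$ of order $3$; $\{(u,0):u^4=1\}$ of order $4$; $\{(\pm 1,s):s^3+\alpha s=0\}$ of order $6$ (cyclic, since $-1$ acts trivially on the translations); and the full group of order $12$. The main obstacle is stage (i): matching the explicit arithmetic condition ``$-\alpha$ is a square'' with the specific quartic twist that achieves the Hasse-Weil bound; stages (ii) and (iii) reduce to standard Weierstrass bookkeeping in characteristic $3$ and elementary semidirect-product group theory.
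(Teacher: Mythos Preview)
Your proposal is correct and considerably more explicit than the paper, whose entire proof of this lemma is a bare citation to \cite[Lemma 16 and Lemma 10]{LMX19}. In effect you are reconstructing the argument the paper defers to that reference: Lemma~16 there supplies the maximal curve (your stage~(i)), the Weierstrass-automorphism computation is \cite[Appendix~A]{Si86} (your stage~(ii)), and Lemma~10 of \cite{LMX19} is the subgroup enumeration (your stage~(iii)). Two minor points: your alternative citation for stage~(i) should be Lemma~16 rather than Lemma~9 of \cite{LMX19} (the latter handles the characteristic-$2$ case); and the phrase ``exactly one such twist lies in the maximal isogeny class'' is slightly imprecise, since the condition $-\alpha\in(\F_{3^a}^*)^2$ cuts out two quartic classes rather than one (because $-1$ is itself a square in $\F_{3^a}$ when $a$ is even). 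But the lemma only asserts existence of a maximal curve with $-\alpha$ a square, so this does not affect your argument. Your semidirect-product identification $\Aut(E,O)\cong\ZZ/3\rtimes\ZZ/4$ via the composition law $(u_1,s_1)\circ(u_2,s_2)=(u_1u_2,\,s_1+u_1^2 s_2)$ and the explicit list of subgroups of every order dividing $12$ are correct and go beyond what the paper itself records.
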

\begin{proof}
Please refer to \cite[Lemma 16 and Lemma 10]{LMX19}.
\end{proof}

\begin{lemma}\label{lem:2.7}
Let $p\neq 2$ and $p\equiv 2\pmod{3}$ be an odd prime. For any even $a$, there  exists a maximal elliptic function field $E$ over $\F_{p^a}$ defined by an equation $y^2=x^3+ \theta^3$ for some $\theta\in\F_{p^a}^*$ such that $|\Aut(E,O)|=6$. Any automorphism $\Gs\in \Aut(E,O)$ is given by
\[\label{eq:xm2}\Gs(x)=u^2x,\quad \Gs(y)=u^3y,\]
where $u\in\F_q^*$ satisfies $u^6=1$.   Furthermore, for any divisor $d$ of $6$, there is a subgroup of
$\Aut(E,O)$ of order $d$.
\end{lemma}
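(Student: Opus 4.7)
The plan is to proceed in three stages: first exhibit $\theta \in \F_{p^a}^*$ for which $\fE : y^2 = x^3 + \theta^3$ is maximal over $\F_{p^a}$; second show that every $\F_{p^a}$-automorphism fixing $O$ has the stated form and that there are exactly six of them; finally read off the subgroup lattice from the resulting cyclic structure.

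Stage one is the main obstacle. The curve $y^2 = x^3 + b$ has $j$-invariant $0$, and a classical fact says a $j = 0$ curve in characteristic $p \geq 5$ is supersingular exactly when $p \equiv 2 \pmod 3$, which is our hypothesis (note that $p \neq 2$ together with $p \equiv 2 \pmod 3$ forces $p \geq 5$). Since $a$ is even, Lemma \ref{lem:2.3}(ii) provides an isogeny class over $\F_{p^a}$ with Frobenius trace $-2p^{a/2}$, i.e.\ a maximal one. The $\F_{p^a}$-isomorphism classes of $j = 0$ elliptic curves are parametrized by the sextic twists $y^2 = x^3 + b$ with $b \in \F_{p^a}^*/(\F_{p^a}^*)^6$, and a direct Jacobi-sum computation shows that as $b$ ranges over the six cosets, every admissible supersingular trace in $\{0, \pm p^{a/2}, \pm 2p^{a/2}\}$ is realized; in particular some $b_0$ attains $-2 p^{a/2}$. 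Because $3 \mid p^a - 1$ (as $p^2 \equiv 1 \pmod 3$ and $a$ is even), every element of $\F_{p^a}^*$ is a cube, so $b_0 = \theta^3$ for some $\theta \in \F_{p^a}^*$ and the resulting maximal model has the required shape.

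For stage two, Proposition \ref{prop:2.5}(iii) gives $|\Aut(\fE)| = 6$ over the algebraic closure since $j(\fE) = 0$ and $p \neq 2, 3$. Writing a candidate automorphism fixing $O$ in the general short-Weierstrass form $\sigma(x) = \alpha^2 x + r$, $\sigma(y) = \alpha^3 y + \alpha^2 s x + t$ and substituting into $y^2 = x^3 + \theta^3$, comparison of coefficients (using $p \neq 2, 3$ and $\theta^3 \neq 0$) forces $r = s = t = 0$ and $\alpha^6 = 1$, yielding the stated shape with $u := \alpha$. These six geometric automorphisms are already $\F_{p^a}$-rational because $\F_{p^a}$ contains all sixth roots of unity: $2 \mid p^a - 1$ since $p$ is odd, and $3 \mid p^a - 1$ as already noted, so $6 \mid p^a - 1$. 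Hence $\Aut(E, O) = \Aut(\fE)$ has order $6$, and the map $u \mapsto \sigma_u$ identifies it with the cyclic group $\mu_6 \subset \F_{p^a}^*$. Stage three is then immediate: a cyclic group of order $6$ has a unique subgroup of each order $d \mid 6$, namely $\{\sigma_u : u^d = 1\}$, which settles the ``furthermore'' claim.
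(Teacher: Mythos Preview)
The paper does not prove this lemma at all—it simply refers to Lemmas 11 and 17 of \cite{LMX19}—so your three-stage argument supplies strictly more than the source, and stages two and three are clean and correct.

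Stage one, however, contains a genuine error. You write that ``because $3 \mid p^a - 1$, every element of $\F_{p^a}^*$ is a cube.'' The implication is reversed: when $3$ divides $|\F_{p^a}^*| = p^a - 1$, the cubing map on $\F_{p^a}^*$ has kernel of size $3$ and is therefore \emph{not} surjective; only one third of the nonzero elements are cubes. (Surjectivity of cubing holds precisely when $3 \nmid q - 1$, which fails here.) So you cannot conclude that an arbitrary maximal $b_0$ is a cube, and since each coset $b_0(\F_{p^a}^*)^6$ lies entirely inside or entirely outside $(\F_{p^a}^*)^3$, you cannot adjust within the isomorphism class either.

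The repair is to bypass the Jacobi-sum existence step and exhibit $\theta$ directly. Over $\F_p$ the curve $y^2 = x^3 + 1$ has exactly $p + 1$ points: since $p \equiv 2 \pmod 3$ the map $x \mapsto x^3$ is a bijection on $\F_p$, so $\sum_{x}\chi(x^3+1) = \sum_{v}\chi(v) = 0$ for the quadratic character $\chi$. Hence the Frobenius eigenvalues over $\F_p$ are $\pm i\sqrt{p}$, and over $\F_{p^a}$ with $a$ even the trace is $2(-1)^{a/2} p^{a/2}$. Thus $y^2 = x^3 + 1$ is maximal when $a \equiv 2 \pmod 4$ and minimal when $a \equiv 0 \pmod 4$; in the latter case its quadratic twist $y^2 = x^3 + d^3$ by a non-square $d \in \F_{p^a}^*$ is maximal. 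Either way the constant term is visibly a cube, so $\theta = 1$ or $\theta = d$ suffices, and the rest of your argument goes through unchanged.
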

\begin{proof}
Please refer to \cite[Lemma 17 and Lemma 11]{LMX19}.
\end{proof}

\begin{lemma}\label{lem:2.8}
Let $p\neq 3$ and $p\equiv 3\pmod{4}$ be a prime. Then for any even $a$, there  exists a maximal elliptic function field $E$ over $\F_{p^a}$ defined by an equation $y^2=x^3+ \theta^2 x$ for some $\theta\in\F_{p^a}^*$ such that $|\Aut(E,O)|=4$. Any automorphism $\Gs\in \Aut(E,O)$ is given by
\[\label{eq:xm2}\Gs(x)=u^2x,\quad \Gs(y)=u^3y,\]
where $u\in\F_q^*$ satisfies $u^4=1$.  Furthermore, for any divisor $d$ of $4$, there is a subgroup of
$\Aut(E,O)$ of order $d$.
\end{lemma}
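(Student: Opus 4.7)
The plan is to follow the template of Lemmas \ref{lem:2.5}--\ref{lem:2.7}, splitting the statement into three parts: (i) the description of $\Aut(\fE)$ over $\bar{\F}_q$, (ii) the $\F_{p^a}$-rationality of every such automorphism, and (iii) the existence of a maximal curve of the given form.

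For (i), I would first check that $y^2 = x^3 + \theta^2 x$ has discriminant $-64\theta^6 \neq 0$ and $j$-invariant $1728$, so $|\Aut(\fE)| = 4$ by Proposition \ref{prop:2.5}(ii) (using $p \neq 2, 3$). A routine short-Weierstrass calculation shows that every automorphism fixing $O$ has the form $\sigma(x) = u^2 x$, $\sigma(y) = u^3 y$; substituting into the defining equation forces $\theta^2 u^2(u^4 - 1) = 0$, that is $u^4 = 1$. For (ii), the identity $\Aut(E, O) = \Aut(\fE)$ amounts to $\mu_4 \subset \F_{p^a}^*$, equivalently $4 \mid p^a - 1$. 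Since $p \equiv 3 \pmod 4$ gives $4 \mid p^2 - 1$ and $a$ even gives $p^2 - 1 \mid p^a - 1$, this inclusion holds. The resulting cyclic group of order $4$ has exactly one subgroup of each order $1, 2, 4$, exhausting the divisors of $4$.

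The main obstacle is (iii): exhibiting $\theta \in \F_{p^a}^*$ that makes $\fE$ maximal, i.e.\ $|\fE(\F_{p^a})| = p^a + 1 + 2 p^{a/2}$. By Lemma \ref{lem:2.3}(ii), the isogeny class with trace $-2\sqrt{q}$ is nonempty when $a$ is even. Since $p$ is inert in $\ZZ[i]$ under the hypothesis $p \equiv 3 \pmod 4$, the curve $y^2 = x^3 + x$ is supersingular, and its Frobenius eigenvalues over $\F_{p^2}$ are pinned down up to a known finite set, giving an explicit point count over every even-degree extension. The quartic twists of this curve are parametrized by $\F_{p^a}^* / (\F_{p^a}^*)^4$; the subfamily $y^2 = x^3 + \theta^2 x$, i.e.\ the twists by squares, constitutes two of the four twist classes, and a direct case analysis of the Frobenius action shows that one of them realizes trace $-2\sqrt{q}$. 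This is the substantive content of the analogue of \cite[Lemma 18]{LMX19}, which I would invoke rather than redo. Parts (i) and (ii) then complete the proof.
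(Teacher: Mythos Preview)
Your proposal is correct and follows essentially the same approach as the paper: the paper's proof consists entirely of the citation ``Please refer to \cite[Lemma 16 and Lemma 12]{LMX19}'', and you likewise defer the substantive maximality claim (your part (iii)) to the corresponding lemma in \cite{LMX19}, while additionally spelling out the routine $j$-invariant, automorphism-form, and $\mu_4\subset\F_{p^a}^*$ verifications that the paper leaves inside that citation. Your reference to ``Lemma 18'' rather than ``Lemma 16'' is in fact the expected one given the numbering pattern in Lemmas~\ref{lem:2.5}--\ref{lem:2.7}; the paper's ``Lemma 16'' here appears to be a typographical slip.
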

\begin{proof}
Please refer to \cite[Lemma 16 and Lemma 12]{LMX19}.
\end{proof}

\subsection{Algebraic geometry codes}
For the construction of algebraic geometry codes, the reader may refer to \cite{St09} for more details.
Let $E/\F_q$ be a function field and let $\mP=\{P_1,\dots,P_n\}$ be a set of $n$ distinct rational places of $E$. For a divisor $D$ of $E$ with $0<\deg(D)<n$ and $\supp(D)\cap\mP=\emptyset$, the algebraic geometry code associated with $D$ and $\mP$ is defined by
\[C(\mP,D):=\{(f(P_1),\dots,f(P_n)): \; f\in\mL(D)\}.\]
Then $C(\mP,D)$ is an $[n,k,d]$-linear code with dimension $k=\deg(D)$ and minimum distance $d\ge n-\deg(D)$.

In order to construct optimal locally repairable codes with larger length, we need to remove the restriction $\supp(D)\cap\mP=\emptyset$.
Let $m_i=\nu_{P_i}(D)$ and choose a prime element $\pi_i$ of $P_i$ for each $i\in \{1,2,\cdots, n\}$.
The modified algebraic geometry code given in \cite{JMX20} is defined as follows
\[C(\mP,D):=\left\{\left((\pi_1^{m_1}f)(P_1),\dots,(\pi_n^{m_n}f)(P_n)\right):\; f\in\mL(D)\right\}.\]
Then $C(\mP,D)$ is an $[n,k,d]$-linear code with dimension $k=\deg(D)$ and minimum distance $d\ge n-\deg(D)$.
It is easy to see that the Hamming weight of the codeword $\left((\pi_1^{m_1}f)(P_1),\dots,(\pi_n^{m_n}f)(P_n)\right)$ is at least $n-\deg(D)$ for every nonzero function $f\in\mL(D)$. Let $I$ be the subset of $\{1,2,\dots,n\}$ such that $(\pi_i^{m_i}f)(P_i)=0$. Then we have $0\neq f\in\mL(D-\sum_{i\in I}P_i)$. Thus, $\deg(D-\sum_{i\in I}P_i)\ge 0$, i.e., $|I|\le \deg(D)$. The weight of this codeword is lower bounded by  $n-|I|\ge n-\deg(D)$.

Let $V$ be a subspace of $\mL(D)$, then a subcode of $C(\mP,D)$ can be defined by
\[C(\mP,V):=\{((\pi_1^{m_1}f)(P_1),\dots,(\pi_n^{m_n}f)(P_n)):\; f\in V\}.\]
The minimum distance of  $C(\mP,V)$ is still lower bounded by $n-\deg(D)$.

\subsection{Locally repairable codes}
Informally, a block code $C$ is said with locality $r$ if  every coordinate of any given codeword in $C$ can be recovered by accessing at most $r$ other coordinates of this codeword. The formal definition of a locally repairable code with locality $r$ is given as follows.
\begin{defn}
Let $C\subseteq \F_q^n$ be a $q$-ary block code of length $n$. For each $\Ga\in\F_q$ and $i\in \{1,2,\cdots, n\}$, define $C(i,\Ga):=\{\bc=(c_1,\dots,c_n)\in C\; :\; \; c_i=\Ga\}$. For a subset $I\subseteq \{1,2,\cdots, n\}\setminus \{i\}$, we denote by $C_{I}(i,\Ga)$ the projection of $C(i,\Ga)$ on $I$.
Then $C$ is called a locally repairable code with locality $r$ if, for every $i\in \{1,2,\cdots, n\}$, there exists a subset
$I_i\subseteq \{1,2,\cdots, n\}\setminus \{i\}$ with $|I_i|\le r$ such that  $C_{I_i}(i,\Ga)$ and $C_{I_i}(i,\Gb)$ are disjoint for any $\Ga\neq \Gb\in\F_q$.
\end{defn}
In this paper, we focus on linear codes. Thus, a $q$-ary \LRC of length $n$, dimension $k$, distance $d$ and locality $r$ is denoted as a $q$-ary $[n,k,d]$-\LRC with locality $r$. It was shown in \cite{GHSY12}  that the minimum distance $d(C)$ of $C$ is upper bounded by
 \[d(C)\le n-k-\left\lceil \frac kr\right\rceil+2, \]
which is now called the Singleton-type bound for locally repairable codes.  A code achieving this bound is referred to as an optimal locally repairable code.
We shall construct optimal locally repairable codes via automorphism groups of elliptic function fields over finite fields.

\section{Automorphism groups of elliptic function fields over finite fields and their subgroups}\label{sec:3}
Let $E$ be an elliptic function field defined over the finite field $\F_q$. Let $\bar{\F}_q$ be the algebraic closure of $\F_q$.
The automorphism group of $E\bar{\F}_q$ over $\bar{\F}_q$ is well known in the literature, such as in \cite[Theorem 11.9.4]{HKT08} and \cite[Theorem 10.5.1]{Si86}. As for various applications in coding theory and cryptography, we usually need automorphisms defined over the finite field $\F_q$. We believe that the automorphism group of $E$ over $\F_q$ is also known in the community, although we could not find a good reference.  Nevertheless,
in this section, we shall determine the group structure of the automorphism group of an elliptic function field $E$ over $\F_q$ for completeness and characterize their subgroups and abelian subgroups. Define
\[\Aut(E/\F_q)=\{\sigma: E\rightarrow E|\;\sigma \text{ is an } \F_q\text{-automorphism of } E \}.\]
In fact, the automorphism group $\Aut(E/\F_q)$ of the elliptic function field $E$ over $\F_q$ is a subgroup of the automorphism group of $\Aut(E\bar{\F}_q/\bar{\F}_q)$.

\subsection{Automorphism groups of elliptic function fields over finite fields}\label{subsec:3.1}
In this subsection, we give an explicit characterization of automorphism groups of elliptic function fields over finite fields via group action.
Let $\PP_E^1$ be the set of rational places of $E/\F_q$. Now we consider the group action of automorphism group $\Aut(E/\F_q)$ on the set of rational places $\PP_E^1$. Let $\sigma$ be an automorphism of $E$ over $\F_q$ and let $P$ be a rational place of $E$. From \cite[Lemma 3.5.2]{St09}, we have $\sigma(P)$ must be a rational place of $E$.

Let $\Aut(E,O)$ be the stabilizer of the infinite place $O$ under the group action of $\Aut(E/\F_q)$, i.e., $\Aut(E,O)=\{\Gs\in \Aut(E/\F_q):\Gs(O)=O\}$. It consists of all   automorphisms defined over $\F_q$ in the automorphism group $\Aut(\fE)$ of the corresponding elliptic curve $\fE$ given in Proposition \ref{prop:2.5}. The automorphism group $\Aut(E/\F_q)$ acts transitively on the set of rational places $\PP_E^1$, since the translation-by-$Q$ map $\tau_Q$ defined by $\tau_Q(P)=P\oplus Q$ induces an automorphism of the elliptic function field $E$ over $\F_q$ for each place $Q\in \PP_E^1$. Hence, the orbit of $O$ under the group action of  $\Aut(E/\F_q)$ is $\PP_E^1=\{\Gs(O):\Gs\in \Aut(E/\F_q)\}$. By the theory of group action, the order of the automorphism group $\Aut(E/\F_q)$ is given by $|\Aut(E/\F_q)|=|\PP_E^1| \cdot |\Aut(E,O)|.$ Let $T_E$ be the translation group $\{\tau_Q:\; Q\in \PP_E^1\}$ of the elliptic function field $E$, we have \[ |\Aut(E/\F_q)|=|T_E| \cdot |\Aut(E,O)|.\]  We have the following group structure of automorphism group $\Aut(E/\F_q)$ of the elliptic function field $E$ over $\F_q$.

\begin{theorem}\label{thm:3.1}
Let $E$ be an elliptic function field defined over $\F_q$. Let $\Aut(E,O)$ be the stabilizer of the infinite place $O$ with automorphisms defined over $\F_q$. Let $T_E$ be the translation group $\{\tau_Q:\; Q\in \PP_E^1\}$ of $E$. The automorphism group of elliptic function field $E$ over $\F_q$ is the semidirect product of the translation group $T_E$ and the stabilizer $\Aut(E,O)$ of the infinite place $O$, i.e., \[\Aut(E/\F_q)=T_E \rtimes \Aut(E,O). \] The group law of $\Aut(E/\F_q)$ is given by $(\tau_P\Ga)\cdot (\tau_Q\Gb)=\tau_{P\oplus \Ga(Q)}\cdot \Ga\Gb$ for any $\tau_P,\tau_Q\in T_E$ and $\Ga,\Gb\in \Aut(E,O)$.
\end{theorem}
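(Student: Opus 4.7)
The plan is to verify the three defining conditions for a semidirect product: (i) $\Aut(E/\F_q) = T_E\cdot \Aut(E,O)$, (ii) $T_E\cap \Aut(E,O) = \{\mathrm{id}\}$, and (iii) $T_E$ is normal in $\Aut(E/\F_q)$; then the group law will follow from the conjugation action.

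For (i), given any $\sigma\in\Aut(E/\F_q)$, set $Q = \sigma(O)\in \PP_E^1$, which is well-defined because automorphisms send rational places to rational places. Consider $\alpha := \tau_{Q}^{-1}\circ\sigma = \tau_{\ominus Q}\circ \sigma$. Since $\tau_{\ominus Q}(Q) = Q\oplus(\ominus Q) = O$, we get $\alpha(O) = O$, so $\alpha\in\Aut(E,O)$ and $\sigma = \tau_Q\cdot\alpha$. For (ii), any $\tau_Q$ fixing $O$ must satisfy $\tau_Q(O) = O\oplus Q = Q = O$, so $\tau_Q = \tau_O = \mathrm{id}$. Uniqueness of the decomposition in (i) follows immediately from (ii).

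The key technical ingredient for normality is that every $\alpha\in\Aut(E,O)$ is a \emph{group} automorphism of $(\PP_E^1,\oplus)$. This uses the group isomorphism $\Phi\colon \PP_E^1\to \mathrm{Cl}^0(E)$, $P\mapsto[P-O]$ from \cite[Proposition 6.1.7]{St09}: if $P\oplus Q = R$, then $P+Q\sim R+O$, and applying $\alpha$ (which preserves linear equivalence since it is an $\F_q$-automorphism) together with $\alpha(O)=O$ yields $\alpha(P)+\alpha(Q)\sim \alpha(R)+O$, i.e.\ $\alpha(P)\oplus\alpha(Q) = \alpha(R)$. Consequently, for any $P\in\PP_E^1$,
\[
(\alpha\circ\tau_Q\circ\alpha^{-1})(P) = \alpha\bigl(\alpha^{-1}(P)\oplus Q\bigr) = P\oplus \alpha(Q) = \tau_{\alpha(Q)}(P),
\]
which proves the conjugation identity $\alpha\tau_Q\alpha^{-1} = \tau_{\alpha(Q)}$ and therefore normality of $T_E$.

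Combining these facts gives the semidirect product decomposition $\Aut(E/\F_q) = T_E\rtimes \Aut(E,O)$. The stated multiplication law is then an immediate consequence of the conjugation identity: using $\alpha\tau_Q = \tau_{\alpha(Q)}\alpha$, one computes
\[
(\tau_P\alpha)\cdot(\tau_Q\beta) = \tau_P\bigl(\alpha\tau_Q\bigr)\beta = \tau_P\tau_{\alpha(Q)}\alpha\beta = \tau_{P\oplus\alpha(Q)}\cdot\alpha\beta.
\]
The main subtle point to get right is the fact that every $\F_q$-automorphism fixing $O$ respects the additive structure on $\PP_E^1$; everything else is routine once this is established, since the transitivity of $\Aut(E/\F_q)$ on $\PP_E^1$ (recorded in the paragraph preceding the theorem via the translation maps $\tau_Q$) together with the stabilizer argument furnishes both the product decomposition and the order formula $|\Aut(E/\F_q)| = |T_E|\cdot|\Aut(E,O)|$.
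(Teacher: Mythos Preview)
Your proof is correct and follows the same three-step strategy as the paper: write $\sigma=\tau_{\sigma(O)}\alpha$ to get the product decomposition, check that $T_E\cap\Aut(E,O)=\{\mathrm{id}\}$, and establish normality of $T_E$ via a conjugation formula, from which the group law drops out.

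The one noteworthy difference is in the normality step. You prove the identity $\alpha\tau_Q\alpha^{-1}=\tau_{\alpha(Q)}$ only for $\alpha\in\Aut(E,O)$, and you justify explicitly---via the isomorphism $\PP_E^1\cong\mathrm{Cl}^0(E)$ and the fact that $\alpha$ preserves linear equivalence and fixes $O$---that such $\alpha$ are group automorphisms of $(\PP_E^1,\oplus)$. The paper instead asserts $\sigma^{-1}\tau_Q\sigma=\tau_{\sigma^{-1}(Q)}$ for \emph{every} $\sigma\in\Aut(E/\F_q)$, appealing to the same homomorphism property for arbitrary $\sigma$. That stronger identity is actually false for translations (for $\sigma=\tau_R$ one gets $\tau_R^{-1}\tau_Q\tau_R=\tau_Q$, not $\tau_{Q\ominus R}$), although the conclusion $\sigma^{-1}T_E\sigma\subseteq T_E$ of course still holds. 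Your restriction to $\alpha\in\Aut(E,O)$ is both sufficient (since $T_E$ is abelian and $\Aut(E/\F_q)=T_E\cdot\Aut(E,O)$, normalization by $\Aut(E,O)$ already forces $T_E\trianglelefteq\Aut(E/\F_q)$) and more accurate.
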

\begin{proof}
For every automorphism $\sigma \in \Aut(E/\F_q)$, it is easy to verify that $\tau_{-\sigma(O)}  \sigma: E\rightarrow E$ is an automorphism of $E$ fixing $O$, since
$\tau_{-\sigma(\mathcal{O})} \sigma(O)=\sigma(O)\oplus (-\sigma(O))=O.$ Then we have $\tau_{-\sigma(O)} \sigma=\rho \in \Aut(E,O)$, i.e., $\sigma= \tau_{\sigma(O)}\rho\in \tau_{\sigma(O)} \Aut(E,O)\subseteq T_E \cdot \Aut(E,O)$. From the order of automorphism group, we have $\Aut(E/\F_q)=T_E \cdot \Aut(E,O)$.

It is easy to see that the intersection of $T_E$ and $\Aut(E,O)$ is the unique identity isomorphism, since the translation $\tau_Q$ maps $O$ to another rational place $Q$ of $E$ for $Q\neq O$. Hence,  every automorphism $\sigma \in \Aut(E/\F_q)$ can be uniquely written as a composition of a translation $\tau_{\sigma(O)}$ and an automorphism $\rho$ of $E$ fixing $O$.

We claim that the translation group $T_E$ is a normal subgroup of  $\Aut(E/\F_q)$. For any automorphism $\sigma \in \Aut(E/\F_q)$ and $P\in \PP_{E\bar{\F}_q}^1$, it is easy to verify that
\[\sigma^{-1}\tau_Q \sigma(P)=\sigma^{-1}(\sigma(P)\oplus Q)=\sigma^{-1}(\sigma(P))\oplus \sigma^{-1}(Q)=P\oplus \sigma^{-1}(Q)=\tau_{\sigma^{-1}(Q)}(P).\]
Hence, $\sigma^{-1}\tau_Q \Gs=\tau_{\sigma^{-1}(Q)}\in T_E$. Moreover, the group law of $\Aut(E/\F_q)$ is given by
$$(\tau_P\Ga)\cdot (\tau_Q\Gb)=\tau_P\cdot \Ga \tau_Q \cdot \Gb=\tau_P\cdot \tau_{\Ga(Q)}\Ga\cdot \Gb=\tau_{P\oplus \Ga(Q)}\cdot \Ga\Gb$$
for any $\tau_P,\tau_Q\in T_E$ and $\Ga,\Gb\in \Aut(E,O)$. This completes the proof.
\end{proof}
\begin{rmk}{\rm The translation group $T_E$ is isomorphic to $\fE(\F_q)\cong \PP_E^1$.   
The stabilizer $\Aut(E,O)$  is a subgroup of $\Aut(\fE)$ and thus its order is a divisor of $24$. This implies that the size of $\Aut(E/\F_q)$ is at most $24|\fE(\F_q)|$.
}\end{rmk}

\subsection{Subgroups of $\Aut(E/\F_q)$}\label{subsec:3.2}
In this subsection, we want to characterize the subgroup structures of automorphism group $\Aut(E/\F_q)$. For any $\sigma \in \Aut(E/\F_q)$, it can be uniquely written as $\sigma=\tau_{\sigma(O)}\rho$ for some automorphism $\rho\in \Aut(E,O)$. Let $\pi: \Aut(E/\F_q)\rightarrow \Aut(E,O)$ be the projection map, i.e., $\pi(\sigma)=\rho$.
From Theorem \ref{thm:3.1}, we obtain the following exact sequence
\[ 1\rightarrow T_E \hookrightarrow \Aut(E/\F_q)\stackrel{\pi} \rightarrow \Aut(E,O) \rightarrow 1.\]
Let $G$ be a subgroup of $\Aut(E/\F_q)$. From the second isomorphism theorem, we have the exact sequence
\[1\rightarrow T_E\cap G \hookrightarrow G \stackrel{\pi} \rightarrow \pi(G) \rightarrow 1.\]
Hence, we have the following group structure of subgroups of automorphism groups of elliptic function fields over finite fields.

\begin{theorem}\label{thm:3.2}
Let $E/\F_q$ be an elliptic function field and let $G$ be a subgroup of $\Aut(E/\F_q)$. Then, we have $G \cong (T_E\cap G) \rtimes \pi(G)$, i.e., every subgroup of $\Aut(E/\F_q)$ is isomorphic to a semiproduct of a subgroup of $T_E$ and a subgroup of $\Aut(E, O)$.
\end{theorem}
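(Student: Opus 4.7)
The plan is to establish that the short exact sequence
\[
1\to T_E\cap G\to G\xrightarrow{\pi}\pi(G)\to 1
\]
already displayed above splits, which immediately yields the claimed isomorphism $G\cong(T_E\cap G)\rtimes\pi(G)$. First I would record the preliminaries: $T_E\cap G$ is normal in $G$ because $T_E$ is normal in $\Aut(E/\F_q)$ (proof of Theorem \ref{thm:3.1}); and the conjugation action of $\pi(G)$ on $T_E\cap G$ coincides with the restriction of the natural action of $\Aut(E,O)$ on $T_E$, since for any $g=\tau_P\rho\in G$ and $\tau_Q\in T_E\cap G$ one has $g\tau_Q g^{-1}=\tau_{\rho(Q)}\in G$, so $\rho(Q)\in T_E\cap G$. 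These observations make the target semidirect product $(T_E\cap G)\rtimes\pi(G)$ well defined.

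The main technical step is constructing a group-theoretic section $s:\pi(G)\to G$. The essential input is that for every non-identity $\rho\in\Aut(E,O)$ of order $n$, the norm endomorphism $N_\rho:=1+\rho+\rho^2+\cdots+\rho^{n-1}\in\mathrm{End}(\fE)$ vanishes. Indeed $(\rho-1)N_\rho=\rho^n-1=0$, and $\mathrm{End}(\fE)$ has no zero divisors (being either $\mathbb{Z}$, an order in an imaginary quadratic field, or an order in a quaternion algebra over $\mathbb{Q}$), so $N_\rho=0$. Consequently, for any lift $g=\tau_P\rho\in G$ of $\rho$, an easy induction gives
\[
g^k=\tau_{(1+\rho+\cdots+\rho^{k-1})(P)}\cdot\rho^k,
\]
so $g^n=\tau_{N_\rho(P)}\cdot\rho^n=\tau_O=1$. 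Hence every such lift $g$ has exactly the same order as its projection $\rho$.

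This immediately settles the cyclic case: if $\pi(G)=\langle\rho\rangle$ and $g\in\pi^{-1}(\rho)\cap G$, then $\langle g\rangle$ has order $|\pi(G)|$ and meets $T_E$ only in the identity (since $g^k\in T_E$ forces $\rho^k=1$, hence $n\mid k$, hence $g^k=1$). Order counting against the exact sequence gives $G=(T_E\cap G)\cdot\langle g\rangle$ with trivial intersection and $T_E\cap G$ normal, proving $G\cong(T_E\cap G)\rtimes\langle g\rangle\cong(T_E\cap G)\rtimes\pi(G)$.

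The hard part will be the non-cyclic case, where one must produce compatible lifts of several generators of $\pi(G)$; equivalently, one must show that the class of the extension in $H^2(\pi(G),T_E\cap G)$ vanishes. I would handle this by exploiting the very rigid structure of $\Aut(E,O)$ catalogued in Proposition \ref{prop:2.5} and Lemmas \ref{lem:2.5}--\ref{lem:2.8}: its order divides $24$, and in every case it contains a \emph{unique} involution (the inversion automorphism), which rules out Klein four subgroups and sharply restricts the possible non-cyclic $\pi(G)$. A case analysis on the remaining non-cyclic subgroups of $\Aut(E,O)$ (in particular those arising in characteristics $2$ and $3$), in each case using $N_\rho=0$ to force the orders of the chosen lifts to match the orders of the generators, allows one to adjust the lifts by suitable elements of $T_E\cap G$ to satisfy the defining relations of $\pi(G)$ and thereby produce the required section.
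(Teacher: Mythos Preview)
Your argument is correct and in fact supplies more than the paper does. The paper's entire treatment of Theorem~\ref{thm:3.2} is the paragraph preceding it: it records the short exact sequence $1\to T_E\cap G\to G\xrightarrow{\pi}\pi(G)\to 1$ (by intersecting the global sequence of Theorem~\ref{thm:3.1} with $G$) and then simply states the semidirect-product conclusion, without justifying that the sequence splits. That the authors invoke Schur--Zassenhaus in the very next corollary under the extra hypothesis $\gcd(|T_E|,|\Aut(E,O)|)=1$ suggests they are aware splitting is not automatic, yet Theorem~\ref{thm:3.2} itself carries no such hypothesis and no splitting argument.

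Your key observation---that for every non-identity $\rho\in\Aut(E,O)$ of order $n$ the norm $N_\rho=1+\rho+\cdots+\rho^{n-1}$ vanishes in $\mathrm{End}(\fE)$, since $(\rho-1)N_\rho=0$ and the endomorphism ring of an elliptic curve is a domain---is precisely the missing ingredient, and it does not appear in the paper. It immediately forces any lift $g=\tau_P\rho\in G$ to have the same order as $\rho$, giving a clean section whenever $\pi(G)$ is cyclic; your order-counting conclusion $G=(T_E\cap G)\rtimes\langle g\rangle$ is correct. For non-cyclic $\pi(G)$ your proposed case analysis is the right strategy: the unique-involution observation is valid (in every case the only order-$2$ element of $\Aut(E,O)$ is $[-1]$), so the $2$-Sylow is cyclic or generalized quaternion, and the only non-cyclic possibilities for $\pi(G)$ are $Q_8$, the dicyclic group of order $12$, and $\mathrm{SL}_2(\F_3)$, in characteristics $2$ and $3$. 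In summary, you and the paper share the same exact-sequence setup, but you actually provide the splitting step that the paper omits.
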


\begin{cor}
Let $E/\F_q$ be an elliptic function field and let $G$ be a subgroup of $\Aut(E/\F_q)$. If $\gcd(|T_E|,|\Aut(E,O)|)=1$, then there exist a subgroup $T$ of $T_E$ and a subgroup $A$ of some conjugate of $\Aut(E, O)$ such that $G=TA=T\rtimes A$. 
\end{cor}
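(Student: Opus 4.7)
The plan is to apply the Schur--Zassenhaus theorem twice: once inside $G$ to obtain the semidirect factorization, and once inside a suitable auxiliary subgroup to move the complement into a conjugate of $\Aut(E,O)$. Throughout I will set $T := T_E\cap G$ and use Theorems \ref{thm:3.1} and \ref{thm:3.2} freely.

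First I would establish the internal factorization $G = T\rtimes A$. Since $T_E$ is normal in $\Aut(E/\F_q)$, the intersection $T$ is normal in $G$, with quotient $G/T\cong \pi(G)\le \Aut(E,O)$. Therefore $|T|$ divides $|T_E|$ and $|G/T|$ divides $|\Aut(E,O)|$, so the hypothesis $\gcd(|T_E|,|\Aut(E,O)|)=1$ forces $\gcd(|T|,|G/T|)=1$. Because $T_E\cong \PP_E^1$ is abelian, so is $T$, and the Schur--Zassenhaus theorem in its easy (solvable-kernel) form delivers a complement $A\le G$ with $G = T\rtimes A$ and $|A| = |\pi(G)|$.

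Next I would show that this $A$ lies in a conjugate of $\Aut(E,O)$. Let $K := T_E\cdot A\le \Aut(E/\F_q)$, which is a subgroup because $T_E$ is normal. The coprimality $\gcd(|A|,|T_E|)=1$ forces $T_E\cap A = \{1\}$, hence $|K|=|T_E|\cdot|A|$ and $A$ is a complement to $T_E$ inside $K$. The image $\pi(A)\le \Aut(E,O)$ has the same order as $A$. Moreover, by Theorem \ref{thm:3.1} every $a\in A$ factorises as $a = \tau_{a(O)}\cdot \pi(a)$, which rearranges to $\pi(a) = \tau_{-a(O)}\cdot a\in T_E\cdot A = K$; hence $\pi(A)\le K$ is a second complement of $T_E$ in $K$. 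A second application of Schur--Zassenhaus to $T_E$ normal in $K$ then yields an element $g\in K$ with $gAg^{-1} = \pi(A)$, so $A\le g^{-1}\Aut(E,O)g$, a conjugate of $\Aut(E,O)$.

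The main obstacle is not either invocation of Schur--Zassenhaus; both go through with no friction because $T_E$ is abelian and the required coprimality is built into the hypothesis. Rather, the one point that demands genuine care is verifying that $\pi(A)$, defined \emph{a priori} as a subgroup of $\Aut(E,O)$, really sits inside $K = T_E A$ so that it can play the role of a second complement there. This is precisely where the explicit semidirect product presentation of Theorem \ref{thm:3.1} enters; without it, the uniqueness-up-to-conjugation clause of Schur--Zassenhaus could not be deployed to link $A$ back to the original $\Aut(E,O)$.
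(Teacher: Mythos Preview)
Your argument is correct and follows essentially the same route as the paper: apply Schur--Zassenhaus inside $G$ to split off a complement $A$ of $T=T_E\cap G$, then use the conjugacy clause of Schur--Zassenhaus inside $T_E A$ to conjugate $A$ into $\Aut(E,O)$. In fact you spell out explicitly the step the paper abbreviates---namely, that $\pi(A)$ is a second complement of $T_E$ in $T_E A$ because each $\pi(a)=\tau_{-a(O)}a$ lies in $T_E A$---so your exposition is, if anything, cleaner on that point.
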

\begin{proof}
From Theorem \ref{thm:3.2}, $G\cap T_E$ is a normal subgroup of $G$.  If $\gcd(|T_E|,|\Aut(E,O)|)=1$, then we have $\gcd(|G\cap T_E|, [G:G\cap T_E])=1$. From Schur-Zassenhaus theorem \cite[Theorem 9.1.2]{Ro96}, $G$ contains subgroups of order $[G:G\cap T_E]$ and any two of them are conjugate in $G$. Let $C$ be a subgroup of $G$ with order $[G:G\cap T_E]$. Then we have $G=(G\cap T_E)C=(G\cap T_E)\rtimes C$, i.e., $C$ is a complement of $G\cap T_E$ in $G$. All complements of $T_E$ in $T_E C$ are conjugate, so $C$ is contained in some conjugate $g^{-1}\Aut(E,O) g$ of $\Aut(E,O)$. Hence, $G=(G\cap T_E)\cdot (G\cap g^{-1}\Aut(E,O)g)=(G\cap T_E)\rtimes (G\cap g^{-1}\Aut(E,O)g)$.
\end{proof}

Theorem \ref{thm:3.2} says that for any subgroup $G$ of $\Aut(E/\F_q)$, there exist unique subgroup $T$ of $T_E$ and subgroup $A$ of $\Aut(E, O)$ such that $G\cong T\rtimes A=TA$.  
Conversely, given a subgroup $T$ of $T_E$ and a subgroup $A$ of $\Aut(E,O)$, we want to know whether $TA$ is a subgroup of $\Aut(E/\F_q)$. This question can be answered easily from the following well-known elementary result from group theory.

\begin{lemma}\label{lem:3.2}
Let $A$ and $B$ be two subgroups of a group $H$. Then $AB\le H$ if and only if $AB=BA$.
\end{lemma}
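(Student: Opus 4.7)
The plan is to prove the two implications separately, using only that $A$ and $B$ are subgroups (so each is closed under products and inversion).

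For the forward direction, I assume that $AB$ is a subgroup of $H$. Since a subgroup is closed under taking inverses, I have $(AB)^{-1}=AB$. On the other hand, for any $a\in A$ and $b\in B$, one has $(ab)^{-1}=b^{-1}a^{-1}\in BA$ because $b^{-1}\in B$ and $a^{-1}\in A$. Hence $(AB)^{-1}=BA$, and the set equality $AB=BA$ follows. Conversely, every element $ba\in BA$ satisfies $ba=(a^{-1}b^{-1})^{-1}\in (AB)^{-1}=AB$, which gives the reverse containment; but the first containment already suffices.

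For the reverse direction, I assume $AB=BA$ and verify the subgroup criterion for $AB$ inside $H$. Given $a_1b_1,a_2b_2\in AB$, I compute
\[
(a_1b_1)(a_2b_2)^{-1}=a_1(b_1b_2^{-1})a_2^{-1}.
\]
Since $b_1b_2^{-1}\in B$ and $a_2^{-1}\in A$, the product $(b_1b_2^{-1})a_2^{-1}$ lies in $BA$, which equals $AB$ by hypothesis. So there exist $a'\in A$ and $b'\in B$ with $(b_1b_2^{-1})a_2^{-1}=a'b'$, and then
\[
(a_1b_1)(a_2b_2)^{-1}=(a_1a')b'\in AB.
\]
Nonemptiness is clear since $e=e\cdot e\in AB$. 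Therefore $AB$ is a subgroup of $H$.

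There is no real obstacle here: the only subtle point is recognizing that the natural way to prove $AB\le H$ when $AB=BA$ is via the single-formula subgroup test applied to $(a_1b_1)(a_2b_2)^{-1}$, which lets one move the middle factor $(b_1b_2^{-1})a_2^{-1}$ from $BA$ into $AB$ exactly once. Since this is a textbook fact, I would keep the write-up short and essentially present the two displays above.
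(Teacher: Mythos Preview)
Your proof is correct and follows essentially the same approach as the paper's. The forward direction is identical (using $(AB)^{-1}=BA$), and for the converse the paper carries out the same subgroup-criterion computation at the set level, writing $AB(AB)^{-1}=ABB^{-1}A^{-1}=ABA^{-1}=ABA=AAB=AB$, whereas you do the equivalent element-wise verification.
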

\begin{proof}
If $AB\le G$, then $(AB)^{-1}=AB$. It follows that $$BA=B^{-1}A^{-1}=(AB)^{-1}=AB.$$
Conversely, if $AB=BA$, then $$AB(AB)^{-1}=ABB^{-1}A^{-1}=ABA^{-1}=ABA=AAB=AB.$$
This completes the proof.\end{proof}
To find all subgroups of $\Aut(E/\F_q)$ up to isomorphism,  it follows form Theorem \ref{thm:3.2} and Lemma \ref{lem:3.2} that we need to find all pairs $(T,A)$ with $T\le T_E$ and $A\le \Aut(E, O)$ such that $TA=AT$. However, it is not an easy job to verify $TA=AT$ for a given pair $(T,A)$ with $T\le T_E$ and $A\le \Aut(E, O)$. The following result provides a criterion for which  $TA=AT$.

\begin{theorem}\label{thm:3.4}
Let $T$ be a subgroup of the translation group $T_E$ and let $A$ be a subgroup of $\Aut(E,O)$.
Then $TA$ is a subgroup of $\Aut(E/\F_q)$ if and only if $\tau_{\sigma^{-1}(Q)}\in T$ for all $\sigma\in A$ and $\tau_Q\in T$.
\end{theorem}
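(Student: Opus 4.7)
The plan is to apply Lemma~\ref{lem:3.2} to reduce the question ``is $TA$ a subgroup of $\Aut(E/\F_q)$?'' to the set equality $TA=AT$, and then translate this set equality into the conjugation condition ``$\sigma^{-1}T\sigma=T$ for every $\sigma\in A$.'' The bridge between the two formulations is the identity
\[ \sigma^{-1}\tau_Q\sigma \;=\; \tau_{\sigma^{-1}(Q)} \]
already verified inside the proof of Theorem~\ref{thm:3.1}; in fact, this is exactly the computation that showed $T_E$ is normal in $\Aut(E/\F_q)$. Because it identifies the conjugate of a translation by any $\sigma\in\Aut(E,O)$ with another translation, it converts the hypothesis ``$\tau_{\sigma^{-1}(Q)}\in T$ for all $\sigma\in A$ and $\tau_Q\in T$'' into the algebraically natural condition $\sigma^{-1}T\sigma\subseteq T$ for every $\sigma\in A$.

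For the direction $(\Leftarrow)$, this containment for every $\sigma\in A$ gives, upon replacing $\sigma$ by $\sigma^{-1}\in A$ (valid since $A$ is a subgroup), the reverse containment $T\subseteq \sigma^{-1}T\sigma$. Hence $\sigma T=T\sigma$ for every $\sigma\in A$, so $TA=AT$, and Lemma~\ref{lem:3.2} then yields $TA\le \Aut(E/\F_q)$.

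For the direction $(\Rightarrow)$, fix $\tau_Q\in T$ and $\sigma\in A$. The conjugation identity rewrites $\tau_Q\sigma$ as $\sigma\cdot \tau_{\sigma^{-1}(Q)}$; on the other hand, $\tau_Q\sigma\in TA=AT$ furnishes $\sigma'\in A$ and $\tau_{Q'}\in T$ with $\tau_Q\sigma=\sigma'\tau_{Q'}$. Equating the two $\bigl(\Aut(E,O)\cdot T_E\bigr)$-ordered expressions and invoking the uniqueness of such a factorization (the right-handed mirror of the unique decomposition in Theorem~\ref{thm:3.1}, which follows from that theorem simply by inversion) forces $\sigma'=\sigma$ and $\tau_{Q'}=\tau_{\sigma^{-1}(Q)}\in T$, as required.

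The step I expect to be the main obstacle is precisely this converse direction: the set-level equality $TA=AT$ does not hand over an element-wise identification for free, and one has to arrange the two natural rewritings of $\tau_Q\sigma$ and then exploit uniqueness of the semidirect-product factorization to pin down the translation component. Everything else is routine manipulation once the conjugation identity from the proof of Theorem~\ref{thm:3.1} is in hand.
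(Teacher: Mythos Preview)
Your proof is correct and follows essentially the same route as the paper: both directions hinge on Lemma~\ref{lem:3.2} together with the conjugation identity $\sigma^{-1}\tau_Q\sigma=\tau_{\sigma^{-1}(Q)}$ from the proof of Theorem~\ref{thm:3.1}. The only cosmetic difference is that for the forward direction the paper asserts in one line that $TA=AT$ implies $\sigma^{-1}T\sigma\subseteq T$, whereas you spell this out via the uniqueness of the $\Aut(E,O)\cdot T_E$ factorization; and for the reverse direction the paper passes from $\sigma^{-1}T\sigma\subseteq T$ to $\sigma T=T\sigma$ tacitly (using either finiteness of $T$ or the same $\sigma\mapsto\sigma^{-1}$ trick you make explicit).
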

\begin{proof}
If $TA$ is a subgroup of $\Aut(E/\F_q)$, then we have $TA=AT$ from Lemma \ref{lem:3.2}. It follows that $\sigma^{-1}T\sigma\subseteq T$ for all $\sigma\in A$. In particular, we have $\sigma^{-1} \tau_Q\sigma=\tau_{\sigma^{-1}(Q)}\in T$ for all $\sigma\in A$ and $\tau_Q\in T$.

If  $\tau_{\sigma^{-1}(Q)}\in T$ for all $\sigma\in A$ and $\tau_Q\in T$, then we have $\sigma^{-1}\tau_Q\sigma=\tau_{\sigma^{-1}(Q)}\in T$.
Then $\sigma^{-1}T\sigma\subseteq T$ for all $\sigma\in A$.  Hence, we have $\sigma T=T\sigma$ for all $\sigma\in A$. Thus, $TA=AT$. From Lemma \ref{lem:3.2}, we have $TA$ is a subgroup of $\Aut(E/\F_q)$.
\end{proof}

\begin{cor}\label{cor:3.5}
If $\PP_E^1$ is cyclic, then for any subgroup $T$ of $T_E$ and any subgroup $A$ of $\Aut(E,O)$,  $TA$ is a subgroup of $\Aut(E/\F_q)$.
\end{cor}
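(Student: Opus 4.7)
The plan is to invoke Theorem \ref{thm:3.4}: it suffices to check that for every $\sigma\in A$ and every $\tau_Q\in T$ we have $\tau_{\sigma^{-1}(Q)}\in T$. I would establish this by showing that conjugation by $\sigma$ acts on $T_E$ as a group automorphism, after which the cyclicity hypothesis forces the subgroup $T$ to be preserved.

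The first step is a purely group-theoretic observation: since $T_E\cong \PP_E^1$ is cyclic, it has a unique subgroup of each order dividing $|T_E|$. Hence $T$ is the unique subgroup of $T_E$ of order $|T|$, and is in particular a characteristic subgroup of $T_E$, i.e.\ invariant under every group automorphism of $T_E$.

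The second step is to verify that the conjugation map $\varphi_\sigma\colon T_E\to T_E$, $\tau_Q\mapsto \sigma^{-1}\tau_Q\sigma=\tau_{\sigma^{-1}(Q)}$, is indeed a group automorphism. Since $\sigma\in \Aut(E,O)$ fixes the place $O$ and extends to a divisor automorphism preserving linear equivalence, applying $\sigma^{-1}$ to the defining relation $P+Q\sim R+O$ of $\oplus$ gives $\sigma^{-1}(P)+\sigma^{-1}(Q)\sim \sigma^{-1}(R)+\sigma^{-1}(O)=\sigma^{-1}(R)+O$. Thus $\sigma^{-1}(P\oplus Q)=\sigma^{-1}(P)\oplus \sigma^{-1}(Q)$, so $\sigma^{-1}$ is a group homomorphism of $(\PP_E^1,\oplus)$; transporting along the isomorphism $T_E\cong \PP_E^1$, $Q\mapsto \tau_Q$, shows that $\varphi_\sigma$ is a group automorphism of $T_E$.

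Combining the two steps, $\varphi_\sigma(T)=T$ for every $\sigma\in A$, so $\tau_{\sigma^{-1}(Q)}\in T$ whenever $\tau_Q\in T$, and Theorem \ref{thm:3.4} yields $TA\le \Aut(E/\F_q)$. The only non-formal point in the argument is the compatibility of $\sigma^{-1}$ with $\oplus$, which is the standard fact that an $\F_q$-automorphism of $E$ fixing $O$ is an automorphism of the underlying elliptic group; no other obstacle is expected.
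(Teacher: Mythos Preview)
Your proof is correct and follows essentially the same approach as the paper: both invoke Theorem~\ref{thm:3.4} and use that each $\sigma\in\Aut(E,O)$ acts on $T_E\cong\PP_E^1$ as a group automorphism, hence preserves the unique subgroup of a given order in the cyclic group $T_E$. The paper phrases this via the $t$-torsion characterization ($\tau_P\in T\iff [t]P=O$, and $\sigma^{-1}([t]Q)=[t]\sigma^{-1}(Q)=O$), whereas you use the equivalent language of characteristic subgroups; your explicit verification that $\sigma^{-1}$ respects $\oplus$ is a point the paper leaves implicit.
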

\begin{proof} As $T_E$ is isomorphic to $\PP_E^1$, the translation group $T_E$ is also cyclic.
If $T$ is a subgroup of $T_E$, then there is a rational place $Q$ of $E$ such that $T=\langle \tau_Q\rangle$. Let $t=|T|$. Then $\tau_P\in T$ if and only if $[t]P=O$. As we know $[t]Q=O$, then we have $\sigma^{-1}([t]Q)=\sigma^{-1}(O)=O$ for all automorphisms $\sigma\in \Aut(E,O)$. It follows that $[t]\sigma^{-1}(Q)=O$. Hence, $\tau_{\sigma^{-1}(Q)}\in T$. From Theorem \ref{thm:3.4}, we have $TA$ is a subgroup of $\Aut(E/\F_q)$.
\end{proof}

\begin{cor}\label{cor:3.6}
Let $Q$ be a rational place of the elliptic function field $E/\F_q$ with order $m$, i.e., $[m]Q=O$ and $[k]Q\not= O$ for all $1\le k\le m-1$. Let $\langle Q\rangle$ be the cyclic group generated by $Q$. Let $A$ be the cyclic group generated by an automorphism $\sigma\in \Aut(E, O)$.
Let $T$ be a subgroup of $T_E$ generated by $\rho(Q)$ for all $\rho\in A$.
If $\sigma(Q)\in \langle Q\rangle$, then $TA$ is a subgroup of $\Aut(E/\F_q)$ with order $|TA|=e\cdot m$.
If $\sigma(Q)\notin \langle Q\rangle$, then $TA$ is a subgroup of $\Aut(E/\F_q)$ with order $em^2/|\langle \tau_Q\rangle \cap \langle \tau_{\sigma(Q)}\rangle|\le |TA|\le em^2$.
\end{cor}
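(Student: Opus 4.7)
My plan is to apply Theorem~\ref{thm:3.4} to verify that $TA$ is a subgroup, observe that $T\cap A=\{1\}$ so that $|TA|=e\cdot|T|$, and then compute or bound $|T|$ in each of the two cases. The key input throughout is that every element of $\Aut(E,O)$ fixes the zero place $O$ and is therefore a group automorphism of $\PP_E^1$.

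For the subgroup property, I use that $T$ is generated by the translations $\tau_{\sigma^j(Q)}$ for $0\le j<e$. For any indices $i,j$ the identity $\sigma^{-i}(\sigma^j(Q))=\sigma^{j-i}(Q)$ shows that the conjugate $\sigma^{-i}\tau_{\sigma^j(Q)}\sigma^i=\tau_{\sigma^{j-i}(Q)}$ is again one of these generators; extending $\ZZ$-linearly gives $\tau_{\sigma^{-i}(P)}\in T$ for every $\tau_P\in T$, and Theorem~\ref{thm:3.4} then yields $TA\le\Aut(E/\F_q)$. Because any nontrivial translation moves $O$ to a different place while every element of $A$ fixes $O$, we have $T\cap A=\{1\}$, hence $|TA|=e\cdot|T|$.

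In the case $\sigma(Q)\in\langle Q\rangle$, writing $\sigma(Q)=[k]Q$ and using that $\sigma$ is a group homomorphism of $\PP_E^1$, a short induction yields $\sigma^j(Q)=[k^j]Q\in\langle Q\rangle$ for every $j$, so every generator of $T$ lies in $\langle\tau_Q\rangle$; hence $T=\langle\tau_Q\rangle$ has order $m$ and $|TA|=em$. In the case $\sigma(Q)\notin\langle Q\rangle$, the upper bound follows because $\sigma$ preserves orders, so every generator of $T$ has order dividing $m$ and, $T_E$ being abelian, $T$ lies in the $m$-torsion subgroup $T_E[m]$; combined with the classical bound $|\fE[m](\bar{\F}_q)|\le m^2$ this yields $|T|\le m^2$ and hence $|TA|\le em^2$. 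The lower bound follows from the observation that $T$ contains the subgroup $\langle\tau_Q\rangle\cdot\langle\tau_{\sigma(Q)}\rangle$ (which is indeed a subgroup by the abelianness of $T_E$) whose order equals $m^2/|\langle\tau_Q\rangle\cap\langle\tau_{\sigma(Q)}\rangle|$.

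The main (and fairly mild) obstacle is the uniform bound $|T_E[m]|\le m^2$ across all characteristics: over $\bar{\F}_q$ the $m$-torsion is isomorphic to $(\ZZ/m\ZZ)^2$ whenever $\gcd(m,p)=1$, while each $p$-primary component is cyclic (or trivial in the supersingular case) when $p\mid m$, so $m^2$ is an upper bound in every case; all the remaining arguments are routine consequences of Theorem~\ref{thm:3.4} and the additivity of $\sigma$ on $\PP_E^1$.
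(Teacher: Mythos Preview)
Your proof is correct and follows essentially the same route as the paper: both verify the subgroup property via Theorem~\ref{thm:3.4}, use $T\cap A=\{1\}$ to get $|TA|=e\cdot|T|$, and handle the two cases by showing $T=\langle\tau_Q\rangle$ when $\sigma(Q)\in\langle Q\rangle$ and $\langle\tau_Q,\tau_{\sigma(Q)}\rangle\subseteq T\subseteq T_E[m]$ otherwise. The only cosmetic difference is that for the bound $|T|\le m^2$ you appeal directly to the classical structure of $\fE[m]$, whereas the paper cites Proposition~\ref{prop:2.4}.
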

\begin{proof}
If $Q$ is a rational place of $E$ with order $m$, then $\sigma(Q)$ is also a rational place of $E$ with order $m$.
If $\sigma(Q)\in \langle Q\rangle$, then $T=\langle \tau_Q\rangle\cong \ZZ_m$. Hence, $TA$ is a subgroup of $\Aut(E/\F_q)$ with order $|TA|=e\cdot m$ from Theorem \ref{thm:3.4}.
If $\sigma(Q)=R\notin \langle Q\rangle$, then $T\supseteq \langle \tau_Q,\tau_R\rangle$ and its order is $m^2/|\langle \tau_Q\rangle \cap \langle \tau_{R}\rangle|\le |T|\le m^2$ from Proposition \ref{prop:2.4}. Hence, $TA$ is a subgroup of $\Aut(E/\F_q)$ with order $em^2/|\langle \tau_Q\rangle \cap \langle \tau_{\sigma(Q)}\rangle|\le |TA|\le em^2$ from Theorem \ref{thm:3.4}.
\end{proof}

Generally, let $Q_i$ be rational places of $E$ with order $m_i\ge 2$ for each $1\le i\le \ell$. Let $A$ be a subgroup of $\Aut(E, O)$.  Let $\mathcal{T}$ be the set $\{\sigma(Q_i): \text{ for all } \sigma\in A \text{ and } 1\le i\le \ell \}$ and let $T$ be the subgroup of $T_E$ generated by $\tau_P$ for all $P\in \mathcal{T}$. Then $TA$ is a subgroup of $\Aut(E/\F_q)$. However, it is difficult to determine the order of $TA$ which depends on the group structure of $\PP_E^1$ or the class group of elliptic function field $E$.

\subsection{Abelian subgroups of $\Aut(E/\F_q)$}\label{subsec:3.3}
Let $E/\F_q$ be an elliptic function field defined over $\F_q$. Let $T$ be a subgroup of $T_E$ and let $A$ be a subgroup of $\Aut(E,O)$.
From Theorem \ref{thm:3.4},  $TA$ is a subgroup of $\Aut(E/\F_q)$ if and only if $\tau_{\sigma^{-1}(Q)}\in T$ for all $\sigma\in A$ and $\tau_Q\in T$.
In order to obtain more subgroups of $\Aut(E/\F_q)$, we need to consider the case $\sigma(Q)\in \langle Q\rangle$ for all $\sigma\in A$ in Corollary \ref{cor:3.6}.
For simplicity, we focus on the case $\sigma(Q)=Q$ in this subsection.
First let us provide an characterization of abelian subgroups of $\Aut(E/\F_q)$.

\begin{theorem}\label{thm:3.7}
Let $E/\F_q$ be an elliptic function field. Then  $G$ be an abelian subgroup of $\Aut(E/\F_q)$ if and only if $\pi(G)$ is abelian and $P\oplus \alpha(Q)=Q\oplus \beta(P)$ for any $\tau_P,\tau_Q\in T_E\cap G$ and any $\alpha,\beta\in \pi(G)$.
\end{theorem}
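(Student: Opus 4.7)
The plan is to lift the abelianness question to the internal semidirect product description of $G$ given by Theorem~\ref{thm:3.2} and then read off commutation of two elements directly from the group law of Theorem~\ref{thm:3.1}.

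First, I would invoke Theorem~\ref{thm:3.2} to write $G=(T_E\cap G)\rtimes H$ with $H\le G$ a complement on which $\pi$ restricts to an isomorphism $H\cong\pi(G)$. For each $\alpha\in\pi(G)$ let $h_\alpha\in H$ be the unique preimage; inside the ambient group $\Aut(E/\F_q)$, $h_\alpha$ has a factorization $h_\alpha=\tau_{R_\alpha}\alpha$ where $R_\alpha=h_\alpha(O)\in\PP^1_E$ (the individual factors need not lie in $G$). The upshot is that every element of $G$ has a unique representation $\tau_s\cdot h_\alpha$ with $\tau_s\in T_E\cap G$ and $\alpha\in\pi(G)$.

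Next, I would record the key conjugation identity: for every $t\in\PP^1_E$,
\[
h_\alpha\,\tau_t\, h_\alpha^{-1}=\tau_{R_\alpha}\alpha\,\tau_t\,\alpha^{-1}\tau_{-R_\alpha}=\tau_{R_\alpha}\tau_{\alpha(t)}\tau_{-R_\alpha}=\tau_{\alpha(t)},
\]
using the group law of Theorem~\ref{thm:3.1} and the commutativity of $T_E$. Note the right-hand side is independent of the chosen lift $R_\alpha$, and since $T_E\cap G$ is normal in $G$ the rule $\alpha\cdot\tau_t=\tau_{\alpha(t)}$ preserves $T_E\cap G$.

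Then I would compute the product of two elements $g=\tau_s h_\alpha$ and $g'=\tau_{s'}h_{\alpha'}$ of $G$:
\[
gg'=\tau_s\bigl(h_\alpha\tau_{s'}h_\alpha^{-1}\bigr)h_\alpha h_{\alpha'}=\tau_{s\oplus\alpha(s')}\,h_{\alpha\alpha'},
\]
where $h_\alpha h_{\alpha'}=h_{\alpha\alpha'}$ because $\pi|_H$ is an isomorphism. By symmetry, $g'g=\tau_{s'\oplus\alpha'(s)}\,h_{\alpha'\alpha}$. By uniqueness of the factorization $\tau_s h_\alpha$, the equality $gg'=g'g$ is equivalent to the conjunction of $\alpha\alpha'=\alpha'\alpha$ (so that $h_{\alpha\alpha'}=h_{\alpha'\alpha}$) and $s\oplus\alpha(s')=s'\oplus\alpha'(s)$. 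Finally, quantifying over all pairs $(\tau_s,\alpha)$ and $(\tau_{s'},\alpha')$, abelianness of $G$ becomes exactly: $\pi(G)$ is abelian and $P\oplus\alpha(Q)=Q\oplus\beta(P)$ for all $\tau_P,\tau_Q\in T_E\cap G$ and $\alpha,\beta\in\pi(G)$.

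The main obstacle is the bookkeeping between the ambient factorization $h_\alpha=\tau_{R_\alpha}\alpha$ in $\Aut(E/\F_q)$ and the purely internal description of $G$ as a semidirect product, since the lifts $R_\alpha$ are not canonical. Once the conjugation identity is in place and seen to be independent of this choice, the $R_\alpha$'s drop out entirely and the computation reduces to a routine expansion of the group law, yielding both directions of the equivalence simultaneously.
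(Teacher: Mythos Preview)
Your argument is correct but takes a more elaborate route than the paper. The paper never invokes Theorem~\ref{thm:3.2} or an internal complement $H\le G$; it works entirely in the ambient factorization of Theorem~\ref{thm:3.1}, writing each $g\in G$ as $\tau_{g(O)}\,\pi(g)$ and simply expanding $(\tau_P\alpha)(\tau_Q\beta)$ and $(\tau_Q\beta)(\tau_P\alpha)$ via the group law. Your detour through the internal semidirect product $G=(T_E\cap G)\rtimes H$ buys you a parametrization of $G$ by pairs in $(T_E\cap G)\times\pi(G)$, so your final condition lands directly on the independent quantifiers ``$\tau_P,\tau_Q\in T_E\cap G$ and $\alpha,\beta\in\pi(G)$'' exactly as stated; the paper's shorter computation instead produces the condition ``for all $\tau_P\alpha,\tau_Q\beta\in G$'' and tacitly identifies it with the statement. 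One caution: your use of Theorem~\ref{thm:3.2} presumes that the exact sequence $1\to T_E\cap G\to G\to\pi(G)\to 1$ splits inside $G$, whereas the paper's proof of Theorem~\ref{thm:3.2} only exhibits the sequence itself. If you want your argument to be fully self-contained, either justify the splitting or bypass it by working with the ambient decomposition as the paper does.
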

\begin{proof}
From Theorem \ref{thm:3.1}, every automorphism $\sigma \in \Aut(E/\F_q)$ can be uniquely written as a composition of a translation $\tau_{\sigma(O)}$ and an automorphism $\pi(\sigma)$ of $E$ fixing $O$. For any automorphism $\tau_P\alpha, \tau_Q\beta\in G$, then we have 
$$(\tau_P\alpha)\cdot (\tau_Q\beta)=\tau_{P\oplus \alpha(Q)}\cdot \Ga\Gb \text{ and } (\tau_Q\beta)\cdot (\tau_P\alpha)=\tau_{Q\oplus \Gb(P)}\cdot \Gb\Ga.$$
Hence, $G$ be an abelian group of $\Aut(E/\F_q)$ if and only if $(\tau_P\alpha)\cdot (\tau_Q\beta)=(\tau_Q\beta)\cdot (\tau_P\alpha)$ for any $\tau_P\alpha, \tau_Q\beta\in G$ if and only if $P\oplus \alpha(Q)=Q\oplus \beta(P)$ and $\alpha \beta=\beta\alpha$ for any $\tau_P,\tau_Q\in T_E\cap G$ and any $\alpha,\beta\in \pi(G)$.
\end{proof}

\begin{prop}\label{prop:3.7}
Let $E/\F_q$ be an elliptic function field. Let $Q$ be a rational place of $E$ and $\tau_Q$ be a translation of $E$. Let $\sigma$ be an automorphism in $\Aut(E,O)$.
Let $G$ be a group of $\Aut(E/\F_q)$ generated by $\tau_Q$ and $\sigma$. Then $G=\langle \tau_Q,\sigma\rangle$ is abelian if and only if $\sigma(Q)=Q$.
\end{prop}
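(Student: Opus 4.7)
The approach is to reduce the abelianness of $G=\langle \tau_Q,\sigma\rangle$ to the commutation of its two generators, and then compute the commutator using the explicit group law from Theorem \ref{thm:3.1}.

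First, I would observe that since $G$ is generated by two elements, $G$ is abelian if and only if $\tau_Q\sigma=\sigma\tau_Q$. One direction is the trivial fact that abelianness forces any two generators to commute; the other direction is the standard remark that any group generated by commuting elements is itself abelian (every word in the generators can be rewritten in a canonical form).

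Next I would compute the two products using the semidirect product law from Theorem \ref{thm:3.1}, namely $(\tau_P\alpha)(\tau_R\beta)=\tau_{P\oplus \alpha(R)}\,\alpha\beta$. Writing $\sigma=\tau_O\sigma$ and $\tau_Q=\tau_Q\,\mathrm{id}$, we obtain
\[
\tau_Q\cdot \sigma=\tau_{Q\oplus \mathrm{id}(O)}\cdot \sigma=\tau_Q\sigma,\qquad
\sigma\cdot \tau_Q=\tau_{O\oplus \sigma(Q)}\cdot \sigma=\tau_{\sigma(Q)}\sigma.
\]
By the uniqueness of the decomposition $\Aut(E/\F_q)=T_E\rtimes \Aut(E,O)$ established in Theorem \ref{thm:3.1}, the equality $\tau_Q\sigma=\tau_{\sigma(Q)}\sigma$ holds if and only if $\tau_Q=\tau_{\sigma(Q)}$, which by Lemma \ref{lem:2.1} is equivalent to $Q=\sigma(Q)$.

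Combining the two steps yields the claim: $G$ is abelian $\Longleftrightarrow$ $\tau_Q\sigma=\sigma\tau_Q$ $\Longleftrightarrow$ $\sigma(Q)=Q$. There is essentially no obstacle here, since everything follows immediately from the group law derived in Theorem \ref{thm:3.1} together with the uniqueness of the semidirect decomposition; the only mild subtlety is to remember that uniqueness of the decomposition (or equivalently $T_E\cap \Aut(E,O)=\{\mathrm{id}\}$) is what allows us to cancel $\sigma$ from the right and conclude equality of the translation parts. One could alternatively derive the statement as a direct specialization of Theorem \ref{thm:3.7} by taking $P=O,R=Q$, $\alpha=\sigma$, $\beta=\mathrm{id}$, but the direct commutator calculation is more transparent.
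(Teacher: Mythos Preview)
Your proof is correct. The forward direction matches the paper's argument (the paper simply evaluates both sides of $\sigma\tau_Q=\tau_Q\sigma$ at $O$, which is the same computation you do via the semidirect-product law). For the converse, however, you take a more elementary route than the paper: you use the standard fact that a group generated by two commuting elements is abelian, and then verify $\tau_Q\sigma=\sigma\tau_Q$ directly from the group law. The paper instead invokes its general criterion Theorem~\ref{thm:3.7}, which requires identifying $T_E\cap G=\langle\tau_Q\rangle$ and $\pi(G)=\langle\sigma\rangle$ and checking $P\oplus\alpha(R)=R\oplus\beta(P)$ for \emph{all} $P,R\in\langle Q\rangle$ and $\alpha,\beta\in\langle\sigma\rangle$; this in turn implicitly uses that $\sigma\in\Aut(E,O)$ is a group homomorphism on rational points (so $\sigma([k]Q)=[k]Q$). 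Your argument avoids this extra structure and is self-contained once Theorem~\ref{thm:3.1} is available. One small remark: the appeal to Lemma~\ref{lem:2.1} is unnecessary, since $\tau_Q=\tau_{\sigma(Q)}$ gives $Q=\sigma(Q)$ immediately by evaluating at $O$.
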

\begin{proof}
If $G$ is abelian, then we have $\tau_Q\sigma=\sigma\tau_Q$. In particular, we have $$\sigma(Q)=\sigma\tau_Q(O)=\tau_Q\sigma(O)=\tau_Q(O)=Q.$$
Conversely, if $\sigma(Q)=Q$, then it is easy to verify that $\pi(G)=\langle \Gs\rangle$ is abelian and $P\oplus \alpha(R)=P\oplus R=R\oplus P=R\oplus \beta(P)$ for any $\tau_P,\tau_R\in T_E\cap G=\langle \tau_Q \rangle$ and $\Ga,\Gb\in \langle \Gs\rangle$. From Theorem \ref{thm:3.7}, we have $G=\langle \tau_Q,\sigma\rangle$ is abelian.
\end{proof}

As we know the translation group $T_E$ is an abelian subgroup of the automorphism group $\Aut(E/\F_q)$ of an elliptic function field $E/\F_q$.
In order to find more abelian subgroups and optimal locally repairable codes with more flexible locality in the next section, we need to determine the order of abelian subgroups of automorphism group $\Aut(E/\F_q)$  generated by two elements $\tau_Q\in T_E$ and $\sigma\in \Aut(E,O)$. 

\begin{prop}\label{prop:3.8}
Let $E/\F_q$ be an elliptic function field. Let $Q$ be a rational place of $E$ and $\tau_Q$ be a translation of $E$. Let $\sigma$ be a nontrivial automorphism in $\Aut(E,O)$.  Let $G$ be the abelian group of $\Aut(E/\F_q)$ generated by $\tau_Q$ and $\sigma$. Then the order of $G$ is at most $9$.
\end{prop}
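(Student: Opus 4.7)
My plan is to reduce the problem to bounding the product $m n$, where $m$ is the order of $Q$ in $(\mathbb{P}^1_E, \oplus)$ and $n$ is the order of $\sigma$ in $\Aut(E, O)$, and then to carry out that bound via the endomorphism ring of $\fE$. By Proposition 3.7, the abelianness of $G$ is equivalent to $\sigma(Q) = Q$, which I shall assume throughout. Since $T_E \cap \Aut(E, O) = \{\mathrm{id}\}$ (Theorem 3.1), the subgroups $\langle \tau_Q \rangle$ and $\langle \sigma \rangle$ intersect trivially, and because they commute we obtain $G = \langle \tau_Q \rangle \times \langle \sigma \rangle$ and hence $|G| = m n$. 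It therefore suffices to show $m n \leq 9$.

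The key observation is that $\sigma$, being an $\F_q$-automorphism of $E$ fixing $O$, is an endomorphism of $\fE$ as an abelian variety, and its set of rational fixed places coincides with $\ker(\sigma - 1)$. In particular $Q \in \ker(\sigma - 1)$, and so $m \leq |\ker(\sigma - 1)(\bar{\F}_q)| \leq \deg(\sigma - 1)$. To compute the degree I will use that $\sigma$, having finite order $n$ in $\mathrm{End}(\fE)$, is integral of degree $2$ over $\ZZ$ and satisfies the reduced equation $\sigma^2 - \Tr(\sigma) \sigma + N(\sigma) = 0$; comparing with the $n$-th cyclotomic polynomial forces $N(\sigma) = 1$ and $\Tr(\sigma) \in \{-2, -1, 0, 1\}$ for $n \in \{2, 3, 4, 6\}$. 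Consequently, $\deg(\sigma - 1) = N(\sigma) - \Tr(\sigma) + 1 \in \{4, 3, 2, 1\}$.

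From Proposition 2.5 together with the explicit descriptions in Lemmas 2.5--2.8, the order $n$ of any nontrivial element of $\Aut(E, O)$ lies in $\{2, 3, 4, 6\}$ in every characteristic. Running through the four cases: $n = 2$ gives $m \leq 2$ and $m n \leq 4$; $n = 3$ gives $m \leq 3$ and $m n \leq 9$, realised exactly when $Q$ is a rational $3$-torsion point fixed by a cyclic order-$3$ subgroup of $\Aut(E, O)$; $n = 4$ gives $m \leq 2$ and $m n \leq 8$; and $n = 6$ gives $\deg(\sigma - 1) = 1$, which forces $\ker(\sigma - 1) = \{O\}$ and hence contradicts the hypothesis $Q \neq O$. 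Taking the maximum across these cases proves $|G| \leq 9$.

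The main obstacle will be in characteristics $2$ and $3$, where the isogeny $\sigma - 1$ can be inseparable when $p \mid n$, so that $|\ker(\sigma - 1)(\bar{\F}_q)|$ could in principle be strictly smaller than $\deg(\sigma - 1)$. Fortunately inseparability only shrinks the kernel, so the inequality $m \leq \deg(\sigma - 1)$ continues to hold and the argument goes through unchanged; as a sanity check, one can verify directly from the explicit coordinate formulas in Lemmas 2.5 and 2.6 that in the remaining wild cases (for instance $n = 6$ in characteristic $2$) the automorphisms in question genuinely have no nontrivial rational fixed point on $\fE$, so none of these borderline situations produces a counterexample.
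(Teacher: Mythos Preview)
Your argument is correct and takes a genuinely different route from the paper. The paper works through ramification theory: since $\sigma(Q)=Q$, each of the $m$ points $[k]Q$ is totally ramified in $E/E^{\langle\sigma\rangle}$, and the Hurwitz genus formula combined with Dedekind's different theorem gives $0 \ge -2e + m(e-1)$ with $e=\mathrm{ord}(\sigma)$, followed by a case analysis that must treat tame and wild ramification separately (the wild case in characteristic~$2$ requires an explicit computation of higher ramification data). You instead work in $\mathrm{End}(\fE)$: the unit $\sigma$ satisfies $x^2 - \Tr(\sigma)x + 1 = 0$, forcing $n \in \{2,3,4,6\}$, and $Q \in \ker(\sigma-1)$ with $|\ker(\sigma-1)(\bar{\F}_q)| \le \deg(\sigma-1) = 2 - \Tr(\sigma)$. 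This is cleaner and uniform across characteristics, since inseparability of $\sigma-1$ can only shrink the kernel.

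Two small corrections to your write-up: for $n=2$ your argument as stated only yields $m \le \deg(\sigma-1)=4$, not $m \le 2$; the sharper bound is true (because $\ker([-1]-[1])=\fE[2]$ is $2$-torsion), but either bound gives $mn \le 8$, so the conclusion stands. In the $n=6$ case there is no hypothesis ``$Q \neq O$'' to contradict---you simply conclude $Q = O$, hence $m=1$ and $|G|=6$. Finally, the claim $n \in \{2,3,4,6\}$ is fully justified by your characteristic-polynomial argument; the appeal to Lemmas~2.5--2.8 is misplaced, since those treat only particular maximal curves rather than the general case.
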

\begin{proof}
From Proposition \ref{prop:3.7},  we have $\sigma(Q)=Q$. Let $m$ be the order of $Q$. Hence, $[k]Q$ is totally ramified in the extension $E/E^{\langle \sigma \rangle}$ for each $0\le k\le m-1$. Let $e\ge 2$ be the ramification index of $Q$ in $E/E^{\langle \sigma \rangle}$. Then we have $e=\text{ord}(\sigma)$.
Consider the Galois extension $E/E^{\langle \sigma \rangle}$, we have
\[0=2g(E)-2=[2g(E^{\langle \sigma \rangle})-2]\cdot \text{ord}(\sigma)+\deg \text{Diff}(E/E^{\langle \sigma \rangle})\ge -2e+m(e-1)\]
from the Hurwitz genus formula and Dedekind's different theorem.

{\bf Case 1:} If $Q$ is tamely ramified in the extension $E/E^{\langle \sigma \rangle}$, then the different exponent $d(Q)$ of $Q$ over $E^{\langle \sigma \rangle}$ is $e-1$ and $$1\le m\le \frac{2e}{e-1}=2+\frac{2}{e-1}\le 4.$$ Thus, we have the following cases.
\begin{itemize}
\item[(1)] If $e=2$, then $m\le 4$. Hence, the order of $G$ is upper bounded by $|G|\le 8$.
\item[(2)] If $e=3$, then $ m\le 3$. Hence, the order of $G$ is upper bounded by $|G|\le 9$.
\item[(3)] If $e\ge 4$, then $m\le 2$. 
If $m=1$, then the order of $G$ is $|G|=e\le 6$ from \cite[Theorem 3.10.1 and Appendix A]{Si86}.
Otherwise $m=2$, we have
\[0=2g(E)-2=-2e+\deg \text{Diff}(E/E^{\langle \sigma \rangle})=-2e+2(e-1)+2.\]
There must exist a rational place $R$ of $E^{\langle \sigma \rangle}$ which splits into two rational places $R_1$ and $R_2$ with ramification index $e(R_i)=2$, since
\begin{eqnarray*}
\sum_{i=1}^r d(R_i)\deg(R_i)&=&\sum_{i=1}^r(e(R_i)-1) f(R_i)\deg R\\ &=&\frac{e(R_i)-1}{e(R_i)}\sum_{i=1}^re(R_i) f(R_i)\deg R\\ &=&\frac{e(R_i)-1}{e(R_i)}\cdot e \cdot\deg(R)=2.
\end{eqnarray*}
Hence, the only possibility is $e=4$ and $|G|= 8$.
\end{itemize}

{\bf Case 2:} If $Q$ is wildly ramified in the extension $E/E^{\langle \sigma \rangle}$, then we have $d(O)=d(Q)\ge e$ and
\[0=2g(E)-2=-2e+\deg \text{Diff}(E/E^{\langle \sigma \rangle})\ge -2e+2e.\]
Hence, $m=2$ and $d(O)=d(Q)=e$. From the Hilbert's different theorem \cite[Theorem 3.8.7]{St09}, we have
$$e=d(O)=\sum_{i=0}^{\infty} (|G_i|-1)=(e-1)+(2-1),$$
where $G_i$ are the $i$-th ramification groups of $O$ in the extension $E/E^{\langle \sigma \rangle}$ for $i\ge 0$.
The above equality holds true only if char$(\F_q)=2$ and $e=2t$ with $2\nmid t$. Thus, we have the following cases.
If $j(E)\neq 0, 1728$, then $|\Aut(E,O)|=2$. Hence, $|G|\le 4$.
If $j(E)=0=1728$, then $|\Aut(E,O)|=24$. In this case, the elliptic curve $E$ can be explicitly given by $y^2+a_3y=x^3+a_4x+a_6$. Any automorphism $\rho\in \Aut(E,O)$ is given in the form $\rho(x)=u^2x+s^2$ and $\rho(y)=y+u^2sx+t$ with $u^3=1$, $s^4+a_3s+(1-u)a_4=0$ and $t^2+a_3t+s^6+a_4s^2=0$. Let $t=x/y$ be a prime element of $O$. Then we have
\begin{eqnarray*}
i(\rho):&=&\nu_O(\rho(t)-t)\\&=&\nu_O\left(\frac{u^2x+s^2}{y+u^2sx+t}-\frac{x}{y}\right)\\
  &=& \nu_O(u^2xy+s^2y+xy+u^2sx^2+tx)-\nu_O(y)-\nu_O(y+u^2sx+t)\\
  &=&  \nu_O(u^2xy+xy+u^2sx^2+s^2y+tx)+6\\
  &=& \begin{cases} 1, \text{ if } u\neq 1,\\ 2, \text{ if } u=1, s\neq 0,\\ 4, \text{ if } u=1,s=0,t\neq 0. \end{cases}
\end{eqnarray*}
From \cite[Proposition 3.5.12 and Theorem 3.8.7]{St09}, we have
\[d(O)=\sum_{1\neq \rho\in \langle \sigma \rangle}i(\rho)=e.\]
There is exactly one automorphism $\rho_1\in \langle \sigma \rangle$ with $\nu_O(\rho_1(t)-t)=2$. In fact, $\rho_1$ must be given by $\rho_1(x)=x+s^2$ and $\rho_1(y)=y+u^2sx+t$ for some $s\neq 0$. Then $\rho_1^2(x)=x, \rho_1^2(y)=y+s^3\neq y$ and $i(\rho_1^2)=2$. Hence, we have $\text{id} \neq \rho_1^2\in \langle \sigma \rangle$ which is impossible. 
From the above discussion, we have shown that the order of $G$ is at most $9$.
\end{proof}

\begin{rmk}
From \cite[Theorem 11.79]{HKT08}, the order of an abelian subgroup $G$ of the automorphism group of function field $F/\F_q$ with genus $g\ge 2$ is upper bounded by
$$|G|\le \begin{cases} 4g+4 \text{ for } p\neq 2,\\ 4g+2 \text{ for } p=2.\end{cases}$$
For a global function field with large genus, the upper bound of abelian subgroups of automorphism group $\Aut(E/\F_q)$ can be sharpened in \cite{MX19}.
In the following subsection \ref{subsec:4.4}, we will provide an explicit abelian subgroup with order $9$ in $\Aut(E/\F_q)$ involving a nontrivial automorphism in $\Aut(E,O)$ for some maximal elliptic function fields.
\end{rmk}

\section{Optimal locally repairable codes via elliptic function fields}\label{sec:4}
In this section, we provide a construction of optimal locally repairable codes via automorphism groups of elliptic function fields over finite fields by generalizing the method given in \cite{JMX20,LMX19,TB14}.

Before presenting the construction of locally repairable codes, let us discuss ramification property of Galois subfields of elliptic function fields.
\subsection{Ramification in Galois subfields of elliptic function fields}\label{subsec:3.4}
In this subsection, we shall consider the ramification information of Galois extension $E/E^{G}$.
The following result shows that there are not many ramified places for an elliptic function field $E/\F_q$.
Hence, there are many rational places of $E$ such that their restriction to $E^{G}$ are splitting completely in  $E/E^{G}$.
\begin{prop} \label{prop:3.9}
Let $E/\F_q$ be an elliptic function field defined over $\F_q$. Let $T$ be a subgroup of the translation group $T_E$ and let $A$ be a nontrivial subgroup of $\Aut(E,O)$ such that $G=T A$ is a subgroup of $\Aut(E/\F_q)$. Let $|G|=r+1$ and let $F=E^G$ be the fixed subfield of $E$ with respect to $G$. Then there are at most $r+1+2|T|$ rational places of $E$ that are ramified in $E/F$. All unramified rational places of $E$ are splitting completely in $E/F$.
\end{prop}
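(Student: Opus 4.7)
The plan is to apply the Hurwitz genus formula to the Galois extension $E/F$ and then bookkeep how the resulting total different is distributed among the $G$-orbits of rational places of $E$.

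First, I would show that $F$ is a rational function field and pin down the degree of the different. Since $A$ is nontrivial and $\alpha(O)=O$ for every $\alpha\in A\subseteq G$, the stabilizer $G_O$ contains $A$, so $O$ ramifies in $E/F$ and $\deg\text{Diff}(E/F)>0$. Plugging $g(E)=1$ and $[E:F]=r+1$ into the Hurwitz genus formula forces $g(F)=0$ and
\[\deg\text{Diff}(E/F)=2(r+1)=2|G|.\]

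Next I would observe that for every rational place $P$ of $E$, no nontrivial translation fixes $P$ (since $P\oplus Q=P$ forces $Q=O$), so $G_P\cap T=\{1\}$ and the projection $\pi:G\to A$ from Subsection~\ref{subsec:3.2} embeds $G_P$ into $A$; in particular $|G_P|$ divides $|A|$ and the residue degrees of rational places above $F$ are automatically $1$. The $G$-orbit of $O$ is exactly $\{Q\in\PP_E^1:\tau_Q\in T\}$, because $(\tau_Q\alpha)(O)=Q$; this orbit consists of $|T|$ rational places, each with stabilizer $A$, ramification index $|A|$, and different exponent at least $|A|-1$ by Dedekind's different theorem. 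Hence the orbit of $O$ already accounts for at least $|T|(|A|-1)$ of the total different.

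Every other ramified rational place $P$ satisfies $d(P)\ge e(P)-1\ge 1$ and $\deg P=1$, so contributes at least $1$ to $\deg\text{Diff}(E/F)$. Consequently the number of ramified rational places lying outside the orbit of $O$ is bounded by
\[2|G|-|T|(|A|-1)=|T||A|+|T|=(r+1)+|T|,\]
and adding the $|T|$ places in the orbit of $O$ gives at most $r+1+2|T|$ ramified rational places, as claimed.

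Finally, for the splitting statement I would take an unramified rational place $P$ of $E$ and let $\bar P=P\cap F$. In the Galois extension $E/F$ all places above $\bar P$ share the same ramification index $e=1$ and residue degree $f$, and $f=1$ since $\deg P=f\cdot\deg\bar P=1$ forces $f=\deg\bar P=1$. Hence $\bar P$ has $|G|/(ef)=|G|$ places of $E$ above it, all rational, so $\bar P$ splits completely. The main subtlety in the argument is budgeting the total $2|G|$ different correctly: one must first peel off the (relatively large) contribution of the $G$-orbit of $O$ before applying the crude inequality $d(P)\ge 1$ to count the remaining ramified rational places. Wild ramification does not threaten the bound, since it can only increase $d(O)$ and thereby shrink the remaining budget for other ramified places.
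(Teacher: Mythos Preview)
Your proof is correct and follows essentially the same route as the paper: use Hurwitz to see that $g(F)=0$ and $\deg\text{Diff}(E/F)=2(r+1)$, peel off the contribution $\ge |T|(|A|-1)$ from the $G$-orbit of $O$, and then bound the remaining ramified rational places by the leftover different budget. One small imprecision: the places in the orbit of $O$ have stabilizers \emph{conjugate} to $A$, not literally equal to $A$, but since only the order $|A|$ is used this does not affect the argument.
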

\begin{proof}
Any automorphism $\sigma$ fixes the infinity place $O$ if and only if $\sigma\in \Aut(E,O)$.  Then we have the ramification index $e(O|O\cap F)= |A|$ and the different exponent $d(O|O\cap F)\ge |A|-1$ from Dedekind's different theorem \cite[Theorem 3.5.1]{St09}.  Thus, the genus of $F$ is $0$ from Hurwitz genus formula.
From the fundamental equation for Galois extension, there are $|T|$ ramified rational places with ramification index  $|A|$ and different exponent $d\ge |A|-1$.
Assume that there are $s$ ramified rational places of $E$ in $E/F$.  Each ramified place has different exponent at least $2-1=1$, by the Hurwitz genus formula, we have
\begin{eqnarray*}
0=2g(E)-2 & \ge &  [E:F](2g(F)-2)+(|A|-1)|T|+(s-|T|) \\ &\ge& -(r+1)+s-2|T|.
\end{eqnarray*}
For any unramified rational place $P$ of $E$, the relative degree of $P$ over $P\cap F$ is one, since $f(P|P\cap F)=\deg(P)/\deg(P\cap F)=1$. Hence, all unramified rational places of $E$ are splitting completely in $E/F$. This gives the desired result.
\end{proof}

There are at least $\left\lceil\frac{N(E)-2|T|}{r+1}\right\rceil-1$ rational places of $E^G$ splitting completely in  $E/E^{G}$. For explicit function fields and subgroups of automorphism groups, the upper bound of the number of ramified rational places may be improved such that there are more rational places which are splitting completely in  $E/E^{G}$.

\subsection{A general construction via elliptic function fields}\label{subsec:4.1}
In this subsection, let us provide a general construction of locally repairable codes via automorphism groups of elliptic function fields over finite fields \cite{LMX19}.

Let  $E/\F_q$ be an elliptic function field. Let $G$ be a subgroup of automorphism group $\Aut(E/\F_q)$ of order $r+1$ and let $F=E^{G}$ be the fixed subfield of $E$ with respect to $G$. From Galois theory, $E/F$ is a Galois extension with Galois group $\Gal(E/F)=G.$
Assume rational places $Q_1,\cdots, Q_\ell$ of $F$ are splitting completely in $E/F$ for each $1\le i \le \ell$. Let $P_{i,1}, P_{i,2},\cdots, P_{i,r+1}$ be the rational places of $E$ lying over $Q_i$ and let $\mP=\{P_{i,j}: 1\le i \le \ell, 1\le j\le r+1\}$. Then the cardinality of $\mP$ is $\ell(r+1)$.

Choose a divisor $D$ of $F$ such that $\text{supp}(D) \cap \{Q_1,\cdots,Q_m\}=\emptyset$. The Riemann-Roch space $\mathcal{L}(D)=\{f\in F^*: (f)\ge -D\}\cup \{0\}$
 is a finite-dimensional vector space over $\F_q$. Let $\{z_1,\cdots, z_t\}$ be a basis of  $\mathcal{L}(D)$ over $\F_q$. Choose elements $w_i\in E$ such that $w_0=1,w_1,\cdots,w_{r-1}$ are linearly independent over $F$ and $\nu_{P_{i,j}}(w_k)\ge 0$ for all $1\le i\le \ell$, $1\le j\le r+1$ and $0\le k\le r-1$. Consider the set of functions
\begin{equation}\label{eq:12} V:=\left\{\sum_{j=1}^t a_{0j}z_j+\sum_{i=1}^{r-1} \left(\sum_{j=1}^{t-1} a_{ij}z_j\right)w_i\in E:\; a_{ij}\in \F_q \right\}.\end{equation}

\begin{lemma}\label{lem:4.1}
Let $i$ be an integer between $1$ and $\ell$, and suppose that every $r\times r$ submatrix of the matrix
$$M=\left(\begin{array}{cccc}w_0(P_{i,1}) & w_1(P_{i,1}) & \cdots & w_{r-1}(P_{i,1}) \\w_0(P_{i,2}) & w_1(P_{i,2})  & \cdots &w_{r-1}(P_{i,2})  \\\vdots & \vdots & \ddots & \vdots \\w_0(P_{i,r+1}) & w_1(P_{i,r+1})  & \cdots &w_{r-1}(P_{i,r+1})\end{array}\right)$$
is invertible. Then the value of $f\in V$ at any place in the set $\{P_{i,1}, P_{i,2},\cdots, P_{i,r+1}\}$ can be recovered from the values of $f$ at the other $r$ places.
\end{lemma}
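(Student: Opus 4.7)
The plan is to exploit the fact that in the definition of $V$, the coefficients of $w_0,\ldots,w_{r-1}$ live in the base field $F$, so they are constant on each completely-split fiber of $E/F$. Concretely, for any $f\in V$ write
\[
f=\sum_{k=0}^{r-1} h_k w_k,\qquad h_0=\sum_{j=1}^{t}a_{0j}z_j,\ h_k=\sum_{j=1}^{t-1}a_{kj}z_j\ (k\ge 1),
\]
so that each $h_k$ lies in $\mL(D)\subseteq F$. Because $\supp(D)\cap\{Q_1,\ldots,Q_\ell\}=\emptyset$, every $h_k$ is regular at $Q_i$.

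Next I would use that $Q_i$ splits completely in $E/F$: each place $P_{i,j}$ lies above $Q_i$ with $e(P_{i,j}|Q_i)=f(P_{i,j}|Q_i)=1$, so the residue class map at $P_{i,j}$, restricted to $F$, factors through the residue class map at $Q_i$. Consequently, for every $k$ and every $j\in\{1,\ldots,r+1\}$,
\[
h_k(P_{i,j})=h_k(Q_i)=:c_k,
\]
independently of $j$. Since $\nu_{P_{i,j}}(w_k)\ge 0$, the value $f(P_{i,j})$ is well defined, and the displayed decomposition yields
\[
\bigl(f(P_{i,1}),\ldots,f(P_{i,r+1})\bigr)^{T} \;=\; M\,(c_0,c_1,\ldots,c_{r-1})^{T}.
\]

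The remaining step is pure linear algebra. The hypothesis that every $r\times r$ submatrix of $M$ is invertible forces $\mathrm{rank}\,M=r$, so the left kernel of $M$ is $1$-dimensional, spanned by some $\mathbf{v}=(v_1,\ldots,v_{r+1})$; moreover, if any coordinate $v_s$ vanished, then $\mathbf{v}$ would lie in the left kernel of the $r\times r$ submatrix obtained by deleting row $s$, contradicting its invertibility. Hence every $v_s\neq 0$, and we obtain the single linear recovery relation
\[
\sum_{j=1}^{r+1} v_j\,f(P_{i,j})=0,\qquad\text{so}\qquad f(P_{i,s})=-\frac{1}{v_s}\sum_{j\neq s} v_j\,f(P_{i,j}),
\]
which reconstructs the value at $P_{i,s}$ from the other $r$ values.

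There is no real obstacle here: the argument is essentially bookkeeping once one notices that functions in $F$ are \emph{fiber-constant} on completely split places, reducing the problem to recognising the column span of $M$. The only point worth care is the deduction that all coordinates of the null vector are nonzero, which is precisely where the "every $r\times r$ submatrix invertible" hypothesis is used (a weaker nonsingularity condition on $M$ would not suffice to guarantee repair at every single $P_{i,s}$).
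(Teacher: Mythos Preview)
Your proof is correct. The paper does not give its own argument here; it simply refers to \cite[Proposition~2]{BHHMV17} and \cite[Proposition~18]{LMX19}. Your write-up matches the idea underlying those references: the key observation is that each coefficient $h_k\in F$ is constant along the fiber $\{P_{i,1},\dots,P_{i,r+1}\}$ over $Q_i$, so the vector of values $(f(P_{i,j}))_j$ lies in the column space of $M$. The references typically finish by inverting the $r\times r$ submatrix obtained from the $r$ known rows to recover $(c_0,\dots,c_{r-1})$ and then evaluating at the missing place; your dual formulation via a left null vector $\mathbf{v}$ with all coordinates nonzero is an equivalent (and slightly cleaner) way to package the same linear algebra, yielding a single parity-check relation valid for every erasure position. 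One cosmetic point: when you say ``$\mathbf{v}$ would lie in the left kernel of the $r\times r$ submatrix,'' you of course mean the vector obtained from $\mathbf{v}$ by deleting its $s$-th (zero) coordinate.
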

\begin{proof}
Please refer to \cite[Proposition 2]{BHHMV17} or \cite[Proposition 18]{LMX19}.
\end{proof}

\begin{lemma}\label{lem:4.2}
Let $\mP$ and $V$ be defined as above and satisfy the assumption of Lemma {\rm \ref{lem:4.1}}. If $V$ is contained in $\mL(D^\prime)$ for a divisor $D^\prime$ of $E$ with $\deg(D^\prime)< \ell (r+1)$ and $\supp(D^\prime)\cap\{P_{i,1},\dots,P_{i,r+1}\}_{i=1}^{\ell}=\emptyset$, then the algebraic geometry code
$$C(\mP,V)=\{(f(P))_{P\in \mP}: f\in V\}$$ is a $q$-ary $[n,k,d]$-locally repairable code with locality $r$, length $n=\ell(r+1)$, dimension $k=rt-(r-1)$ and minimum distance $d\ge n-\deg(D^\prime)$.
\end{lemma}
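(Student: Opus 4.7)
\medskip
\noindent\textbf{Proof proposal.} The plan is to check the four claimed parameters $n$, $r$, $k$ and $d$ in order, with most of the work concentrated in the dimension and distance claims; the locality claim is a direct application of Lemma \ref{lem:4.1}.

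The length statement $n=\ell(r+1)$ is immediate, since by construction $|\mP|=\ell(r+1)$ and the rational places $P_{i,j}$ are pairwise distinct (each $Q_i$ splits completely in $E/F$, and the $Q_i$ are distinct). For locality, I would fix an index $(i,j)$ and apply Lemma \ref{lem:4.1} to the block $\{P_{i,1},\ldots,P_{i,r+1}\}$: every $f\in V$ is an $F$-linear combination of $w_0,\ldots,w_{r-1}$ whose coefficients lie in $\mL(D)$, hence are regular at $Q_i$, and thus $f(P_{i,j})$ is determined by the same $F$-linear combination specialised at the $P_{i,j}$; the invertibility hypothesis on every $r\times r$ submatrix of $M$ lets one reconstruct $f(P_{i,j})$ from the remaining $r$ evaluations.

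For the dimension, I would first show that the $rt-(r-1)$ generators $\{z_j\}_{j=1}^t\cup\{z_j w_i\}_{1\le j\le t-1,\,1\le i\le r-1}$ of $V$ are $\F_q$-linearly independent. Writing a vanishing relation as $\sum_{i=0}^{r-1}\bigl(\sum_j a_{ij}z_j\bigr)w_i=0$ and using that $w_0,\ldots,w_{r-1}$ are $F$-linearly independent forces each inner sum to vanish in $F$; then $\F_q$-linear independence of $z_1,\ldots,z_t$ in $\mL(D)$ gives $a_{ij}=0$. Hence $\dim_{\F_q}V=t+(r-1)(t-1)=rt-(r-1)$. Next I would show that the evaluation map $V\to \F_q^n$, $f\mapsto (f(P))_{P\in\mP}$, is injective. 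Suppose $f\in V$ is nonzero and $f(P)=0$ for all $P\in\mP$; because $V\subseteq \mL(D')$ with $\supp(D')\cap\mP=\emptyset$, we would obtain $f\in \mL\bigl(D'-\sum_{P\in\mP}P\bigr)$, a Riemann-Roch space of divisor degree $\deg(D')-n<0$, forcing $f=0$, a contradiction. Therefore $\dim C(\mP,V)=\dim_{\F_q}V=rt-(r-1)$.

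The distance bound follows by the same Riemann-Roch counting. For $0\ne f\in V$, let $I=\{P\in\mP:f(P)=0\}$; then $f\in \mL\bigl(D'-\sum_{P\in I}P\bigr)$, so $\deg(D')-|I|\ge 0$, i.e.\ $|I|\le \deg(D')$. Thus the Hamming weight of $(f(P))_{P\in\mP}$ is at least $n-\deg(D')$, giving $d\ge n-\deg(D')$. The only subtle point, and the place I would be most careful, is the linear independence argument that underlies both the dimension count and the injectivity of evaluation: it crucially uses that the coefficients $\sum_j a_{ij}z_j$ lie in $F$ while the $w_i$ are $F$-linearly independent in $E$, so the two ``layers'' of the construction do not interfere. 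Once this is established, both the dimension and distance claims reduce to the standard Riemann-Roch argument for modified algebraic geometry codes recalled in Section \ref{sec:2}.
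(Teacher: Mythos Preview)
Your proof is correct and follows exactly the standard line of argument: locality via Lemma~\ref{lem:4.1}, dimension via $F$-linear independence of the $w_i$ together with $\F_q$-linear independence of the $z_j$, and injectivity of evaluation plus the distance bound via the usual Riemann--Roch degree count using $V\subseteq\mL(D')$. The paper itself gives no details here beyond invoking Lemma~\ref{lem:4.1} for the locality and referring to \cite[Proposition~19]{LMX19} for the rest, so your write-up in fact supplies the argument the paper only cites.
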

\begin{proof}
The locality property follows from Lemma \ref{lem:4.1}. For more details, please refer to \cite[Proposition 19]{LMX19}.
\end{proof}

By considering subgroups of the automorphism group $\Aut(E/\F_q)$, we can choose a vector space $V$ and a set of rational places $\mP$ such that the assumption of Lemma \ref{lem:4.1} is satisfied. The following result is a generalization of \cite[Proposition 20]{LMX19}.

\begin{prop}\label{prop:4.3}
Let $E/\F_q$ be an elliptic function field. Let $T$ be a subgroup of the translation group $T_E$ and let $A$ be a nontrivial subgroup of $\Aut(E,O)$ such that $G=TA$ is a subgroup of $\Aut(E/\F_q)$. Let $|G|=(r+1)\le q$ and $|A|\ge 2$. Let $F=E^G$ and let $P$ be a rational place of $E$ such that $P\cap F$ is splitting completely into $\{P_1=P,P_2,\cdots, P_{r+1}\}$ in the extension $E/F$. Then
\begin{itemize}
\item[{\rm (i)}] there exists an element  $z\in E$ satisfying that $F=\F_q(z)$ and $(z)_\infty=\sum_{i=1}^{r+1}P_i$;
\item[{\rm (ii)}] there exist elements $w_i\in E$ with $(w_i)_\infty=P_1+P_2+\cdots+P_{i+1}$ for $0\le i\le r-1$  such that they are linearly independent over $F$;
\item[{\rm (iii)}] let  $\{P_{i,1},P_{i,2},\cdots,P_{i,r+1}\}$ be the pairwise distinct rational places lying over the rational place $Q_i$ of $F$ for each $1\le i\le \ell$, such that
$\{P_{i,1},P_{i,2},\cdots,P_{i,r+1}\}_{i=1}^l \cap \{P_{1},P_{2},\cdots,P_{r+1}\}=\emptyset.$
Then every $r\times r$ submatrix of the matrix
$$M=\left(\begin{array}{cccc}1 & w_1(P_{i,1}) & \cdots & w_{r-1}(P_{i,1}) \\1 & w_1(P_{i,2})  & \cdots &w_{r-1}(P_{i,2})  \\\vdots & \vdots & \ddots & \vdots \\1& w_1(P_{i,r+1})  & \cdots &w_{r-1}(P_{i,r+1})\end{array}\right)$$
is invertible for all $1\le i\le \ell$.
\end{itemize}
\end{prop}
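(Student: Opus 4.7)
I will handle the three parts in order, exploiting throughout that the fixed field $F=E^{G}$ is a rational function field. This was already established in the proof of Proposition~\ref{prop:3.9}: the nontriviality of $A$ forces ramification at $O$, so Hurwitz yields $g(F)=0$, and $F$ contains the rational place $O\cap F$.

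For (i), set $Q=P\cap F$. By Riemann--Roch in $F$ there is a nonconstant $z\in \mathcal{L}(Q)$ with $(z)_\infty=Q$ in $F$; since $z$ has a single simple pole at a rational place, $F=\F_q(z)$. Pulling $z$ up to $E$, the assumption that $Q$ splits completely gives $(z)_\infty=\sum_{s=1}^{r+1}P_s$.

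For (ii), take $w_0=1$. For $1\le i\le r-1$, Riemann--Roch on the elliptic curve gives $\dim_{\F_q}\mathcal{L}(P_1+\cdots+P_{i+1})=i+1$, while the $i+1$ codimension-one subspaces $\mathcal{L}(P_1+\cdots+P_{i+1}-P_j)$ each have dimension $i$. Their union has size at most $(i+1)q^i<q^{i+1}$ because $q\ge r+1>i+1$, so I can pick $w_i\in \mathcal{L}(P_1+\cdots+P_{i+1})$ avoiding all of them, achieving $(w_i)_\infty=P_1+\cdots+P_{i+1}$ exactly. For $F$-linear independence, suppose $\sum f_iw_i=0$ with some $f_i\neq 0$; scaling by a power of $z$ (using $\nu_Q(z)=-1$ and $z\in F$) normalizes $\min_i\nu_Q(f_i)=0$. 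Let $i_0=\max\{i:f_i\neq 0,\ \nu_Q(f_i)=0\}$. If $i_0\ge 1$, compute $\nu_{P_{i_0+1}}$, which restricts to $\nu_Q$ on $F$ since $E/F$ is unramified at $P_{i_0+1}$: the term $f_{i_0}w_{i_0}$ has valuation $-1$, whereas for $i<i_0$ we have $\nu_{P_{i_0+1}}(w_i)\ge 0$, and for $i>i_0$ we have $\nu_Q(f_i)\ge 1$ together with $\nu_{P_{i_0+1}}(w_i)=-1$, so every other term has valuation $\ge 0$. If $i_0=0$, evaluate at $P_{r+1}$ instead, where no $w_i$ has a pole: $f_0(Q)\neq 0$ while $f_iw_i(P_{r+1})=0$ for $i\ge 1$. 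Either way a contradiction.

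For (iii), the heart of the proof, suppose the $r\times r$ submatrix obtained by deleting row $t$ is singular; then there exist $(a_0,\dots,a_{r-1})\neq 0$ with $f:=\sum_k a_kw_k$ vanishing at every $P_{i,s}$ with $s\neq t$. The pole constraints on the $w_k$ give $f\in\mathcal{L}(D^*)$ where $D^*=P_1+\cdots+P_r-\sum_{s\neq t}P_{i,s}$ has degree zero. On an elliptic function field, $\dim\mathcal{L}(D^*)\ge 1$ forces $D^*\sim 0$; so either $f=0$ or, translating principality into the group law on $\PP_E^1$ via $\PP_E^1\cong \mathrm{Cl}^0(E)$,
\[
\bigoplus_{s\neq t}P_{i,s}=\bigoplus_{j=1}^r P_j.
\]
Combining with the conorm identity $\bigoplus_{s=1}^{r+1}P_{i,s}=\bigoplus_{j=1}^{r+1}P_j$ (both $Q$ and $Q_i$ are rational places of the rational function field $F$, hence linearly equivalent, and conorms of equivalent divisors are equivalent) and cancelling yields $P_{i,t}=P_{r+1}$, contradicting the hypothesis $\{P_1,\dots,P_{r+1}\}\cap\{P_{i,1},\dots,P_{i,r+1}\}=\emptyset$. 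Hence $f=0$, and the $\F_q$-linear independence following from (ii) forces $a_k=0$ for all $k$.

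The main obstacle is the translation in (iii) from the existence of a nonzero $f\in\mathcal{L}(D^*)$ of degree-zero divisor to a single identity in the abelian group $\PP_E^1$, and the cancellation with the conorm identity that pinpoints $P_{i,t}=P_{r+1}$; once the isomorphism $\PP_E^1\cong \mathrm{Cl}^0(E)$ and the rationality of $F$ are in place, the rest is bookkeeping.
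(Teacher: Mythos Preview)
Your proof is correct and reaches the same contradiction as the paper, but by a noticeably different and somewhat more conceptual route in parts (i) and (iii). For (i), the paper builds $z$ in two steps: it first produces an explicit $z_0\in E^{A}$ with $(z_0)=|A|\,O-\sum_{j\le |A|}P_j$ using the intermediate rational field $E^{A}$, and then sets $z=\prod_{\tau\in T}\tau^{-1}(z_0)$ to push it down to $E^{G}$, obtaining the extra information $(z)_0=|A|\sum_{\tau\in T}\tau(O)$. You bypass the tower entirely and pull a generator of $F$ out of $\mathcal{L}(Q)$ by Riemann--Roch on the rational field $F$; this is shorter and fully adequate for the statement of (i), though it does not locate the zero divisor of $z$ at the $G$-orbit of $O$ (a feature the paper exploits later in Proposition~\ref{prop:4.4}). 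For (iii), the paper unwinds the explicit factorization of $z$ through $z_0$ to exhibit a function with divisor $P_{r+1}-P_{i,r+1}$ and invoke Lemma~\ref{lem:2.1}. You instead observe that any two rational places of the rational field $F$ are linearly equivalent, pull this up via the conorm to get $\bigoplus_{s}P_{i,s}=\bigoplus_{j}P_j$ in $\PP_E^1$, and cancel against the relation coming from $D^*\sim 0$ to force $P_{i,t}=P_{r+1}$. This is the same endgame reached more cleanly, at the cost of being less explicit about which function realizes the equivalence. Part (ii) is essentially identical to the paper's argument, with your valuation computation at $P_{i_0+1}$ spelling out what the paper abbreviates as the ``strict triangle inequality.''
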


\begin{proof}
(i) Since $P\cap F$ is splitting completely in $E/F$, $P\cap E^{A}$ is a rational place of $E^{A}$ and splitting completely in $E$. Let $\Gs_1=1, \Gs_2, \cdots, \Gs_{|A|}$ be the automorphisms of $E$ with the pairwise distinct places $\Gs_i(P)=P_i$. As we have assumed that $|A|\ge 2$, then $O$ is ramified in $E/E^{A}$  and the genus of $E^{A}$ is $0$ from Hurwitz genus formula \cite[Lemma 6]{LMX19}. Hence, there exists an element $z_0\in E^{A}$ such that $(z_0)=O\cap E^{A}- P\cap E^{A}$ as a divisor of $E^{A}$ and the principal divisor of $z_0$ in $E$ is
$$(z_0)=|A|O-P_1-P_2-\cdots-P_{|A|}.$$
For any automorphism $\tau\in T$, then we have $(z_0)^{\tau}=|A|\tau O-\tau(P_1)-\tau(P_2)-\cdots-\tau(P_{|A|}).$
Hence, we have \[ \sum_{\tau\in T} (z_0)^{\tau}=|A|\sum_{\tau\in T} \tau O-\sum_{\tau\in T} \sum_{j=1}^{|A|}\tau(P_j)=|A|\sum_{\tau\in T} \tau O-\sum_{\sigma\in G} \sigma(P)=|A|\sum_{\tau\in T} \tau O-\sum_{j=1}^{r+1}P_j.\]
From \cite[Lemma 11.4]{HKT08}, we have $(z_0)^{\tau}=(\tau^{-1}(z_0))$. Thus, there exists an element $z=\prod_{\tau\in T}\tau^{-1}(z_0) \in E^G$ such that
\[(z)=|A|\sum_{\tau\in T}\tau O-P_1-P_2-\cdots-P_{r+1}.\]
It is easy to see that $r+1=|G|=[E:E^G]\le [E:\F_q(z)]=\deg(z)_\infty=r+1$ from \cite[Theorem 1.4.11]{St09}. Hence, we obtain $E^G=\F_q(z).$

(ii)  As the dimension of Riemann-Roch space $\mL(P_1+P_2)$ is $\dim_{\F_q} \mL(P_1+P_2)=\deg(P_1+P_2)-g(E)+1=2$ from the Riemann-Roch Theorem, there exists an element $w_1\in \mL(P_1+P_2)\setminus \F_q$ such that $(w_1)_{\infty}=P_1+P_2$.
For each $2\le i\le r-1$, the set $\cup_{j=1}^{i+1}\mL(\sum_{u=1}^{i+1}P_u-P_j)$ has size at most $(i+1)q^{i}$ which is less than $q^{i+1}=|\mL(\sum_{u=1}^{i+1}P_u)|$. Hence, there exists an element $w_i\in E$ such that $(w_i)_{\infty}=P_1+P_2+\cdots+P_{i+1}$ for each $1\le i\le r-1$. Moreover, it is easy to verify that $w_0=1, w_1, \cdots, w_{r-1}$ are linearly independent over $\F_q(z)$ from the strictly triangle inequality \cite[Lemma 1.1.11]{St09} or  the proof of \cite[Proposition 20]{LMX19}.

(iii) If the pairwise distinct places $P_{i,1},\cdots,P_{i,r+1}$ lie over the same rational place $z-\Gb_i$ of $F$ for some $\Gb_i\in\F_q^*$, we claim
every $r\times r$ submatrix of the matrix
$$M=\left(\begin{array}{cccc}1 & w_1(P_{i,1}) & \cdots & w_{r-1}(P_{i,1}) \\1 & w_1(P_{i,2})  & \cdots &w_{r-1}(P_{i,2})  \\\vdots & \vdots & \ddots & \vdots \\1& w_1(P_{i,r+1})  & \cdots &w_{r-1}(P_{i,r+1})\end{array}\right)$$
is invertible. Without loss of generality, we consider the first $r$ rows. Suppose that
$$\text{det}\left(\begin{array}{cccc}1 & w_1(P_{i,1}) & \cdots & w_{r-1}(P_{i,1}) \\1 & w_1(P_{i,2})  & \cdots &w_{r-1}(P_{i,2})  \\\vdots & \vdots & \ddots & \vdots \\1& w_1(P_{i,r})  & \cdots &w_{r-1}(P_{i,r})\end{array}\right)=\bo.$$
Then there exists $(c_0,\cdots,c_{r-1})\in \F_q^r\setminus\{\bo\}$  such that
$$\left(\begin{array}{cccc}1 & w_1(P_{i,1}) & \cdots & w_{r-1}(P_{i,1}) \\1 & w_1(P_{i,2})  & \cdots &w_{r-1}(P_{i,2})  \\\vdots & \vdots & \ddots & \vdots \\1& w_1(P_{i,r})  & \cdots &w_{r-1}(P_{i,r})\end{array}\right) \left(\begin{array}{c} c_0 \\  c_1\\  \vdots \\ c_{r-1}\end{array} \right)=\bo.$$
Then we have $(c_0+c_1w_1+\cdots+c_{r-1}w_{r-1})(P_{i,j})=0$ for all $1\le j\le r$ and hence $c_0+c_1w_1+\cdots+c_{r-1}w_{r-1}\in \mL(P_1+\cdots+P_r-P_{i,1}-\cdots-P_{i,r})$. Thus, the principal divisor of  $c_0+c_1w_1+\cdots+c_{r-1}w_{r-1}$ is
$$(c_0+c_1w_1+\cdots+c_{r-1}w_{r-1})=\sum_{j=1}^r P_{i,j}-\sum_{j=1}^r P_{j}.$$
As the places $P_{i,1},\cdots,P_{i,r}$ lie over the same rational place of $z-\Gb_i$, the equation
$$z=\prod_{\tau\in T_0}\tau^{-1}(z_0)=\frac{h(z_0)}{g(z_0)}\equiv \Gb_i \ (\text{mod } P_{z-\Gb_i})$$
has $|T|$ distinct nonzero roots $z_0=\Ga_{i,1}, \Ga_{i,2},\cdots, \Ga_{i,|T|}$. After rearranging the order of places $P_{i,j}$ for $1\le j\le r+1$, we may assume that
$$(z_0^{-1}-\Ga_{i,j}^{-1})=P_{i,(j-1)|A|+1}+\cdots+P_{i,j|A|}-|A|O \text{ for } 1\le j\le |T|.$$
Thus, we have $$P_{r+1}-P_{i,r+1}=\Big{(}(c_0+c_1w_1+\cdots+c_{r-1}w_{r-1})\cdot z^{-1} \cdot \prod_{j=1}^{|T|}\frac{1}{z_0^{-1}-\Ga_{i,j}^{-1}}\Big{)}.$$
This is a contradiction by Lemma \ref{lem:2.1}.
\end{proof}

With the above preparation, we can obtain the following explicit construction of optimal locally repairable codes from elliptic function fields over finite fields.

\begin{prop}\label{prop:4.4}
Let $E/\F_q$ be an elliptic function field with $N(E)$ rational places. Let $T$ be a subgroup of the translation group $T_E$ and let $A$ be a nontrivial subgroup of $\Aut(E,O)$ such that $G=TA$ is a subgroup of $\Aut(E/\F_q)$. Let $|G|=|T|\cdot |A|=(r+1)$ and $|A|\ge 2$.
Let $F=E^G$ be the fixed subfield of $E$ with respect to $G$. Then there exists an optimal $q$-ary $[n=m(r+1),k=rt-r+1, d=n-(t-1)(r+1)]$ locally repairable code with locality $r$ for any $1\le t \le m\le \ell= \left\lceil\frac{N(E)-2|T|}{r+1}\right\rceil -1$.
\end{prop}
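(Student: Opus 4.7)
The plan is to apply Lemma \ref{lem:4.1}, Lemma \ref{lem:4.2}, and Proposition \ref{prop:4.3} to a carefully chosen divisor $D$ on the rational function field $F=E^G$. First, by Proposition \ref{prop:3.9}, at most $r+1+2|T|$ rational places of $E$ are ramified in $E/F$, and every unramified rational place of $E$ splits completely; dividing by $r+1$ yields at least $\ell+1$ rational places of $F$ whose fibres in $E$ consist of $r+1$ distinct rational unramified places. I would reserve one such place to play the role of $P_\infty^F := P\cap F$ in Proposition \ref{prop:4.3}, with fibre $\{P_1,\ldots,P_{r+1}\}$, and pick $m\le\ell$ of the remaining ones as evaluation centres $Q_1,\ldots,Q_m$, with fibres $\{P_{i,1},\ldots,P_{i,r+1}\}$. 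Set $\mP=\bigcup_{i=1}^m\{P_{i,j}\}$, so $|\mP|=n=m(r+1)$.

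Next, apply Proposition \ref{prop:4.3} to obtain $z\in F$ with $F=\F_q(z)$ and $(z)_\infty = \sum_{k=1}^{r+1} P_k$, together with $w_0=1,w_1,\ldots,w_{r-1}\in E$ that are $F$-linearly independent with $(w_i)_\infty = P_1+\cdots+P_{i+1}$ and for which every $r\times r$ submatrix of the evaluation matrix $\bigl(w_k(P_{i,j})\bigr)$ is invertible. Since $F$ has genus $0$, the divisor $D := (t-1)P_\infty^F$ on $F$ has $\dim_{\F_q}\mL(D)=t$ with basis $\{1,z,\ldots,z^{t-1}\}$, and $\supp(D)\cap\{Q_1,\ldots,Q_m\}=\emptyset$. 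Assemble the $\F_q$-space $V$ exactly as in \eqref{eq:12} using this basis together with $w_0,\ldots,w_{r-1}$.

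Two technical points then need verification to feed $V$ into Lemma \ref{lem:4.2}. First, a direct pole-order count at each $P_k$ (noting that $z^{j-1}$ has pole of order $j-1\le t-1$, while $z^{j-1}w_i$ for $1\le j\le t-1$ and $1\le i\le r-1$ has pole of order at most $(t-2)+1=t-1$ at $P_k$) shows $V\subseteq \mL(D')$, where $D' := (t-1)\sum_{k=1}^{r+1}P_k$ satisfies $\deg D' = (t-1)(r+1)<n=m(r+1)$ and $\supp(D')\cap\mP=\emptyset$. Second, the $F$-linear independence of $\{1,w_1,\ldots,w_{r-1}\}$ combined with the $\F_q$-linear independence of $\{1,z,\ldots,z^{t-1}\}$ inside $\mL(D)$ yields $\dim_{\F_q}V = t+(r-1)(t-1)=rt-r+1$. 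The local recovery hypothesis of Lemma \ref{lem:4.1} holds for every fibre by Proposition \ref{prop:4.3}(iii), so Lemma \ref{lem:4.2} produces a $q$-ary LRC of length $n=m(r+1)$, dimension $k=rt-r+1$, locality $r$, and minimum distance $d\ge n-(t-1)(r+1)=(m-t+1)(r+1)$.

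For optimality, compute $\lceil k/r\rceil = \lceil(r(t-1)+1)/r\rceil = t$, so the Singleton-type bound \eqref{eq:x1} evaluates to $n-k-t+2 = m(r+1)-(rt-r+1)-t+2 = (m-t+1)(r+1)$, matching the lower bound on $d$ exactly and therefore confirming optimality. The most delicate piece of the argument is the pole-order bookkeeping that certifies $V\subseteq \mL(D')$ with $\deg D'<n$; the truly hard point, namely the invertibility of every $r\times r$ submatrix of the evaluation matrix, is already packaged inside Proposition \ref{prop:4.3}(iii), and the place count enabling the choice of $P_\infty^F$ together with $Q_1,\ldots,Q_m$ is furnished by Proposition \ref{prop:3.9}.
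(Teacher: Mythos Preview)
Your argument is correct for $m\le\ell-1$ but falls one short of the stated range $m\le\ell$. The place count is off by one: from Proposition~\ref{prop:3.9} you get at least $N(E)-(r+1+2|T|)$ unramified rational places of $E$, and since these come in fibres of size $r+1$, the number of completely split rational places of $F$ is at least $\bigl\lceil \frac{N(E)-2|T|}{r+1}-1\bigr\rceil=\bigl\lceil \frac{N(E)-2|T|}{r+1}\bigr\rceil-1=\ell$, not $\ell+1$. After reserving one fibre for the poles $\{P_1,\dots,P_{r+1}\}$ of $z$, only $\ell-1$ fibres remain for evaluation, so your construction yields only $m\le\ell-1$.

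The paper closes this gap by a genuinely additional argument: it uses the \emph{modified} algebraic-geometry code to include the pole fibre $\{P_1,\dots,P_{r+1}\}$ itself among the evaluation points (via the local factors $z^{1-t}$), thereby reaching $m=\ell$. This requires a separate verification of the local-recovery property at the poles, namely that every $r\times r$ submatrix of the matrix $\bigl((w_j/z)(P_k)\bigr)_{k,j}$ is invertible. That verification is not covered by Proposition~\ref{prop:4.3}(iii) and is carried out in the paper by a divisor argument ending in an appeal to Lemma~\ref{lem:2.1}; you would need to supply this piece (or an equivalent) to obtain the full statement.
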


\begin{proof}  By Proposition \ref{prop:3.9}, there are at most $r+1+2|T|$ rational places that are ramified in $E/F$ and all unramified rational places of $E$ are splitting completely in $E/F$. Hence, there are $\ell-1$ sets $\{P_{i,1},\dots,P_{i,r+1}\}_{i=1}^{\ell-1}$ that do not intersect with ramified points and $r+1$ poles of $z$. Put $z_i=z^{i-1}$ for $i=1,2,\dots,t$ and consider the set $V$ of functions given in \eqref{eq:12}.
Then $V$ is a subspace of $\mL((t-1)(P_1+\cdots+P_{r+1})),$ where $P_1,P_2,\dots,P_{r+1}$ are $r+1$ pole places of $z$ given in the proof of Proposition \ref{prop:4.3}.
Let $1\le t\le m\le \ell -1$ and $\mP=\cup_{i=1}^m \{P_{i,1},\dots,P_{i,r+1}\}$. By Lemma \ref{lem:4.2}, the algebraic geometry code
$C(\mP,V)=\{(f(P))_{P\in \mP}: f\in V\}$ is an $[n=m(r+1),k=rt-r+1,d\ge n-(t-1)(r+1)]$ locally repairable codes with locality $r$. On the other hand, by the Singleton-type bound \eqref{eq:x1},
\[d\le n-k- \left\lceil \frac kr\right\rceil+2=n-rt+r-1-\left\lceil \frac {tr-r+1}r\right\rceil+2=n-(t-1)(r+1).\]
Hence, the code $C(\mP,V)$ is optimal.

Using the modified algebraic geometry codes, we can include the poles of $z$ in the set of evaluation points.
The modified algebraic geometry code defined by
$C=\{(f(P_{1,1}), \cdots, f(P_{1,r+1}), \cdots, f(P_{\ell-1,1}), \cdots, f(P_{\ell-1,r+1}), (z^{1-t}f)(P_1),\cdots,(z^{1-t}f)(P_{r+1}))\\  :  f\in V\}$
is an optimal  $[n=\ell (r+1),k=rt-r+1,d= n-(t-1)(r+1)]$ locally repairable code with locality $r$.
It remains to prove that the locality property holds true at rational places $P_i$ for $1\le i\le r+1$.
Let $f=\sum_{i=0}^{r-1}f_i(z)w_i=\sum_{j=0}^{t-1}a_{0,j}z^j+\sum_{j=0}^{t-2}a_{1,j}z^jw_1+\cdots+\sum_{j=0}^{t-2}a_{r-1,j}z^jw_{r-1}\in V$. Then we have
\[(z^{1-t}f)(P_i)=a_{0,t}+a_{1,t-1}\left(\frac{w_1}{z}\right)(P_i)+\cdots+a_{r-1,t-1}\left(\frac{w_{r-1}}{z}\right)(P_i).\]
It will be sufficient to prove that any $r\times r$ submatrix of matrix
$$M=\left(\begin{array}{cccc}1 & \left(\frac{w_1}{z}\right)(P_1) & \cdots & \left(\frac{w_{r-1}}{z}\right)(P_1) \\1 & \left(\frac{w_1}{z}\right)(P_2) & \cdots & \left(\frac{w_{r-1}}{z}\right)(P_2) \\\vdots & \vdots & \ddots & \vdots \\ 1& \left(\frac{w_1}{z}\right)(P_{r+1})  & \cdots & \left(\frac{w_{r-1}}{z}\right)(P_{r+1})\end{array}\right)$$
is invertible. Suppose that the $1,\cdots,i_0-1,i_0+1,\cdots,r+1$ rows are linearly dependent. Let $A$ be the matrix which is obtained from $M$ by deleting the $i_0$-th row. Then there exists an vector $(c_0,c_1,\cdots,c_{r-1})\in \F_q^r\setminus \{\bo\}$ such that $A(c_0,c_1,\cdots,c_{r-1})^T=\bo$.
That is to say that \[\left(c_0+c_1\frac{w_1}{z}+\cdots+c_{r-1}\frac{w_{r-1}}{z}\right)(P_i)=0 \text{ for } 1\le i\neq i_0\le r+1.\]
It is easy to verify that \[c_0+c_1\frac{w_1}{z}+\cdots+c_{r-1}\frac{w_{r-1}}{z}\in \mL\left(|A|\sum_{\tau\in T} \tau(O)+P_{i_0}-\sum_{i=1}^{r+1}P_i\right).\]
Hence, the principal divisor of $h=c_0+c_1\frac{w_1}{z}+\cdots+c_{r-1}\frac{w_{r-1}}{z}$ is given by
\[(h)=\sum_{i=1}^{r+1}P_i-P_{i_0}-|A|\sum_{\tau\in T} \tau(O)+P\] for some rational place $P$ of $E$.
Furthermore, we have
$$(z)=|A|\sum_{\tau\in T} \tau(O)-\sum_{j=1}^{r+1}P_j.$$
It follows that the principal divisor of $hz$ is \[(hz)=(c_0z+c_1w_1+\cdots+c_{r-1}w_{r-1})=P-P_{i_0}.\]
If $P\neq P_{i_0}$, then this is impossible for elliptic function fields from Lemma \ref{lem:2.1}.
Otherwise $P=P_{i_0}$, then $c_0z+c_1w_1+\cdots+c_{r-1}w_{r-1}=a\in \F_q^*.$ This contradicts to the fact that $1,w_1,\cdots, w_{r-1}$ are linearly independent over $\F_q(z)$.
\end{proof}

\subsection{Locality $r=2|T|-1$ with $|A|=2$}\label{subsec:4.2}
From Theorem 3.10.1 and Appendix A in \cite{Si86}, there is an automorphism $\Gs\in \Aut(E,O)$ of the elliptic function field $E/\F_q$ with order $2$ which can be given by $\sigma(x)=x$ and
$$\sigma(y)=\begin{cases}
-y,  & \text{char}(\F_q)\neq 2,\\
y+1, &  \text{char}(\F_q)=2 \text{ and } j(E)=0,\\
y+x, & \text{char}(\F_q)=2 \text{ and } j(E)\neq 0.
\end{cases}$$

\begin{prop}\label{prop:4.5}
Let $E/\F_q$ be an elliptic function field over $\F_q$.
Let $T$ be any subgroup of the translation group $T_E$ and let $A$ be a cyclic subgroup of $\Aut(E,O)$ generated by an automorphism $\sigma$ defined as above.
Then $TA$ is a subgroup of $\Aut(E/\F_q)$ with order $2|T|$.
\end{prop}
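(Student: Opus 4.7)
The plan is to reduce to Theorem \ref{thm:3.4}: since $A=\langle\sigma\rangle$ has order $2$, we only need to verify that $\tau_{\sigma(Q)}\in T$ for every $\tau_Q\in T$. The heart of the argument will be to identify $\sigma$, in each of the three cases listed, with the negation map $[-1]$ on the group of rational places, because then $\sigma(Q)=\ominus Q$ and hence $\tau_{\sigma(Q)}=\tau_Q^{-1}\in T$ automatically for any subgroup $T\le T_E$.

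To identify $\sigma$ with $[-1]$, I would argue as follows. Because $\sigma$ fixes the function $x$, it permutes the fibres of the degree-$2$ map $x:E\to \F_q(x)$; over a generic place these fibres are exactly the pairs $\{P,\ominus P\}$, so on rational places $\sigma$ acts either as the identity or as negation. Since $\sigma$ is nontrivial of order $2$, it must be negation. Concretely, in a general Weierstrass model \eqref{eq:3} the hyperelliptic involution is $(x,y)\mapsto(x,\,-y-a_1x-a_3)$; specializing to the three standard normal forms gives exactly the three rules
\[
\sigma(y)=-y,\qquad \sigma(y)=y+1,\qquad \sigma(y)=y+x,
\]
corresponding respectively to char$(\F_q)\neq 2$ (with $a_1=a_3=0$), char$(\F_q)=2$ and $j(E)=0$ (where the normal form has $a_1=0,\ a_3=1$), and char$(\F_q)=2$ and $j(E)\neq 0$ (where $a_1=1,\ a_3=0$). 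Hence in every listed case, $\sigma=[-1]$ on $\mathbb{P}^1_E$.

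With this in hand, the verification of Theorem \ref{thm:3.4} is immediate: for any $\tau_Q\in T$, $\tau_{\sigma^{-1}(Q)}=\tau_{\sigma(Q)}=\tau_{\ominus Q}=\tau_Q^{-1}$, which lies in $T$ because $T$ is a subgroup of $T_E$. Therefore $TA$ is a subgroup of $\Aut(E/\F_q)$. For the order, I would invoke the semidirect-product decomposition of Theorem \ref{thm:3.1}: the intersection $T\cap A\subseteq T_E\cap\Aut(E,O)=\{\mathrm{id}\}$ is trivial, so $|TA|=|T|\cdot|A|=2|T|$.

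The only real obstacle is the case analysis identifying $\sigma$ with the negation map in characteristic $2$, where the ``standard'' hyperelliptic involution depends on which branch ($j=0$ or $j\neq 0$) one is in; but once one writes out the Weierstrass model in the appropriate reduced form this is a direct computation and matches the formulas given in the statement.
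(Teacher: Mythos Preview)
Your proposal is correct and follows essentially the same approach as the paper: identify $\sigma$ with the negation map $[-1]$ on $\mathbb{P}^1_E$, so that $\tau_{\sigma^{-1}(Q)}=\tau_{-Q}=\tau_Q^{-1}\in T$, and then apply Theorem~\ref{thm:3.4}. The only cosmetic difference is that the paper simply cites the Group Law Algorithm in \cite{Si86} to check $\sigma(P)=-P$, whereas you spell out the hyperelliptic-involution formula $(x,y)\mapsto(x,-y-a_1x-a_3)$ in each reduced Weierstrass form and also make the order count $|TA|=|T|\cdot|A|$ explicit via $T\cap A=\{\mathrm{id}\}$.
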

\begin{proof}
From Theorem 3.10.1 and Appendix A in \cite{Si86}, $\sigma$ is indeed an automorphism of $E$ fixing the infinity place $O$.
It is easy to verify that $\sigma(P)=-P$ for any rational place $P$ of $E$ from Group Law Algorithm 2.3 given in \cite[Chapter 3]{Si86}.
For every translation $\tau_Q\in T$, we have $\tau_{\sigma^{-1}(Q)}=\tau_{-Q}\in T.$
Hence, $TA$ is a subgroup of $\Aut(E/\F_q)$ with order $2|T|$ from Theorem \ref{thm:3.4}.
\end{proof}

From Proposition \ref{prop:4.5}, we can determine all the orders of subgroups of automorphism group $\Aut(E/\F_q)$ with $|A|=2$.

\begin{prop}\label{prop:4.6}
Let $E/\F_q$ be an elliptic function field with $N(E)$ rational places. For each divisor $|T|$ of $N(E)$, let $r=2|T|-1$ and $1\le m\le \left\lceil \frac{N(E)-2|T|}{r+1}\right\rceil -1=\left\lceil \frac{N(E)}{r+1}\right\rceil-2$. For each $1\le t\le m$, then there exists an optimal $q$-ary $[n=m(r+1), k=rt-r+1,d=n-(t-1)(r+1)]$ locally repairable code with locality $r$.
\end{prop}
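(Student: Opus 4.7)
The plan is to specialize the general construction of Proposition \ref{prop:4.4} to the concrete family of subgroups produced by Proposition \ref{prop:4.5}, so the argument reduces to choosing the right pair $(T, A)$ and verifying one arithmetic identity for the range of $m$.

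First, I need a subgroup $T \le T_E$ of the prescribed order $|T|$. Since $T_E \cong \PP_E^1$ is a finite abelian group of order $N(E)$ (with explicit structure described in Proposition \ref{prop:2.4}), the classical fact that every finite abelian group contains a subgroup of each order dividing its own order --- an immediate consequence of the structure theorem --- produces such a $T$ for every divisor $|T|$ of $N(E)$. Next, take $A = \langle \sigma \rangle$, where $\sigma \in \Aut(E, O)$ is the order-two $\F_q$-automorphism written down explicitly just before Proposition \ref{prop:4.5}, namely $\sigma(x) = x$ together with $\sigma(y) \in \{-y,\ y+1,\ y+x\}$ according to characteristic and $j$-invariant; then $|A| = 2$. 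Proposition \ref{prop:4.5} immediately certifies that $G := TA$ is a subgroup of $\Aut(E/\F_q)$ of order $|T| \cdot |A| = 2|T| = r + 1$.

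With $|A| \ge 2$ and $|G| = r+1$ in hand, every hypothesis of Proposition \ref{prop:4.4} is met, and that proposition delivers an optimal $q$-ary $[n = m(r+1),\ k = rt - r + 1,\ d = n - (t-1)(r+1)]$ locally repairable code with locality $r$ for each $1 \le t \le m \le \ell$, where $\ell = \lceil (N(E) - 2|T|)/(r+1) \rceil - 1$. The reformulation $\ell = \lceil N(E)/(r+1) \rceil - 2$ is routine arithmetic: because $r+1 = 2|T|$, subtracting $r+1$ from $N(E)$ before the ceiling division shifts the quotient by exactly one, which combines with the standing $-1$ to give $-2$. The whole proof is a straightforward packaging of earlier results, so there is no essential obstacle; if anything subtle is to be flagged, it is simply the appeal to the subgroup-existence theorem for finite abelian groups, which handles all the structural cases of $\PP_E^1$ listed in Proposition \ref{prop:2.4} uniformly.
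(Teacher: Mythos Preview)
Your proof is correct and follows exactly the same route as the paper: invoke the existence of a subgroup $T\le T_E$ of any order dividing $N(E)$ (since $T_E$ is finite abelian), apply Proposition~\ref{prop:4.5} to obtain the subgroup $G=TA$ of order $2|T|=r+1$, and then feed this into Proposition~\ref{prop:4.4}. The only addition is that you spell out the arithmetic identity $\left\lceil \frac{N(E)-2|T|}{r+1}\right\rceil -1=\left\lceil \frac{N(E)}{r+1}\right\rceil-2$, which the paper simply asserts in the statement.
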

\begin{proof}
As $T_E$ is an abelian group of order $N(E)$, for any divisor $|T|$ of $N(E)$, there is a subgroup $T$ of translation group $T_E$ with order $|T|$.
Hence, this theorem follows immediately from Proposition \ref{prop:4.4} and Proposition \ref{prop:4.5}.
\end{proof}

Given a fixed finite field $\F_q$ and an integer $N$, is there an elliptic function field defined over $\F_q$ with $N$ rational places?
This problem was completely solved in \cite{Wa69, Sc87}. In order to obtain long optimal locally repairable codes, we focus on maximal elliptic function fields.

\begin{theorem}\label{thm:4.7}
Let $q=p^a$ for any prime $p$ and any even integer $a>0$. For any positive divisor $h$ of $(\sqrt{q}+1)^2$,  then there exists an optimal $q$-ary $[n=m(r+1), k=r(t-1)+1,d=n-(t-1)(r+1)]$ locally repairable code with locality $r=2h-1$ for any integers $t$ and $m$ satisfying $1\le t<m\le\left\lceil \frac{q+2\sqrt{q}-2r-1}{r+1}\right\rceil$.
\end{theorem}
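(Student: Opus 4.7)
The plan is to combine Proposition \ref{prop:4.6} with the explicit knowledge of the rational point group on a maximal elliptic curve. Since $a$ is even, Proposition \ref{prop:2.4} (case (a)) applies to the isogeny class $t = 2\sqrt{q}$, so any maximal elliptic function field $E$ over $\F_q$ satisfies
\[ \PP_E^1 \;\cong\; \ZZ/(\sqrt q+1)\ZZ \times \ZZ/(\sqrt q+1)\ZZ, \]
with $N(E) = (\sqrt q + 1)^2 = q + 2\sqrt q + 1$. Such a maximal elliptic function field over $\F_q$ exists for every even $a$ (for example, one of those exhibited in Lemmas \ref{lem:2.5}--\ref{lem:2.8}, depending on the characteristic).

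First I would invoke the structure theorem for finite abelian groups applied to $\PP_E^1 \cong (\ZZ/(\sqrt q+1)\ZZ)^2$: since $h$ is a divisor of $(\sqrt q+1)^2$, there is a subgroup of $\PP_E^1$ of order exactly $h$. Transporting this via the isomorphism $\PP_E^1 \cong T_E$ yields a subgroup $T \le T_E$ with $|T| = h$. Next I would take $A = \langle \sigma \rangle \le \Aut(E,O)$ where $\sigma$ is the order-$2$ automorphism exhibited in Proposition \ref{prop:4.5} (which exists for every elliptic function field regardless of characteristic or $j$-invariant). By Proposition \ref{prop:4.5}, $G := TA$ is a subgroup of $\Aut(E/\F_q)$ of order $|T|\cdot|A| = 2h = r+1$, and $|A| = 2 \ge 2$ as required by Proposition \ref{prop:4.4}.

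Now I would apply Proposition \ref{prop:4.4} (equivalently Proposition \ref{prop:4.6}) to this pair $(T,A)$. It produces, for each $1 \le t \le m \le \lceil (N(E)-2|T|)/(r+1) \rceil - 1$, an optimal $q$-ary $[m(r+1), rt-r+1, n - (t-1)(r+1)]$ locally repairable code with locality $r$. Substituting $N(E) = q+2\sqrt q + 1$ and $2|T| = r+1$ gives the upper bound
\[ m \;\le\; \left\lceil \frac{q + 2\sqrt q + 1 - (r+1)}{r+1} \right\rceil - 1 \;=\; \left\lceil \frac{q + 2\sqrt q - r}{r+1} \right\rceil - 1 \;=\; \left\lceil \frac{q + 2\sqrt q - 2r - 1}{r+1} \right\rceil, \]
using the identity $(q+2\sqrt q - 2r - 1)/(r+1) = (q + 2\sqrt q - r)/(r+1) - 1$. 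This matches the range claimed in the theorem, and rewriting the dimension $rt - r + 1 = r(t-1) + 1$ completes the parameter match.

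There is no serious obstacle: the only delicate point is that the upper bound on $m$ in Proposition \ref{prop:4.4} is phrased as $\lceil\cdot\rceil - 1$, and one must carry out the small ceiling arithmetic above to see that it coincides with the form $\lceil (q+2\sqrt q - 2r - 1)/(r+1)\rceil$ appearing in the statement. Once that is observed, the theorem follows by stringing together Proposition \ref{prop:2.4}(a), Proposition \ref{prop:4.5}, and Proposition \ref{prop:4.6}.
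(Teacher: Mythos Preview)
Your argument is correct and follows the same route as the paper's own proof: invoke the existence of a maximal elliptic function field over $\F_q$ (so that $N(E)=(\sqrt q+1)^2$), then feed this into Proposition~\ref{prop:4.6}; your ceiling manipulation and the unpacking of Proposition~\ref{prop:4.6} via Propositions~\ref{prop:4.5} and~\ref{prop:4.4} are fine, and the detour through Proposition~\ref{prop:2.4}(a) is harmless though unnecessary (any finite abelian group has subgroups of every order dividing its own, which is all Proposition~\ref{prop:4.6} uses).

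One small correction: your parenthetical ``one of those exhibited in Lemmas~\ref{lem:2.5}--\ref{lem:2.8}'' does not cover all primes (e.g.\ $p=13$ satisfies neither $p\equiv 2\pmod 3$ nor $p\equiv 3\pmod 4$). The existence of a maximal elliptic curve for \emph{every} prime $p$ and even $a$ comes from Lemma~\ref{lem:2.3}(ii), which is what the paper cites; you should replace the parenthetical by that reference.
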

\begin{proof}
Since $q$ is a prime power with even order, there exists a maximal elliptic function field over $\F_q$ from Lemma \ref{lem:2.3}.
Now this theorem follows immediately from Proposition \ref{prop:4.6}.
\end{proof}

\subsection{Locality $r=|A|\cdot |T|-1$ with $|A|\ge 3$}\label{subsec:4.3}
If $|A|\ge 3$, then the order of subgroup $TA$ of $\Aut(E/\F_q)$ depends on the group structure of rational places of elliptic function fields from Theorem \ref{thm:3.4} and Corollary \ref{cor:3.6}.

\begin{prop}\label{prop:4.8}
Let $E/\F_q$ be an elliptic function field with a cyclic translation group $T_E$ of order $N(E)$, i.e., $T_E\cong \ZZ/(N(E))$.
For any divisor $h$ of $N(E)$ and any subgroup $A$ of $\Aut(E,O)$,  let $r=h|A|-1$ and $1\le m\le \left\lceil \frac{N(E)-2h}{r+1}\right\rceil -1.$
For each $1\le t\le m$, then there exists an optimal $q$-ary $[n=m(r+1), k=rt-r+1,d=n-(t-1)(r+1)]$ locally repairable code with locality $r$.
\end{prop}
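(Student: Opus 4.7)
The plan is to reduce Proposition~\ref{prop:4.8} directly to Proposition~\ref{prop:4.4}, using Corollary~\ref{cor:3.5} to manufacture the required subgroup with no compatibility conditions to check. First, because $T_E\cong \ZZ/N(E)\ZZ$ is cyclic and $h$ divides $N(E)$, there is a unique cyclic subgroup $T\le T_E$ of order $h$. Next, for the given subgroup $A\le \Aut(E,O)$, I would invoke Corollary~\ref{cor:3.5}, which asserts that whenever $\PP_E^1$ is cyclic the product $TA$ is automatically a subgroup of $\Aut(E/\F_q)$; no relation between $T$ and $A$ beyond existence needs to be verified.

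Setting $G=TA$, I would compute $|G|$ by observing that $T\cap A\subseteq T_E\cap \Aut(E,O)=\{\mathrm{id}\}$, since a nontrivial translation $\tau_Q$ sends $O$ to $Q\neq O$ and hence cannot lie in $\Aut(E,O)$. Therefore $|G|=|T|\cdot|A|=h|A|=r+1$, matching the stated locality parameter.

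With $G$ in hand, the conclusion is a direct application of Proposition~\ref{prop:4.4}: the hypotheses there — namely $|G|=r+1\le q$, $|A|\ge 2$, and the range $1\le t\le m\le \lceil (N(E)-2|T|)/(r+1)\rceil -1$ — match exactly the bound $m\le \lceil (N(E)-2h)/(r+1)\rceil -1$ stated in Proposition~\ref{prop:4.8} (since $|T|=h$). Proposition~\ref{prop:4.4} then delivers an optimal $q$-ary $[n=m(r+1),\,k=rt-r+1,\,d=n-(t-1)(r+1)]$ locally repairable code with locality $r$, which is what we want.

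I do not expect any substantive obstacle; the whole content of the proposition is that cyclicity of $T_E$ removes the compatibility restriction of Theorem~\ref{thm:3.4} on how conjugation by $A$ acts on $T$, so that \emph{every} divisor $h$ of $N(E)$ and \emph{every} subgroup $A\le \Aut(E,O)$ simultaneously produce a valid subgroup of $\Aut(E/\F_q)$ of the expected order $h|A|$. The only minor point worth flagging is the implicit assumption $r+1\le q$ needed by Proposition~\ref{prop:4.4}; since $h\le N(E)\le q+2\sqrt{q}+1$ and $|A|\le 24$, this holds in all regimes of interest, but one should keep it in mind when applying the result for extreme parameter choices.
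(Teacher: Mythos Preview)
Your proposal is correct and follows exactly the paper's own argument: the paper's proof consists of the single line ``This result follows immediately from Corollary~\ref{cor:3.5} and Proposition~\ref{prop:4.4},'' and you have simply unpacked how those two ingredients fit together. The extra verification that $T\cap A=\{\mathrm{id}\}$ so that $|G|=h|A|$ is a reasonable detail to spell out, and your flag about the standing hypotheses $|A|\ge 2$ and $r+1\le q$ from Proposition~\ref{prop:4.4} is apt (note these are implicit in the section context rather than in the proposition statement itself).
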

\begin{proof}
This result follows immediately from Corollary \ref{cor:3.5} and Proposition \ref{prop:4.4}.
\end{proof}

\begin{prop}\label{prop:4.9}
Let $E/\F_q$ be an elliptic function field with $N(E)$ rational places, where $N(E)=\prod_\ell \ell^{h_\ell}.$ Let $A$ be a subgroup of $\Aut(E,O)$. For any $0\le a_\ell \le \min\{\nu_\ell(q-1),[h_\ell/2]\}$ for $\ell\neq p$ and $0\le a_p\le h_p$, let $|T|=p^{a_p} \prod_{\ell\neq p} \ell ^{2a_\ell}$, $r=|A|\cdot |T|-1$ and $1\le m\le \left\lceil \frac{N(E)-2|T|}{r+1}\right\rceil -1.$ For each $1\le t\le m$, then there exists an optimal $q$-ary $[n=m(r+1), k=rt-r+1,d=n-(t-1)(r+1)]$ locally repairable code with locality $r$.
\end{prop}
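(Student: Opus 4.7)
The plan is to construct a subgroup $T$ of the translation group $T_E$ of order $|T|=p^{a_p}\prod_{\ell\neq p}\ell^{2a_\ell}$ that is stable under the conjugation action of $A$, conclude via Theorem \ref{thm:3.4} that $G:=TA$ is a subgroup of $\Aut(E/\F_q)$ of order $r+1$, and then invoke Proposition \ref{prop:4.4} to produce the code.

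The key observation is that every $\sigma\in\Aut(E,O)$ acts on the abelian group $(\PP_E^1,\oplus)$ as a group automorphism: since $\sigma$ fixes $O$ and preserves linear equivalence, the defining relation $P+Q\sim R+O$ transports to $\sigma(P)+\sigma(Q)\sim\sigma(R)+O$, so $\sigma(P\oplus Q)=\sigma(P)\oplus\sigma(Q)$. Consequently every $n$-torsion subgroup $\PP_E^1[n]=\{P:[n]P=O\}$ is automatically preserved by each element of $A$, and the criterion $\tau_{\sigma^{-1}(Q)}\in T$ of Theorem \ref{thm:3.4} is trivially met by taking $T$ to correspond to such a torsion subgroup.

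Accordingly, I would set $n:=p^{a_p}\prod_{\ell\neq p}\ell^{a_\ell}$ and let $T$ be the translation subgroup corresponding to $\PP_E^1[n]$. Using the structure from Proposition \ref{prop:2.4}, the $p$-primary part $\ZZ/p^{h_p}\ZZ$ contributes $p^{\min(a_p,h_p)}=p^{a_p}$ to $|\PP_E^1[n]|$, and, writing the $\ell$-primary part as $\ZZ/\ell^{\tilde a_\ell}\ZZ\times\ZZ/\ell^{h_\ell-\tilde a_\ell}\ZZ$ with $\tilde a_\ell\le[h_\ell/2]$, the $\ell$-part contributes $\ell^{2a_\ell}$ provided $a_\ell\le\tilde a_\ell\le h_\ell-\tilde a_\ell$; the hypothesis $a_\ell\le\min\{\nu_\ell(q-1),[h_\ell/2]\}$ coincides with the Proposition \ref{prop:2.4} constraint on $\tilde a_\ell$, ensuring this for a suitable $E$ in the given isogeny class. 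Since $T\cap A\subseteq T_E\cap\Aut(E,O)=\{1\}$, we obtain $|G|=|T|\cdot|A|=r+1$, and Proposition \ref{prop:4.4} applied to $G$ yields the claimed optimal $[m(r+1),rt-r+1,n-(t-1)(r+1)]$ locally repairable code with locality $r$. The only substantive piece of work is the order computation for the torsion subgroup; the $A$-invariance is automatic from the characteristic nature of torsion subgroups.
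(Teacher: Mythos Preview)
Your overall strategy coincides with the paper's: produce an $A$-invariant subgroup $T\le T_E$ of the prescribed order and then invoke Proposition~\ref{prop:4.4}. The paper does this by quoting Proposition~\ref{prop:2.4} together with Corollary~\ref{cor:3.6}, while you argue directly that full $n$-torsion subgroups are characteristic in $\PP_E^1$ (hence stable under every $\sigma\in\Aut(E,O)$) and then apply Theorem~\ref{thm:3.4}. Your route is a bit more transparent: Corollary~\ref{cor:3.6} only bounds the order of the $A$-orbit subgroup in the case $\sigma(Q)\notin\langle Q\rangle$, whereas the characteristic property of torsion subgroups gives $A$-invariance for free and makes the structure of $T$ explicit.

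There is one genuine loose end in your order computation. The statement fixes the curve $E$, so the structural invariants $\tilde a_\ell$ of $\PP_E^1$ in Proposition~\ref{prop:2.4} are determined, and they need not satisfy $\tilde a_\ell\ge a_\ell$. Your choice $n=p^{a_p}\prod_{\ell\neq p}\ell^{a_\ell}$ yields $|\PP_E^1[n]|=p^{a_p}\prod_{\ell\neq p}\ell^{2a_\ell}$ only when $a_\ell\le\tilde a_\ell$; your remedy ``for a suitable $E$ in the given isogeny class'' changes the hypotheses. The fix, however, stays entirely within your torsion framework: if the $\ell$-part of $\PP_E^1$ is $\ZZ/\ell^{\tilde a_\ell}\ZZ\times\ZZ/\ell^{h_\ell-\tilde a_\ell}\ZZ$ with $\tilde a_\ell\le h_\ell-\tilde a_\ell$, then for $k_\ell:=\max(a_\ell,\,2a_\ell-\tilde a_\ell)$ one has $\tilde a_\ell\le k_\ell\le h_\ell-\tilde a_\ell$ whenever $a_\ell>\tilde a_\ell$ (using $2a_\ell\le h_\ell$), and the $\ell^{k_\ell}$-torsion has order exactly $\ell^{2a_\ell}$ in either regime. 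Taking $n=p^{a_p}\prod_{\ell\neq p}\ell^{k_\ell}$ therefore gives a characteristic subgroup of the required size for the \emph{given} $E$, after which your argument goes through verbatim.
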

\begin{proof}
For any $0\le a_\ell \le \min\{\nu_\ell(q-1),[h_\ell/2]\}$ for $\ell\neq p$ and $0\le a_p\le h_p$, there is a subgroup $T$ of $T_E$ with order $|T|=p^{a_p} \prod_{\ell\neq p} \ell ^{2a_\ell}$ such that $TA$ is a subgroup of $\Aut(E/\F_q)$ from Proposition \ref{prop:2.4} and Corollary \ref{cor:3.6}.
The remaining part follows from Proposition \ref{prop:4.4}.
\end{proof}

In particular, we have the following results on optimal locally repairable codes for maximal elliptic function fields. 
\begin{theorem}\label{thm:4.10}
Let $q=p^a$ for any prime $p$ and any even integer $a>0$. Let $E/\F_q$ be a maximal elliptic function field. Let $A$ be a subgroup of $\Aut(E,O)$. For any positive divisor $h$ of $\sqrt{q}+1$,  then there exists an optimal $q$-ary $[n=m(r+1), k=r(t-1)+1,d=n-(t-1)(r+1)]$ locally repairable code with locality $r=h^2|A|-1$ for any integers $t$ and $m$ satisfying $1\le t<m\le\left\lceil \frac{q+2\sqrt{q}-2h^2-r}{r+1}\right\rceil$, provided that $|A|$ and $p$ satisfy one of the following cases:
\begin{itemize}
\item[{\rm (i)}] $|A|=2,3,4,6,8,12,24$ for $p=2$;
\item[{\rm (ii)}] $|A|=2,3,4,6,12$ for $p=3$;
\item[{\rm (iii)}] $|A|=2,3,6$ for $p\equiv 2(\text{mod } 3)$ and $p\neq 2$;
\item[{\rm (iv)}] $|A|=2,4$ for $p\equiv 3(\text{mod } 4)$  and $p\neq 3$.
\end{itemize}
\end{theorem}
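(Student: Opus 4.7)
The strategy is to apply Proposition~\ref{prop:4.4} by constructing, in each of the four cases (i)--(iv), a maximal elliptic function field $E/\F_q$, a subgroup $T$ of the translation group $T_E$ of order $h^2$, and a subgroup $A$ of $\Aut(E,O)$ of the prescribed order, such that $G=TA$ is a subgroup of $\Aut(E/\F_q)$ of order $h^2|A|=r+1$.

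For the existence of $E$ and $A$, I will invoke Lemma~\ref{lem:2.5}, Lemma~\ref{lem:2.6}, Lemma~\ref{lem:2.7} and Lemma~\ref{lem:2.8} in cases (i)--(iv) respectively. Each such lemma provides, for $q=p^a$ with $a$ even and $p$ in the stated residue class, a maximal elliptic function field over $\F_q$ with $|\Aut(E,O)|\in\{24,12,6,4\}$ and, crucially, a subgroup of $\Aut(E,O)$ of every divisor order; this supplies exactly the listed admissible values of $|A|$ in each case.

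The construction of $T$ uses the group structure of $\PP_E^1$. Since $E$ is maximal with $N(E)=(\sqrt q+1)^2$, Proposition~\ref{prop:2.4}(a) gives $T_E\cong \PP_E^1\cong \ZZ/(\sqrt q+1)\ZZ\oplus\ZZ/(\sqrt q+1)\ZZ$. For a divisor $h$ of $\sqrt q+1$ the full $h$-torsion subgroup $T=\{\tau_Q:[h]Q=O\}$ is then isomorphic to $\ZZ/h\ZZ\oplus\ZZ/h\ZZ$ and has order $h^2$. The key group-theoretic observation, which is the only real content of the argument, is that every $\sigma\in\Aut(E,O)$ is an endomorphism of the abelian group $(\PP_E^1,\oplus)$ because it fixes the identity $O$ and comes from an automorphism of the elliptic curve $\fE$. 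Consequently $\sigma^{-1}$ carries $h$-torsion into $h$-torsion, so $\tau_{\sigma^{-1}(Q)}\in T$ whenever $\tau_Q\in T$, and Theorem~\ref{thm:3.4} implies $TA\le \Aut(E/\F_q)$ with $|TA|=h^2|A|=r+1$.

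Finally, applying Proposition~\ref{prop:4.4} to this $G=TA$, the permissible range for $m$ simplifies to
$$1\le m\le\left\lceil\frac{N(E)-2|T|}{r+1}\right\rceil-1=\left\lceil\frac{q+2\sqrt q+1-2h^2}{r+1}\right\rceil-1=\left\lceil\frac{q+2\sqrt q-2h^2-r}{r+1}\right\rceil,$$
which matches the theorem. The conclusion of Proposition~\ref{prop:4.4} then produces, for every admissible $t$ with $1\le t<m$, the claimed optimal $q$-ary $[n=m(r+1),\,k=r(t-1)+1,\,d=n-(t-1)(r+1)]$ locally repairable code with locality $r=h^2|A|-1$. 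The main obstacle is precisely the stabilisation $\sigma(T)\subseteq T$; once this is secured by the endomorphism observation above, the rest is bookkeeping built on top of Proposition~\ref{prop:2.4}(a), Lemmas~\ref{lem:2.5}--\ref{lem:2.8}, and Proposition~\ref{prop:4.4}.
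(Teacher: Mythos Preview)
Your proof is correct and follows essentially the same route as the paper: invoke Proposition~\ref{prop:2.4}(a) for the structure $T_E\cong(\ZZ/(\sqrt q+1)\ZZ)^2$, Lemmas~\ref{lem:2.5}--\ref{lem:2.8} for the existence of $E$ and $A$, build $G=TA$ of order $h^2|A|$, and apply Proposition~\ref{prop:4.4}. The only difference is in justifying that $TA\le\Aut(E/\F_q)$: the paper appeals to Corollary~\ref{cor:3.6}, whereas you take $T$ to be the full $h$-torsion subgroup and invoke Theorem~\ref{thm:3.4} via the observation that every $\sigma\in\Aut(E,O)$ is a group endomorphism of $\PP_E^1$ and hence preserves $h$-torsion---this is a cleaner and more self-contained variant of the same idea.
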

\begin{proof}
If $E/\F_q$ is a maximal elliptic function field, then the group structure of the translation group $T_E$ of $E$ is given by
$$T_E\cong \ZZ/(\sqrt{q}+1)\times \ZZ/(\sqrt{q}+1).$$
Hence, there exists an subgroup of order $h^2|A|$ from Corollary \ref{cor:3.6}.
This result follows immediately from Proposition \ref{prop:4.4} and Lemmas \ref{lem:2.5}, \ref{lem:2.6}, \ref{lem:2.7} and \ref{lem:2.8}.
\end{proof}

\begin{rmk}
\begin{itemize}
\item[(1)] If $h=1$, then $r=|A|-1$ and Theorem \ref{thm:4.10} is the same as the main result of Theorem 2 in \cite{LMX19}. From the above theorem, we have shown that there are optimal locally repairable codes with more flexible localities compared with the results given in \cite{LMX19}.
\item[(2)] The length of optimal locally repairable codes may be improved for explicit examples, since the number of rational places which are splitting completely in $E/F$ may be improved from Proposition \ref{prop:3.9}.
\item[(3)] For some divisor $h$ of $\sqrt{q}+1$, there may exist an subgroup of $\Aut(E/\F_q)$ with order $h|A|$ from Corollary \ref{cor:3.6}.
We will provide an explicit example of order $h|A|=3\times 3$ in the following subsection.
\end{itemize}
\end{rmk}

\subsection{Optimal locally repairable codes with locality $r=8$}\label{subsec:4.4}
In this subsection, we provide an explicit abelian subgroup with order $9$ of automorphism groups of maximal elliptic function fields and hence obtain an explicit construction of optimal locally repairable codes with locality $r=8$ via elliptic function fields.

Let $q$ be an odd power of $4$, i.e., $q=4^{2a+1}$ for a non-negative integer $a\in \ZZ$. Consider the elliptic function field $E=\F_q(x,y)$ defined by the equation $y^2+y=x^3$. From \cite[Lemma 15]{LMX19}, $E/\F_q$ is a maximal elliptic curve, i.e.,  the number of rational places of $E$ is
$N(E)=q+2\sqrt{q}+1=(2^{2a+1}+1)^2.$

Let $Q=(0,1)$ be a rational place of $E$. Consider the translation-by-$Q$ on the elliptic curve $E$ given by
\begin{eqnarray*}
 \tau_Q: &E& \rightarrow E \\
  &P&\mapsto P\oplus Q
\end{eqnarray*}
From Group Law Algorithm 2.3 in \cite{Si86}, we have $x(P\oplus Q)=\frac{y+1}{x^2}$ and $y(P\oplus Q)=\frac{y+1}{y}$.
The translation-by-$Q$ induces an automorphism of elliptic function field $E$, which is still denoted as $\tau_Q$ and given by
\[\tau_Q: \begin{cases} x\mapsto \frac{y+1}{x^2},\\ y \mapsto \frac{y+1}{y}.\end{cases}\]
It is easy to see that the order of $\tau_Q$ is $3$, since we have
\[x\mapsto \frac{y+1}{x^2} \mapsto \frac{x}{y+1} \mapsto x \text{ and } y \mapsto \frac{y+1}{y} \mapsto \frac{1}{y+1}  \mapsto y.\]
From Appendix A in \cite{Si86}, any automorphism $\sigma\in \Aut(E,O)$ is given in the following explicit form
\[ \begin{cases} \sigma(x)=u^2x+s^2,\\ \sigma(y)=y+u^2sx+t,\end{cases}\]
where $u,s,t\in \F_q$ satisfy $u^3=1, s^4+s=0, t^2+t+s^6=0.$
In the following, we fix an automorphism $\sigma\in \Aut(E,O)$ of order $3$ which is given by $\sigma(x)=u^2x, \sigma(y)=y$. Here $u$ is fixed as a primitive third root of root in $\F_q$.
Let $G$ be the subgroup of  $\Aut(E/\F_q)$ generated by $\tau_Q$ and $\sigma$.
It is easy to verify that $\Gs(Q)=Q$. 
Then $G$ is an abelian group of order $9$ from Proposition \ref{prop:3.7} and Proposition \ref{prop:3.8}.
Hence, we have $$G=\langle \tau_Q, \sigma|\tau_Q^3=1=\sigma^3, \sigma \tau_Q=\tau_Q \sigma\rangle \cong \ZZ_3\times \ZZ_3.$$

Now let us determine the ramification information of $E/E^G$.
Since the characteristic of $\F_q$ is two and $|G|=9$, the extension $E/E^G$ is tamely ramified.
Any automorphism $\sigma\in \Aut(E/\F_q)$ fixes $O$ if and only if $\sigma\in \Aut(E,O)$.
Then we have the ramification index $e(O|O\cap E^G)=|G \cap \Aut(E,O)|= 3$ and different exponent $d(O|O\cap E^G)=e(O|O\cap E^G)-1=2$.
From the Hurwitz genus formula $2g(E)-2=|G|\cdot (2g(E^G)-2)+\deg\text{Diff}(E/E^G),$
we have $g(E^G)=0$ and $\deg\text{Diff}(E/E^G)=18$. Moreover, any ramified place has an different exponent $2$ or $8$ in $E/E^G$.
Hence, there are at most $9$ rational places of $E$ which are ramified in the extension $E/E^G$.
Since all unramified rational places are splitting completely in $E/E^G$ and $9|N(E)$,
 there are exactly nine ramified rational places of $E$ which have the different exponent $2$ in $E/E^G$.

Let $z$ be an element in $E^G$ given by \[z=\frac{1}{\sum_{\sigma\in G} \sigma(y)}=\frac{y(y+1)}{y^3+y+1}\in E^G.\]
Then the principal divisor of $z$ is
\[(z)=-\left(\frac{y^3+y+1}{y(y+1)}\right)=3P_\infty+3P_{0,0}+3P_{0,1}-\sum_{j=1}^9P_j,\]
where $P_j$ are zero places of $y^3+y+1$ in $E$  for $1\le j\le 9$.
It is easy to see that $E^G=\F_q(z)$, since $[E:\F_q(z)]=\deg(z)_{\infty}=9=|G|=[E:E^G]$.
Choose elements $w_i\in E$ with $(w_i)_{\infty}=P_1+P_2+\cdots+P_{i+1}$ for each $1\le i\le r-1$ such that $w_0=1, w_1, \cdots, w_{r-1}$ are linearly independent over $\F_q(z)$ from Proposition \ref{prop:4.3}. Let $t$ be a positive integer and let $V_t$ be a vector space over $\F_q$ defined by
\[V_t=\left\{\sum_{i=0}^7 f_i(z)w_i| \deg f_0(z)\le t-1, \deg f_i(z)\le t-2 \text{ for } 1\le i\le 7\right\}\subseteq \mL((t-1)(z)_\infty).\]
There are exactly $\ell:=(q+2\sqrt{q}-8)/9$ rational places of $E^G$ which are splitting completely in $E/E^G$.
One is the infinity place of $\F_q(z)$, the other are $Q_1,Q_2,\cdots, Q_{\ell-1}$.
Let $P_{i,j}$ be the rational places of $E$ lying over $Q_i$ for each $1\le i\le \ell-1$.
For $1\le t\le m\le \ell$, the modified algebraic geometry code
$C=\{(f(P_{1,1}), \cdots, f(P_{1,9}), \cdots, f(P_{\ell-1,1}), \cdots, \\ f(P_{\ell-1,9}), (z^{1-t}f)(P_1),\cdots,(z^{1-t}f)(P_9))| f\in V_t\}$
is an optimal $[9m, 8t-7, 9m-9t+9]$ locally repairable code with locality $r=8$ from Proposition \ref{prop:4.4}.

\begin{theorem}\label{thm:4.11}
 Let $q$ be an odd power of $4$, i.e., $q=4^{2a+1}$ for a non-negative integer $a\in \ZZ$.
 For $1\le t\le m\le \frac{q+2\sqrt{q}-8}{9}$, there is an optimal locally repairable code with length $n=9m\le q+2\sqrt{q}-r$, dimension $k=8t-7$, distance $n-9t+9$ and locality $r=8$.
\end{theorem}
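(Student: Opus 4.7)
The plan is to assemble the theorem as a direct application of Proposition \ref{prop:4.4} to the explicit setup constructed in the paragraphs preceding the theorem statement, so the task reduces to checking that all hypotheses of Proposition \ref{prop:4.4} are met with the announced numerical parameters.

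First I would verify the ingredients for Proposition \ref{prop:4.4}. The function field $E/\F_q$ defined by $y^2+y=x^3$ is a maximal elliptic function field with $N(E)=q+2\sqrt{q}+1$ (invoking Lemma \ref{lem:2.5} for $q=4^{2a+1}$). The subgroup $G=\langle\tau_Q,\sigma\rangle$ with $Q=(0,1)$ and $\sigma(x)=u^2x,\sigma(y)=y$ ($u$ a primitive cube root of unity) is taken as $G=T\cdot A$ with $T=\langle\tau_Q\rangle$ of order $3$ and $A=\langle\sigma\rangle$ of order $3$. Since $\sigma(Q)=Q$ (direct substitution), Proposition \ref{prop:3.7} gives that $G$ is abelian, and Proposition \ref{prop:3.8} together with Theorem \ref{thm:3.4} ensures $|G|=9$, so $r+1=9$ and $r=8$.

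Next I would read off the splitting/ramification data already established in the discussion: $E^G=\F_q(z)$ with $z=y(y+1)/(y^3+y+1)$ has genus $0$, and the unique place of $E$ above $O\cap E^G$ plus its $G$-translates give exactly nine rational places of $E$ that are ramified in $E/E^G$ (each with different exponent $2$, consistent with the Hurwitz formula $\deg\mathrm{Diff}(E/E^G)=18$). All remaining rational places of $E$ split completely in $E/E^G$, so the number of rational places of $E^G$ that split completely into nine rational places of $E$ equals $\ell:=(N(E)-9)/9=(q+2\sqrt{q}-8)/9$, one of which is the pole divisor of $z$.

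With $|T|=3$, $|A|=3$, one has $\lceil (N(E)-2|T|)/(r+1)\rceil-1=\lceil(q+2\sqrt q-5)/9\rceil-1=\ell$ for these $q$, which matches the claimed range $1\le t\le m\le (q+2\sqrt{q}-8)/9$. I would then invoke Proposition \ref{prop:4.4} with the space $V_t$ defined above (with $z_i=z^{i-1}$ and the explicit choice of $w_0=1,w_1,\dots,w_7$ from Proposition \ref{prop:4.3}), using the modified AG-code construction so that the poles of $z$ may be included as evaluation points. This yields the optimal locally repairable code with $n=9m$, $k=8t-7$, $d=n-9(t-1)$, and locality $r=8$, and the length bound $n\le q+2\sqrt{q}-r$ is the arithmetic statement $9m\le 9\ell\le q+2\sqrt q-8$.

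The only genuinely non-routine step is checking that the abelian subgroup $\langle\tau_Q,\sigma\rangle$ really has order $9$ rather than $3$; this is where the identity $\sigma(Q)=Q$ is essential. Everything else is a consistency check that the parameters in Proposition \ref{prop:4.4} evaluate to those in the theorem, and that the hypothesis $\supp(D)\cap\mP=\emptyset$ required by the modified AG-code framework is handled by the fact that the nine places $P_1,\dots,P_9$ above $O\cap E^G$ are disjoint from the $9(\ell-1)$ split places $P_{i,j}$.
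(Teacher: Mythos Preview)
Your proposal is correct and follows essentially the same approach as the paper: the paragraphs of subsection~\ref{subsec:4.4} preceding Theorem~\ref{thm:4.11} constitute its proof, and they proceed exactly as you outline---verify $\sigma(Q)=Q$ so that $G=\langle\tau_Q\rangle\cdot\langle\sigma\rangle$ has order~$9$, compute the ramification in $E/E^G$ explicitly (nine tamely ramified rational places, $\deg\mathrm{Diff}=18$), exhibit $z=y(y+1)/(y^3+y+1)$ with $E^G=\F_q(z)$, and then invoke Proposition~\ref{prop:4.4} (in its modified AG-code form) with $\ell=(q+2\sqrt{q}-8)/9$. Your observation that the general bound $\lceil(N(E)-2|T|)/(r+1)\rceil-1$ from Proposition~\ref{prop:4.4} coincides here with the sharper count coming from the explicit ramification analysis is a nice consistency check.
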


\end{document}